\theoremstyle{plain}
\newtheorem{Theorem}{Thm}[section]
\newtheorem{thm}[Theorem]{Theorem}
\newtheorem{lem}[Theorem]{Lemma}
\newtheorem{cor}[Theorem]{Corollary}
\newtheorem*{cor*}{Corollary}
\newtheorem{prop}[Theorem]{Proposition}
\newtheorem{conj}[Theorem]{Conjecture}
\newtheorem{ques}[Theorem]{Question}
\newtheorem{pr}[Theorem]{Problem}
\newtheorem*{thm*}{Theorem}
\newtheorem*{prop*}{Proposition}
\newtheorem*{lem*}{Lemma}
\newtheorem*{rem*}{Remark}
\theoremstyle{definition}
\newtheorem{defi}[Theorem]{Definition}
\newtheorem*{defi*}{Definition}
\newtheorem{exm}[Theorem]{Example}
\newtheorem{ex}[Theorem]{Example}
\newtheorem{rem}[Theorem]{Remark}
\newtheorem{alg}[Theorem]{Algorithm}
\newcommand\C{\mathbb{C}}
\newcommand\N{\mathbb{N}}
\newcommand\R{\mathbb{R}}
\DeclareMathOperator\rk{rk}
\DeclareMathOperator\gcr{gcr}
\DeclareMathOperator\sgcr{sgcr}
\DeclareMathOperator\mtr{mtr}
\DeclareMathOperator\M{M}
\DeclareMathOperator\cl{cl}
\DeclareMathOperator\dd{d}
\newcommand{\rr}{\mathbb{R}}
\newcommand{\cc}{\mathbb{C}}
\newcommand{\kk}{\mathbb{K}}
\newcommand{\bfv}{\mathbf{v}}
\newcommand{\bfx}{\mathbf{x}}
\newcommand{\cals}{\mathcal{S}}
\newcommand{\sym}{\cals}
\tikzstyle{vertex}=[circle, draw, inner sep=0pt, minimum size=6pt, fill=black]
\newcommand{\vertex}{\node[vertex]}
\begin{document}

\title{Typical and  Generic Ranks in Matrix Completion}
\author{Daniel Irving Bernstein}
\author{Grigoriy Blekherman}
\author{Rainer Sinn}

\subjclass[2010]{Primary: 05C50, 14P05, 15A83}
\keywords{matrix completion, typical rank, generic rank}

\begin{abstract}
We consider the problem of exact low-rank matrix completion from a geometric viewpoint: given a partially filled matrix $M$, we keep the positions of specified and unspecified entries fixed, and study the minimal completion rank.
If the entries of the matrix are complex
and the known entries are chosen randomly according to a continuous distribution,
then for a fixed pattern of locations of specified and unspecified entries,
there is a unique minimum completion rank which occurs with probability one.
We call this rank the generic completion rank.
Over the real numbers there can be multiple ranks that occur with positive probability;
we call them typical completion ranks.
We introduce these notions formally, and provide a number of inequalities and exact results on typical and generic ranks for different families of patterns of known and unknown entries.
\end{abstract}

\maketitle

\section{Introduction}
The problem of low-rank matrix completion received a tremendous amount of attention recently \cite{CRMR2565240,CTMR2723472,RFPMR2680543,KMOMR2683452,KTTMR3417786}, especially as far as efficient algorithms are concerned.
Applications that have driven much of the research in this area include
collaborative filtering \cite{goldberg1992using},
global positioning,
\cite{singer2008remark,biswas2006semidefinite,singer2010uniqueness},
and the structure-from-motion problem in computer vision
\cite{tomasi1992shape,chen2004recovering}.

We study the problem of \emph{exact} low-rank matrix completion for \emph{generic data}.
Concretely, we start with a partially-filled $m\times n$ matrix $M$, with real or complex entries, with the goal of finding the unspecified entries (completing $M$) in such a way that the completed matrix has the lowest possible rank, called the \emph{completion rank} of $M$. We study how the completion rank depends on the known entries, while keeping the locations of specified and unspecified entries fixed. Generic data means that we only consider partial fillings of $M$ where a small perturbation of the entries does not change the completion rank of $M$.
It is well known that in case of complex entries, outside of a lower dimensional set, all partially-filled $m\times n$ matrices with the same locations of known and unknown entries have the same completion rank (see \cite{KTTMR3417786}), which we call the \emph{generic (completion) rank} for this given pattern. If we restrict the entries of the partially-filled matrix and its completions to real numbers, the situation becomes more complicated: there can be several full-dimensional semi-algebraic subsets in the real vector space of partially-filled matrices on which the completion ranks are different. In analogy with tensor rank, we call such ranks \textit{typical (completion) ranks}. In this paper, we present fundamental results about generic and typical ranks, provide first techniques to study these notions, and present case studies of generic, typical, and maximal completion ranks for various families of patterns. 

We encode the locations of specified and unspecified entries in a partially-filled $m \times n$ matrix by a bipartite graph $G$, with parts of size $m$ and $n$, corresponding to rows and columns, such that $(i,j)$ is an edge of $G$ if the entry $(i,j)$ is specified.
Similarly, locations of specified and unspecified entries in a partially-filled $n\times n$ \emph{symmetric}
matrix can be encoded by a semisimple graph (that is loops, but no multiple edges, allowed)
where $\{i,j\}$ is an edge of $G$ if the $(i,j)$ and $(j,i)$ entries are specified.
We will refer to the set of matrices with the pattern of specified and unspecified entries given by $G$ as $\mathcal{M}_G$. The number of known entries of $M$ is $|E|$, the number of edges of $G$.
The $G$-partial matrices form a vector space of dimension $|E|$.

In many applications one assumes that a given partial matrix is either exactly or approximately completable to a matrix of rank significantly below the generic completion rank, and the main question is finding this completion algorithmically. A popular approach is to relax the non-convex rank minimization problem into a convex problem of minimizing the \textit{nuclear norm} of a completion \cite{MR2680543}. In the present paper we only consider exact completion, and we do not treat completion to a rank below the generic rank, since this occurs on a low-dimensional subset of the vector space of $G$-partially filled matrices.  


We now present a brief summary of the literature on generic and typical completion ranks: It is relatively easy to show that the generic completion rank of $G$ is $1$ if and only if $G$ is a tree \cite{hadwin2006rank,singer2010uniqueness}.
In this case, $1$ is also the unique typical completion rank of $G$.
In \cite{Be}, graphs with generic completion rank $2$ were classified using techniques of tropical geometry.
Generic ranks were also examined by Kalai, Nevo, and Novik \cite{kalai2016bipartite}
under the name of \textit{bipartite rigidity}.
However, not much is known beyond generic rank $2$,
and typical completion ranks have not been examined.
A related property of bipartite graphs called rank determinacy was studied in \cite{CJRWMR1038313,woerdeman1987lower}
(also see \cite{laurent2001} for a survey on matrix completion problems).
For the symmetric low-rank completion problem,
Uhler showed that generic completion rank can be used to certify existence of the maximum likelihood
estimator of a Gaussian graphical model \cite[Theorem 3.3]{uhler2012geometry}.
Bounds on symmetric generic completion rank were further analyzed in \cite{GSMR3706762} and \cite{BS}.\\

\subsection{Main Results in Detail}

In Section~\ref{sec:foundations} we prove some elementary but foundational results on generic and typical completion ranks. 
First we show that in the case of complex entries the generic completion rank exists.
This was previously observed in \cite{KTTMR3417786}, and we provide an elementary proof in Proposition~\ref{prop:genericrank}.
We sketch a simple algorithm that determines the generic completion rank of a bipartite graph with probability one
(see Algorithm \ref{alg:gcr}). We then prove a simple but important result on the behavior of typical and generic ranks:

\begin{prop*}[Proposition \ref{prop:typicalRanksInterval}]
Let $G$ be a bipartite graph. The minimal typical rank of $G$ is equal to the generic completion rank of $G$. Furthermore, all ranks between the minimal typical rank and the maximal typical rank of $G$ are typical.
\end{prop*}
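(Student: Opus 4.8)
The plan is to prove the two assertions separately: the equality of minimal typical rank and generic rank by a parametrization-plus-submersion argument, and the interval property by a connectedness argument along generic segments. Throughout set $N=|E|$ and identify $\mathcal{M}_G$ with $\R^N$ (or $\C^N$), write $\V_r$ for the affine variety of $m\times n$ matrices of rank at most $r$, and write $\pi_G$ for the coordinate projection recording the specified entries, so that the completion rank of $M$ is at most $r$ exactly when $M\in\pi_G(\V_r)$.

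For the first assertion, recall from Proposition~\ref{prop:genericrank} that $g:=\gcr(G)$ is the least $r$ for which $\pi_G(\V_r)$ is Zariski dense in $\C^N$. For the lower bound, if $r<g$ then $\overline{\pi_G(\V_r)}$ is a proper subvariety $Z$ of $\C^N$ cut out by polynomials with rational coefficients; every real partial matrix of completion rank at most $r$ lies in $Z(\R)$, a real algebraic set of dimension less than $N$ and hence of Lebesgue measure zero, so no rank below $g$ is typical. For the upper bound I would parametrize rank-at-most-$g$ matrices by $\Phi(P,Q)=PQ$ with $P\in\C^{m\times g}$ and $Q\in\C^{g\times n}$, and consider $\psi=\pi_G\circ\Phi$. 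Density of $\pi_G(\V_g)$ means $\psi$ is dominant, so its Jacobian attains full rank $N$ at a generic point; the corresponding maximal minor is a polynomial with real coefficients that does not vanish identically, hence is nonzero at some real $(P_0,Q_0)$. At that point $d\psi$ is surjective onto $\R^N$, so by the submersion theorem $\psi$ carries a neighborhood onto a Euclidean ball $B\subseteq\R^N$ of partial matrices admitting a real completion of rank at most $g$. Deleting the measure-zero set where the completion rank is at most $g-1$ leaves a full-dimensional subset of $B$ on which the completion rank equals $g$; thus $g$ is typical and, by the lower bound, minimal.

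For the second assertion it suffices to show the typical ranks have no gaps between $g$ and the maximal typical rank $r_{\max}$. Each $S_r:=\{M:\operatorname{cr}(M)\le r\}$ is semialgebraic by Tarski--Seidenberg applied to $\pi_G(\V_r(\R))$, so the strata $T_r=S_r\setminus S_{r-1}$ form a finite semialgebraic partition of $\R^N$; let $\Sigma$ be the complement of the union of the interiors of the $T_r$, a semialgebraic set of codimension at least one, and note that $\operatorname{cr}$ is constant on each connected component of $\R^N\setminus\Sigma$, taking exactly the typical values. I would join a point of $\R^N\setminus\Sigma$ of completion rank $g$ to one of completion rank $r_{\max}$ by a generic segment $\gamma$, which meets $\Sigma$ only in finitely many points lying in its smooth codimension-one part. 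The function $t\mapsto\operatorname{cr}(\gamma(t))$ is then piecewise constant with finitely many jumps, starting at $g$ and ending at $r_{\max}$.

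The main obstacle, and the technical heart of the argument, is the unit-jump lemma: across a generic wall point of $\Sigma$ the completion rank changes by exactly one. Granting it, the discrete intermediate value theorem forces $t\mapsto\operatorname{cr}(\gamma(t))$ to attain every integer in $[g,r_{\max}]$ on a subinterval of positive length; sweeping $\gamma$ through a full-dimensional family of parallel generic segments and applying a Fubini-type argument upgrades each intermediate value to a full-dimensional stratum, so that every rank between $g$ and $r_{\max}$ is typical. To establish the unit-jump lemma I would work with the local structure of $\pi_G$ restricted to the determinantal variety near a wall point $w$: approaching $w$ from the lower-rank side by matrices $M_s$ with completions $N_s$ of rank $a$, either the $N_s$ stay bounded, giving a completion of $w$ of rank at most $a$ and bounding the jump by a Schur-complement (bordering) estimate, or they escape to infinity, in which case the same bordering analysis shows the limiting completion rank increases by exactly one. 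Controlling this dichotomy uniformly is the delicate point I expect to require the most care.
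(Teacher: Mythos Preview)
Your treatment of the first assertion is correct and essentially the same as the paper's, which simply invokes the equality of real and complex dimensions for the image of $\pi_G$; your submersion argument makes that step explicit.

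For the second assertion there is a genuine gap. The entire argument rests on the unit-jump lemma, which you do not prove. Your sketch---a dichotomy between bounded and unbounded sequences of rank-$a$ completions, with a bordering/Schur-complement estimate in each case---does not establish that the jump is exactly one. In the unbounded case especially, there is no reason the limiting situation should be a rank-one perturbation of anything, so the bordering estimate has no purchase. More seriously, the unit-jump lemma along a generic segment is close to being equivalent to the statement you are trying to prove: if some intermediate rank $r+1$ were \emph{not} typical, then a generic segment from the rank-$r$ region to the rank-$(r+2)$ region would have to cross a wall where the completion rank jumps by two, so assuming unit jumps at generic walls is essentially assuming the conclusion.

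The paper's argument avoids wall-crossing entirely and is much shorter. Suppose $r_1\le r<r_2$ and $r+1$ is not typical. Then $\pi_G(\M^{m\times n}_{r+1}(\R))\setminus\pi_G(\M^{m\times n}_r(\R))$ has empty interior, so a \emph{generic} real matrix $A$ of rank $r+1$ satisfies $\pi_G(A)=\pi_G(A')$ for some real $A'$ of rank $r$. Now write a generic real matrix of rank $r+s$ as $A_1+A_2$ with $A_1$ generic of rank $r+s-1$ and $A_2$ generic of rank $1$; by induction on $s$, every generic rank-$(r+s)$ matrix has the same projection as some rank-$r$ matrix, so no rank above $r$ is typical---contradicting that $r_2$ is. The key idea you are missing is to work upstairs with the additive structure of matrices (splitting off a rank-one summand) rather than downstairs with the stratification of $\R^E$.
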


See \cite{Bernardi2018} and \cite{BTMR3368091} for the analogous results for the rank with respect to a variety. We also prove an interesting inequality on the maximal typical and generic ranks:

\begin{thm*}[Theorem \ref{thm:maxTypicalBound}]
Let $G$ be a bipartite graph with generic completion rank $r$. Then the maximal typical completion rank of $G$ is at most $2r-1$.
\end{thm*}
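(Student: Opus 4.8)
The plan is to prove the bound in two stages: first re-derive the easy bound $2r$ by a complex-conjugation argument, and then shave off one unit by producing a single \emph{real} rank-one summand. Throughout, let $\pi\colon\mat_{m\times n}\to\R^{|E|}$ (and its complexification) denote the linear projection onto the specified entries, so that the completion rank of $M\in\calm_G$ is the least $k$ with $M\in\pi(\{\rank\le k\})$. Since $\pi$ is linear and a matrix has rank at most $k$ exactly when it is a sum of $k$ rank-one matrices, the completion rank is subadditive and is invariant under multiplication by any nonzero scalar. Now let $M$ be a real $G$-partial matrix admitting a complex completion $A$ with $\rank A=r$ (for generic $M$ this holds because the generic completion rank is $r$). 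Then $\overline A$ completes $\overline M=M$ as well, so $\operatorname{Re}A=\tfrac12(A+\overline A)$ is a real completion of $M$ of rank at most $\rank A+\rank\overline A=2r$. This reproves the bound $2r$; I will improve it to $2r-1$ for generic $M$.

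The improvement rests on one observation. Writing $A=\sum_{i=1}^{r}u_iv_i^{\top}$, suppose one summand, say $u_1v_1^{\top}$, can be taken to be a real rank-one matrix. Then $\operatorname{Re}A=u_1v_1^{\top}+\operatorname{Re}\bigl(\sum_{i\ge2}u_iv_i^{\top}\bigr)$ has rank at most $1+2(r-1)=2r-1$ and still completes $M$. Equivalently, it suffices to find a real rank-one partial matrix $R$ with the property that $M-R$ has complex completion rank at most $r-1$: then $M=R+(M-R)$, and applying the conjugation bound to the rank-$(r-1)$ piece yields a real completion of $M$ of rank at most $1+2(r-1)=2r-1$. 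If I can do this for all $M$ in a dense subset of $\R^{|E|}$, the conclusion follows: a typical rank $s$ is attained on a set $W$ with nonempty interior, $W$ meets the dense rank-$(\le 2r-1)$ locus, and at a common point $s\le 2r-1$; by Proposition~\ref{prop:typicalRanksInterval} the typical ranks form an interval, so the maximal one is at most $2r-1$.

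It remains to produce the real summand $R$, and this is the crux. Over $\C$ it is automatic: any rank-$r$ decomposition $A=\sum u_iv_i^{\top}$ exhibits $R=u_1v_1^{\top}$ with $M-R$ of completion rank at most $r-1$. The entire difficulty is to arrange $R$ real. Let $X\subseteq\R^{|E|}$ be the (cone over the) variety of rank-one $G$-partial matrices and let $\Sigma_k$ be the closure of the set of sums of $k$ elements of $X$, so that generic completion rank $r$ means $\Sigma_r=\R^{|E|}$ while $\Sigma_{r-1}$ is a proper subvariety. Consider the incidence set of real $R\in X$ for which $M-R$ lies in $\Sigma_{r-1}$; since $M-R$ is real, this is a real condition of codimension at most $|E|-\dim\Sigma_{r-1}$ inside $X$, whose dimension is at most $m+n-1$. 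The secant inequality $|E|=\dim\Sigma_r\le\dim\Sigma_{r-1}+\dim X$ (each new rank-one summand adds at most $\dim X$ dimensions) makes the expected dimension nonnegative, so the incidence is nonempty over $\C$.

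The obstacle is thus purely real: a nonempty variety defined over $\R$ may have no real points, so one must show that for a dense set of real $M$ the incidence carries a real point. I expect to handle this with the scaling homogeneity together with a Terracini/submersion argument: show that the real join map $(R,S)\mapsto R+S$, from real rank-one matrices and real points of $\Sigma_{r-1}$, has surjective differential at a suitable real pair, so that its image contains a Euclidean-open set, and then use the cone structure (invariance under nonzero scaling) to spread this open set until it meets every region on which a typical rank occurs. Establishing this real surjectivity, rather than the dimension count that guarantees solvability over $\C$, is where the genuine work of the proof will lie.
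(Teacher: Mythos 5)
Your reduction is sound as far as it goes: if, for a dense set of real $M$, one can write $M=R+S$ with $R$ a real rank-one $G$-partial matrix and $S$ a real partial matrix of \emph{complex} completion rank at most $r-1$, then taking the real part $\tfrac12(A+\overline{A})$ of a rank-$(r-1)$ complex completion $A$ of $S$ gives a real completion of $M$ of rank at most $1+2(r-1)=2r-1$, and density does force every typical rank to be at most $2r-1$. But the existence of the real summand $R$ --- which you yourself flag as ``where the genuine work of the proof will lie'' --- is precisely the content of the theorem, and it is not established: what you have is a program, not a proof. Worse, the route you sketch cannot close the gap as stated. The locus of $M$ admitting such a decomposition is itself a cone (if $M=R+S$ then $tM=tR+tS$), and so is each locus on which a fixed typical rank occurs, since completion rank is invariant under nonzero scaling. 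Hence scaling ``spreads'' nothing: two scale-invariant sets in $\R^{|E|}$ meet away from the origin if and only if they already meet on the unit sphere, and a submersion argument only hands you one open cone, with no control over which spherical directions it covers. Whether the decomposable cone meets every typical-rank region is exactly the kind of real-points question (a nonempty complex variety whose real locus misses entire full-dimensional regions) that makes multiple typical ranks possible in the first place, so nothing short of a genuinely new input closes this step.

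For contrast, the paper's proof sidesteps all of this geometry with a degree count. Since $\gcr(G)=r$, Proposition \ref{prop:dimcount} gives $|E|\le r(m+n-r)$, so the minimum degree of $G$ is at most $2|E|/(m+n)\le 2r-2r^2/(m+n)<2r$; the same bound applies to every subgraph of $G$ because generic completion rank is monotone under passing to subgraphs, so the $2r$-core of $G$ is empty, and Corollary \ref{cor:core} --- which rests on the vertex-deletion Lemma \ref{lem:extracolumn} --- yields $\mtr(G)\le 2r-1$. If you want to salvage the join approach, the missing real-decomposition statement would need some such vertex-by-vertex mechanism anyway, at which point the core argument is both shorter and stronger.
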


There are two easy lower bounds on the generic completion rank of $G$. 
Recall that $K_{r,r}$ denotes the complete bipartite graph
on two parts, each of size $r$.

\begin{prop*}[Proposition \ref{prop:dimcount}] Let $G$ be a bipartite graph with parts of size $m$ and $n$ and edge set $E$.
Then the following are lower bounds on the generic completion rank of $G$:
\begin{enumerate}
	\item the smallest $k$ such that $k(m+n) - k^2 \geq |E|$
	\item the largest $r$ such that $G$ contains $K_{r,r}$ as a subgraph.
\end{enumerate}
\end{prop*}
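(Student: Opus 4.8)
The plan is to treat both bounds as restriction arguments on the determinantal variety $V_k \subseteq \mat_{m\times n}$ of matrices of rank at most $k$, together with the coordinate projection $\pi_k \colon V_k \to \calm_G$ that forgets the unspecified entries. By the characterization of generic completion rank established in Proposition~\ref{prop:genericrank}, the generic completion rank of $G$ is at most $k$ precisely when $\pi_k$ is dominant, i.e.\ when its image is Zariski dense in $\calm_G \cong \cc^{|E|}$. Both lower bounds then follow by exhibiting obstructions to such dominance.

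For bound (1), I would invoke the classical fact that the irreducible variety $V_k$ has dimension $k(m+n) - k^2$. A dominant morphism cannot increase dimension, so if $\pi_k$ is dominant then $\dim V_k \ge \dim \calm_G = |E|$, that is, $k(m+n) - k^2 \ge |E|$. Writing $f(k) = k(m+n) - k^2$, the derivative $f'(k) = (m+n) - 2k$ is nonnegative for $k \le \min(m,n) \le (m+n)/2$, so $f$ is increasing on the full range of values the completion rank can take. Hence $\{k \le \min(m,n) : f(k) \ge |E|\}$ is upward closed, and the generic completion rank, being one such $k$, is at least the smallest of them. This is exactly the asserted bound, which is really just the namesake dimension count.

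For bound (2), suppose $G$ contains $K_{r,r}$ as a subgraph, on row-vertices $i_1,\dots,i_r$ and column-vertices $j_1,\dots,j_r$. Then every entry $(i_a,j_b)$ is specified, so the corresponding $r\times r$ submatrix is completely determined by the data of $M$; its determinant is a nonzero polynomial in the specified entries and is therefore nonzero with probability one for generic data. Since the rank of any submatrix is at most the rank of the whole matrix, every completion of $M$ has rank at least $r$. Thus the generic completion rank is at least $r$, and taking $r$ as large as possible gives the claim.

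The arguments are short once this framework is in place, and I do not anticipate a serious obstacle. The only points requiring care are the dimension formula for $V_k$ and the monotonicity of $f$ on $[0,\min(m,n)]$, which is what upgrades the necessary inequality $f(k)\ge|E|$ into a genuine lower bound of the stated form; bound (2) is then immediate from the subdeterminant observation.
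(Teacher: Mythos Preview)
Your argument is correct and matches the paper's proof in substance: both parts rest on the dimension of the determinantal variety (for (1)) and the fully specified $r\times r$ submatrix (for (2)). One small remark: the monotonicity step you single out as ``what upgrades the necessary inequality into a genuine lower bound'' is actually superfluous --- once you know $f(\gcr(G))\ge |E|$, the generic completion rank is automatically at least the smallest $k$ with $f(k)\ge |E|$, simply by definition of ``smallest''; no structural information about the solution set is needed.
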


If the first bound is sharp, we say that the generic completion rank of $G$ is
\textit{predicted by the dimension count.}
Observe that a $K_{r,r}$ subgraph corresponds to a 
fully specified $r \times r$ submatrix.
Therefore if the second bound above is sharp, we say that the generic completion rank of $G$ is \textit{predicted by the maximal specified submatrix}.

We show that while the behavior of the generic completion rank and typical ranks is quite complicated in general, the above bounds are actually sharp for several large classes of graphs. 
One example is the class of \emph{bipartite chordal graphs}; see Subsection~\ref{subsec:bipartitechordal} and \cite{bisimplicialMR493395} as a general reference. We show the following:
\begin{thm*}[Theorem \ref{thm:bipartitechordal}, parts (a) and (b)]
Let $G$ be a bipartite chordal graph. Then the generic completion rank and the maximal typical rank of $G$ are predicted by the maximal specified submatrix. In particular, there is only one typical completion rank in $\mathcal{M}_G$, which is equal to the generic completion rank.
\end{thm*}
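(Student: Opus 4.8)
The plan is to reduce everything to an explicit construction of a rank-$r$ completion that works over any infinite field, in particular simultaneously over $\C$ and over $\R$. Write $r$ for the largest integer with $K_{r,r}\subseteq G$. By Proposition~\ref{prop:dimcount}(2) we already have $\gcr(G)\ge r$, and since every real $G$-partial matrix is also a complex one, the real completion rank of a generic real matrix is at least its complex completion rank, which is $\gcr(G)\ge r$; thus the minimal typical rank, which equals $\gcr(G)$ by Proposition~\ref{prop:typicalRanksInterval}, is also $\ge r$. Hence it suffices to prove the reverse inequality in the strong form: for a Zariski-dense set of real (and of complex) $G$-partial matrices there is a real (resp.\ complex) completion of rank $\le r$. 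Over $\C$ this yields $\gcr(G)\le r$, so $\gcr(G)=r$; over $\R$ it shows that the completion rank equals $r$ on a dense set, so $r$ is the unique typical rank, and both it and $\gcr(G)$ are predicted by the maximal specified submatrix.

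The structural input is the existence of bisimplicial edges. I will use the fact from \cite{bisimplicialMR493395} that every chordal bipartite graph with at least one edge has a bisimplicial edge $\{u,v\}$, meaning that $N(u)\cup N(v)$ induces a complete bipartite subgraph. That biclique is a $K_{\deg(u),\deg(v)}$ inside $G$, so $\min(\deg u,\deg v)\le r$, as otherwise $G$ would contain $K_{r+1,r+1}$. Deleting an endpoint of degree $\le r$ leaves an induced subgraph, which is again chordal bipartite and still contains no $K_{r+1,r+1}$; iterating, I obtain an ordering $w_1,\dots,w_N$ of the vertices of $G$ in which each $w_k$ has at most $r$ neighbors among $w_{k+1},\dots,w_N$. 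In other words, chordal bipartite graphs with largest biclique $K_{r,r}$ are $r$-degenerate, and this degeneracy ordering is the backbone of the construction.

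I then build the completion by reinserting the vertices in the reverse order $w_N,w_{N-1},\dots,w_1$, maintaining at each stage a rank-$\le r$ completion of the subgraph induced on the vertices inserted so far. When $w_k$ is inserted it has at most $r$ already-present neighbors. Suppose $w_k$ is a row (the column case is symmetric): its already-completed block has rank $\le r$, so I look for the new row inside the row space, i.e.\ as a coefficient vector $c$ applied to a chosen basis of $r$ rows, with $c$ ranging over the base field to the power $r$. Matching the at most $r$ prescribed entries of $w_k$ imposes at most $r$ linear equations on $c$, and this system is solvable over the base field as soon as the corresponding $\le r$ columns of the basis are linearly independent. For generic data this independence holds, so a solution $c$ exists over the same field as the data; in particular over $\R$ we obtain a genuine real completion. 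Each required independence is a nonempty Zariski-open condition, so the finitely many conditions hold simultaneously on a dense open set.

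Assembling these steps proves the theorem: over $\C$ the construction forces $\gcr(G)\le r$, hence $\gcr(G)=r$, and over $\R$ it produces a real rank-$r$ completion for a dense set of real data, so that the real completion rank equals $r$ generically and $r$ is the only typical rank. I expect two points to need the most care. The first is the structural degeneracy statement: it rests on the bisimplicial-edge theorem together with the elementary but crucial observation that a bisimplicial edge cannot have both endpoints of degree exceeding $r$. The second is the genericity bookkeeping, namely verifying that the successive linear systems are simultaneously solvable for a dense set of data. Here it is important that the argument is constructive and field-independent: a pure dimension or dominance (differential) argument would only yield $\gcr(G)=r$ together with a full-dimensional, but possibly non-dense, set of real matrices completing to rank $r$, whereas uniqueness of the typical rank requires solvability for essentially all real data, which the explicit linear-algebra construction delivers.
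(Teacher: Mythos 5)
Your proposal is correct and takes essentially the same route as the paper: the paper also peels off one endpoint of a bisimplicial edge at a time (necessarily of degree at most $r$) and re-extends by one row or column via exactly your linear-algebra step, which is its Lemma~\ref{lem:extracolumn}, with the genericity bookkeeping you rightly flag being handled by its Lemma~\ref{lem:asGenericAsPossible}. The only organizational difference is that you fix $r$ globally and extract $r$-degeneracy (equivalently, an empty $(r+1)$-core, after which Corollary~\ref{cor:core} finishes the job), whereas the paper runs an induction in which the maximal biclique size may change, at the cost of a short case analysis.
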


Furthermore, one may ask which partial matrices are completable to the generic completion rank.
The exceptional set of partial matrices, which have completions of rank smaller than the generic completion rank or no completion of rank equal to the generic completion rank, is lower-dimensional in the complex setting,
since it is contained in a Zariski-closed set. Nevertheless, finding the exceptional set exactly is often difficult. To illustrate this, in Section \ref{sec:computations} we completely describe the behavior of generic and typical ranks for the case of $4 \times 4$ matrices with unknown diagonal. For bipartite chordal graphs, we show that if all fully specified minors of a partially specified matrix in $\mathcal{M}_G$ are non-zero, then the matrix is completable to the generic completion rank.

\begin{thm*}[Theorem \ref{thm:bipartitechordal} Part (c)]
Let $G$ be a bipartite chordal graph. Every $G$-partial matrix $M$ whose completely specified minors are non-vanishing
can be completed to rank $\gcr(G)$.
\end{thm*}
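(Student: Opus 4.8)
The plan is to argue by induction on the number of vertices of $G$, reconstructing one row (or column) at a time in an order dictated by the chordal bipartite structure. By parts (a) and (b) I may assume $\gcr(G)=r$ equals the size of a largest complete bipartite subgraph, i.e.\ of a largest fully specified square submatrix; in particular a fixed $K_{r,r}$ gives an $r\times r$ block whose minor is nonzero by hypothesis. Since completing $M$ to rank $r$ is equivalent to completing $M^{\mathsf T}$ (whose pattern $G^{\mathsf T}$ is again bipartite chordal), I may freely interchange rows and columns. The combinatorial input I will use is the perfect elimination structure of bipartite chordal graphs from \cite{bisimplicialMR493395}: such a graph has a bisimplicial edge $(u,v)$ — one for which $N(v)\times N(u)$ is a completely specified (complete bipartite) block — and, equivalently, its reduced adjacency matrix can be reordered to avoid the pattern $\left(\begin{smallmatrix}1&1\\ 1&0\end{smallmatrix}\right)$, so that the neighborhoods of the rows meeting a fixed column are nested.

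First I would set up the inductive step. Pick a bisimplicial edge $(u,v)$ and, transposing if necessary, assume $\deg(u)\le\deg(v)$; since $N(v)\times N(u)$ is a complete bipartite block it contains a $K_{s,s}$ with $s=\min(\deg u,\deg v)$, whence $\deg(u)\le r$. Now $G-u$ is again bipartite chordal, its completely specified minors are among those of $M$ (hence nonvanishing), and $\gcr(G-u)\le r$. By induction the restriction of $M$ to $G-u$ admits a completion $\hat M'$ of rank $\rho=\gcr(G-u)\le r$, and it remains to fill the unspecified entries of row $u$, matching the specified values $M_u$ on $C:=N(u)$, without raising the rank above $r$. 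If $\rho<r$ this is automatic, since filling the remaining entries of row $u$ arbitrarily yields a matrix of rank at most $\rho+1\le r$. So assume $\rho=r$, let $W\subseteq\kk^{V}$ be the row space of $\hat M'$, and observe that a rank-$r$ completion of row $u$ exists precisely when the prescribed vector $(M_u)_{j\in C}$ lies in $\pi_C(W)$, the coordinate projection of $W$ onto $C$; equivalently, when it lies in the row space of $\hat M'[U\setminus\{u\},\,C]$.

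The certificate for solvability comes from the bisimplicial block. Write $R=N(v)\ni u$. By bisimpliciality the block $M[R\setminus\{u\},\,C]$ is completely specified of size $(\deg v-1)\times\deg u$. When $\deg v>\deg u$ this block is (square or) tall with $|C|=\deg u$ columns, so by the nonvanishing-minor hypothesis it has full column rank $|C|$; as these rows are unchanged in $\hat M'$, the projection $\pi_C$ is already surjective and row $u$ can be completed. This settles every step in which the chosen bisimplicial edge is not "square".

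The main obstacle is the degenerate case $\deg(u)=\deg(v)=s$, in which the completely specified block available in columns $C$ after deleting $u$ is only $(s-1)\times s$: the full $s\times s$ block $M[R,C]$ is invertible but uses the very row $u$ we are reconstructing, so $\hat M'[U\setminus\{u\},\,C]$ inherits only rank $s-1$ from $R$, and $\pi_C$ need not be surjective. Indeed $M_u|_C\notin\operatorname{rowspace}(M[R\setminus\{u\},C])$, so solvability forces $\hat M'$ to have already raised the rank of columns $C$ to $s$ using rows outside $R$. To control this I would strengthen the induction hypothesis, asserting that the rank-$\gcr$ completion can be chosen so that a prescribed family of maximal specified square blocks, together with their relevant one-row extensions, stays invertible, and I would match this invariant to the elimination order by peeling vertices in the order coming from the $\Gamma$-free (doubly lexical) reordering, so that the neighborhood of each peeled vertex is nested inside enough others to transport the missing rank into $\hat M'$. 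Proving that placing the new row or column never destroys a designated certifying minor — the step where the absence of induced $6$-cycles, i.e.\ $\Gamma$-freeness, is essential — is the crux of the argument and the part I expect to demand the most care.
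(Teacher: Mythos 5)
Your overall strategy is the same as the paper's: induct on the number of vertices using a bisimplicial edge $(u,v)$ of the bipartite chordal graph (Golumbic--Goss), delete the endpoint of smaller degree, complete the smaller partial matrix by induction, and then fill in the deleted row. Your two easy cases are handled correctly, and in fact more explicitly than in the paper: if $\gcr(G-u)<r$ any filling of the new row works, and if $\deg v>\deg u$ the fully specified $(\deg v-1)\times\deg u$ block on $\bigl(N(v)\setminus\{u\}\bigr)\times N(u)$ has full column rank by the nonvanishing-minor hypothesis, so the columns of any completion indexed by $N(u)$ are independent and the prescribed entries of row $u$ can be matched inside the row space.

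However, the proposal is incomplete: the square case $\deg u=\deg v=s$ with $\gcr(G-u)\ge s$ --- which you yourself call the crux --- is never proved. What you offer there (a strengthened induction tracking invertibility of designated blocks along a $\Gamma$-free/doubly lexical ordering) is a plan, not an argument, and it is exactly the hard point: as you observe, independence of the columns indexed by $N(u)$ cannot be certified by specified entries alone, so it must be an invariant of the inductively constructed completion. This is how the paper closes the loop: its extension step is Lemma~\ref{lem:extracolumn}(1), whose hypothesis is a condition on the \emph{completed} matrix --- all of its $r\times r$ minors are nonzero --- and under that hypothesis any $k\le r$ columns are independent, so the square case is no harder than the tall case. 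In other words, the inductive invariant should be ``$M$ admits a rank-$\gcr(G)$ completion whose maximal minors are all nonvanishing'' (the extension step of Lemma~\ref{lem:extracolumn} leaves free entries that can be chosen generically to propagate this), rather than your bookkeeping of particular certifying minors tied to the elimination order; the paper itself is terse about verifying the propagation, but that reformulation is the missing idea. So: right strategy, correct easy cases, but the decisive step of the theorem is absent, which makes this a genuine gap rather than a complete proof.
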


For relations to rank determinacy, see \cite{CJRWMR1038313,woerdeman1987lower}.
We also derive a sufficient condition for a graph to have generic completion rank predicted by the dimension count
(Lemma \ref{lem:gcrExpected}).
We use this lemma to prove that a certain subclass of bipartite circulant graphs (as defined in \cite{meyer2012zero}) have generic completion rank predicted by the
dimension count (Propositions \ref{prop:circulantIntegers} and \ref{prop:noDiagonalGeneric}).
One of our motivations for looking at this class of graphs is that none of our methods
rule out the possibility that they exhibit more than one typical rank.
We currently know that one of them, the graph of the $3$-cube,
exhibits two typical ranks, but beyond that the existence of multiple typical
ranks for this class of graphs is completely open.

We prove several more ``advanced" inequalities on the generic and typical completion ranks. 
The following inequality uses the notion of the $k$-core of a graph (see Definition~\ref{def:core}). See also \cite{GSMR3706762} for a related result in the symmetric setting.
\begin{cor*}[Corollary \ref{cor:core}]
Let $r$ be the smallest integer such that the $r$-core of $G$ is empty.
Then the maximal typical rank of $G$ is at most $r-1$.
\end{cor*}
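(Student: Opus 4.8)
The plan is to prove the equivalent statement that a generic real $G$-partial matrix admits a completion of rank at most $r-1$; since a typical rank is by definition the minimal completion rank on a full-dimensional semialgebraic subset of $\calm_G$, and by Proposition~\ref{prop:typicalRanksInterval} the typical ranks form an interval topped by the maximal typical rank, this bounds the maximal typical rank by $r-1$. First I would unwind the hypothesis into a statement about vertex orderings. By Definition~\ref{def:core} the $k$-core is obtained by repeatedly deleting vertices of degree less than $k$, so the assumption that the $r$-core is empty while, by minimality of $r$, the $(r-1)$-core is not, says precisely that $G$ has degeneracy $d:=r-1$: there is an ordering $u_1,\dots,u_N$ of the vertices in which every $u_t$ has at most $d$ neighbors among $u_1,\dots,u_{t-1}$, namely the reverse of a peeling order that always removes a vertex of degree at most $d$.

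Now I would build the completion in factored form. Assign to each row vertex $i$ a vector $a_i\in\R^d$ and to each column vertex $j$ a vector $b_j\in\R^d$, and set the $(i,j)$ entry to $\scp{a_i,b_j}$; the resulting matrix has rank at most $d$. Orient each edge toward its later endpoint in the degeneracy ordering, so each vertex $u_t$ receives at most $d$ incoming edges, and process the vertices in the order $u_1,\dots,u_N$, choosing the vector of $u_t$ to satisfy the constraints from its incoming edges. If $u_t$ is a column vertex with already-processed row-neighbors $i_1,\dots,i_k$ (so $k\le d$), then $b_{u_t}$ must solve the linear system $\scp{a_{i_\ell},b_{u_t}}=M_{i_\ell u_t}$ for $\ell=1,\dots,k$, whose coefficient matrix has rows $a_{i_1},\dots,a_{i_k}$; the row-vertex case is symmetric. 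Since $k\le d$, this system is solvable as soon as the $k$ neighbor vectors are linearly independent, with free parameters remaining when $k<d$. Counting edges against the bound shows $|E|=\sum_t(\#\text{incoming edges of }u_t)$, so this triangular procedure matches every specified entry; equivalently, at such a point the parametrization $\phi\colon(U,V)\mapsto (UV^{\!\top})|_E$ from $\R^{m\times d}\times\R^{d\times n}$ to $\calm_G$ is a submersion, its image therefore contains a Euclidean-open set, and a generic real $M$ has a rank-$\le d$ completion.

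The one point needing genericity — and the main obstacle — is the linear independence of the back-neighbor vectors $a_{i_1},\dots,a_{i_k}$ at each step, which is exactly what keeps every linear system solvable and prevents the procedure from stalling. I would handle this by a Zariski-openness argument: each independence condition is the non-vanishing of a $k\times k$ minor of the matrix of chosen vectors, a polynomial in the entries of $M$ and in the free parameters introduced at the underdetermined steps, so the locus where all of them hold is open, and it suffices to produce one point where it is nonempty. Such a witness comes from reverse-engineering a generic rank-$d$ matrix $UV^{\!\top}$: the induced vectors $a_i,b_j$ are then in general position, any at most $d$ of them are independent, all the relevant minors are nonzero, and this is precisely a point where $\phi$ is a submersion. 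Density then propagates independence to generic $M$, completing the argument. The same vertex-by-vertex elimination underlies the related symmetric bound of \cite{GSMR3706762}.
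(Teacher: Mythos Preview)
Your proof is correct and follows the same idea as the paper: an empty $r$-core yields a degeneracy ordering along which one completes vertex by vertex, each new vertex imposing at most $r-1$ linear constraints that are solvable provided the already-assigned neighbor vectors are independent. The paper obtains this in one line by invoking Corollary~\ref{cor:lowDegreeVertices} (which in turn rests on Lemma~\ref{lem:extracolumn}), whereas you unfold the same induction directly in factored form $UV^\top$; the underlying mechanism is identical, just packaged differently.
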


A \emph{bipartite clique sum} of bipartite graphs $G$ and $H$ is a graph obtained by gluing $G$ and $H$ along a common
complete bipartite subgraph. We also show that the generic and typical completion ranks behave well under the operation of bipartite clique sum (see \cite{BS} for a related result).

\begin{thm*}[Theorem \ref{thm:bipartiteCliqueSum}]
	Let $G = G_1\cup G_2$ be a bipartite clique sum of bipartite graphs
	$G_1$, $G_2$ along a complete bipartite graph $K_{m,n}$.
	The maximal typical rank of $G$ is the maximum $\max\{\mtr(G_1),\mtr(G_2)\}$ between the maximal typical ranks of the summands, given that this number is at least $\max\{m,n\}$.
\end{thm*}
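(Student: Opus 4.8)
The plan is to convert the clique-sum structure into a block decomposition of the completed matrix, prove the easy lower bound by restriction, and prove the matching upper bound by a gluing argument at the level of rank-$r$ factorizations. Set $r = \max\{\mtr(G_1),\mtr(G_2)\}$ and index the rows of a $G$-partial matrix by $R_0\cup R_1\cup R_2$ and the columns by $C_0\cup C_1\cup C_2$, where $R_0$ (size $m$) and $C_0$ (size $n$) are the two parts of the shared $K_{m,n}$ and $R_i,C_i$ are the vertices occurring only in $G_i$. Since there are no specified entries between the $G_1$-only and $G_2$-only parts, a $G$-partial matrix $M$ has a fully specified $m\times n$ block $M_{00}$ on $(R_0,C_0)$, the $G_1$-pattern on all blocks meeting $R_1$ or $C_1$, the $G_2$-pattern on all blocks meeting $R_2$ or $C_2$, and the two blocks $(R_1,C_2)$ and $(R_2,C_1)$ entirely unspecified.

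For the lower bound, restriction to rows $R_0\cup R_1$ and columns $C_0\cup C_1$ is a coordinate projection $\mathcal{M}_G\to\mathcal{M}_{G_1}$, hence open and surjective, and a rank-$k$ completion of $M$ restricts to a rank-$\le k$ completion of $M|_{G_1}$. Therefore the completion rank of $M$ is at least that of $M|_{G_1}$, and pulling back a full-dimensional locus on which $G_1$ attains $\mtr(G_1)$ gives $\mtr(G)\ge\mtr(G_1)$; symmetrically $\mtr(G)\ge\mtr(G_2)$, so $\mtr(G)\ge r$. This step uses neither genericity nor the hypothesis $r\ge\max\{m,n\}$.

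For the upper bound, which is where the hypothesis enters, take a generic $M$; then $M|_{G_1}$ and $M|_{G_2}$ are generic and complete to rank $\le\mtr(G_i)\le r$. Write each completion as $N_i=U_iV_i^T$ with $r$ columns and split $U_i=\begin{pmatrix}A_i\\ B_i\end{pmatrix}$, $V_i=\begin{pmatrix}E_i\\ F_i\end{pmatrix}$, where $A_i$ (size $m\times r$) and $E_i$ (size $n\times r$) factor the shared rows $R_0$ and columns $C_0$, so that $A_1E_1^T=A_2E_2^T=M_{00}$. The reduction is to a gluing lemma: find $g\in\gl_r$ with $A_2g=A_1$ and $E_1g^T=E_2$. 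Given such a $g$, setting $U=\begin{pmatrix}A_1\\ B_1\\ B_2g\end{pmatrix}$ and $V=\begin{pmatrix}E_1\\ F_1\\ F_2g^{-T}\end{pmatrix}$ yields, by a direct block check, a rank-$\le r$ matrix $N=UV^T$ agreeing with $N_1$ on $G_1$ and with $N_2$ on $G_2$, i.e.\ a rank-$\le r$ completion of $M$. Combined with the lower bound this forces the completion rank to equal $r$ on a full-dimensional set while exceeding $r$ only on a measure-zero set, so $\mtr(G)=r$ as claimed. The two linear conditions on $g$ are compatible precisely because both sides factor the same block: multiplying $A_2g=A_1$ on the right by $E_1^T$ and $E_1g^T=E_2$ on the left by $A_2$ both yield $M_{00}$, so the $mn$ corner equations are shared. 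A dimension count then gives solution space of dimension $(r-m)(r-n)$, which is nonnegative exactly when $r\ge\max\{m,n\}$; for $\min\{m,n\}<r<\max\{m,n\}$ the count is negative and no gluing exists, which is why the hypothesis is indispensable.

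The main obstacle is not mere solvability of this linear system but producing an \emph{invertible} $g$, together with arranging the factorizations so that this is possible. I would first choose the rank-$\le r$ factorizations of $N_1,N_2$ in general position, using the extra $r-\rank N_i$ columns of $U_i,V_i$ to make the overlap factors (the rows of $A_i$ and of $E_i$) as independent as $r\ge\max\{m,n\}$ permits. In the extremal case $r=\max\{m,n\}=n$ the rows of $E_1$ and of $E_2$ then form bases of $\mathbb{R}^r$, so $g^T$ is forced to be the change of basis carrying one to the other; this $g$ is automatically invertible, and the compatibility relation above guarantees that it also satisfies $A_2g=A_1$. For $r>\max\{m,n\}$ the solution space has positive dimension $(r-m)(r-n)$, and I would show it is not contained in the determinant hypersurface by exhibiting a single invertible element through the same general-position choice and then invoking openness and density of $\gl_r$. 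Verifying that a generic completion of each side can indeed be factored in the required general position — equivalently, that the gauge freedom together with the slack $r-\max\{m,n\}$ suffices to make the overlap vectors independent — is the step I expect to demand the most care.
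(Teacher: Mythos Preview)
Your approach is genuinely different from the paper's and is a natural one: you glue rank-$r$ factorizations via a gauge transformation $g\in\gl_r$, whereas the paper works directly with row spaces. After completing each $G_i$-block to rank $r_i$ with \emph{all} $r_i\times r_i$ minors nonvanishing (this is the paper's Lemma~\ref{lem:asGenericAsPossible}, which you would also need), the paper explicitly builds a matrix $E$ filling part of the lower-left unknown block so that a specific set of $r_1$ rows spans the row space of the completed $G_1$-block; it then extends row relations from the $G_1$-side across to the $G_2$-columns and vice versa. This is entirely constructive linear algebra and never passes through a factorization or a group element.

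Your route would work, but as you already flag, the existence of an \emph{invertible} $g$ is the crux and you have not closed it. The dimension count $(r-m)(r-n)$ is only valid once you know $A_2$ has full row rank $m$ and $E_1$ has full row rank $n$; this in turn requires that the completions $N_i$ can be chosen so that the $R_0$-rows and $C_0$-columns are independent, which is exactly what Lemma~\ref{lem:asGenericAsPossible} provides. Even granting that, showing the affine solution space is not contained in $\{\det g=0\}$ needs an explicit witness, and your sketch (``same general-position choice'') does not yet produce one. A cleaner way is to gauge so that $E_1=E_2=[I_n\mid 0]$; then $g=\begin{pmatrix}I_n & X\\ 0 & Y\end{pmatrix}$ and the remaining condition $M_{00}X+A_2'Y=A_1'$ is solvable with $Y$ invertible when $m\le n$, while the case $m>n$ (so $r\ge m>n$) needs a symmetric argument on the $A$-side.

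There is also an edge case you do not address: when $r$ exceeds the number of rows (or columns) of the $G_2$-block, no completion of $N_2$ has rank $r$, so the rank-$r$ factorization you posit does not exist with $U_2,V_2$ of full column rank. The paper handles this by temporarily appending $r-m_2$ fully specified rows to $G_2$, reducing to the previous case. Your factorization argument would need either the same trick or a careful treatment of padded factorizations with $U_2$ (or $V_2$) not of full column rank.
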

The above theorem allows us to find more examples of graphs with more than one typical rank; see Example \ref{ex:twoTypicalRanksFromCube}.
This is because it also holds with ``generic completion rank'' substituted for ``maximal typical rank.''

Finally, we briefly examine the symmetric completion problem, where both the partial matrices as well as the completions are constrained to be symmetric (see \cite{BS,GSMR3706762,uhler2012geometry} for connections to algebraic statistics and Gaussian graphical models).
In this case, patterns of known and unknown entries are encoded by semisimple graphs (i.e. loops allowed but no multiple edges)
and typical ranks are defined analogously.
It is possible, unlike the non-symmetric case, that a graph on $n$ vertices has $n$ as a typical rank.
We call such graphs \emph{full-rank typical} and prove several results about their properties.
We use these results to construct a family of semisimple graphs with no upper bound
on the number of typical ranks exhibited by its members.

\begin{thm*}[Theorem \ref{thm:gngcr}]
Let $\mathcal{M}$ be the collection of $2n\times 2n$ symmetric matrices with unspecified antidiagonal. Then $2n$ is a typical symmetric completion rank of $\mathcal{M}$, i.e. $\mathcal{M}$ is full-rank typical. The generic symmetric completion rank of $\mathcal{M}$ is  $
    	2n - \left\lfloor\frac{1}{2}\left(\sqrt{1+8n}-1\right)\right\rfloor.$
	
\end{thm*}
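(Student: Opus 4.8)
The plan is to treat the two assertions by different mechanisms: full-rank typicality is a statement of real-openness for the completion determinant, while the generic rank is a complex dimension count whose sharpness must be certified by a tangent space computation.

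For full-rank typicality, note first that a partial matrix $M\in\mathcal{M}$ has minimal real completion rank $2n$ exactly when the polynomial $f_M(x_1,\dots,x_n):=\det M(x)$ in the $n$ free antidiagonal variables (where $x_i$ is the entry in the symmetric pair $(i,2n+1-i)$) has no real zero. The key structural step is to conjugate by the orthogonal matrix $P$ diagonalizing the antidiagonal flip $J$ (with $J_{i,j}=1$ iff $i+j=2n+1$). In the eigenbasis $(e_i\pm e_{2n+1-i})/\sqrt2$ the free antidiagonal directions become the linked diagonal directions $\mathrm{diag}(x,-x)$, so every completion takes the form $N_0+\mathrm{diag}(x_1,\dots,x_n,-x_1,\dots,-x_n)$, with $N_0$ ranging over symmetric matrices whose two diagonal blocks share a common diagonal. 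Choosing the specified entries so that $N_0=\left(\begin{smallmatrix}0 & I\\ I & 0\end{smallmatrix}\right)$ produces a matrix that is block-diagonalizable into $2\times2$ blocks $\left(\begin{smallmatrix}x_i & 1\\ 1 & -x_i\end{smallmatrix}\right)$, giving $f_{M_0}(x)=(-1)^n\prod_i(x_i^2+1)$, which has no real zero.

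Openness, and hence typicality, will then follow from a uniform domination estimate rather than any compactness-at-infinity argument. Since $f_M$ has degree at most $2$ in each $x_i$, I will write $f_M-f_{M_0}=\sum_{a\in\{0,1,2\}^n}c_a\prod_i x_i^{a_i}$, where the $c_a$ depend continuously on $M$ and vanish at $M_0$, and use $|x_i|^{a_i}\le x_i^2+1$ for $a_i\in\{0,1,2\}$ to conclude $|f_M-f_{M_0}|\le\bigl(\sum_a|c_a|\bigr)\prod_i(x_i^2+1)$. Once $\sum_a|c_a|<1$, which holds on a neighborhood of $M_0$, the sign of $f_M$ agrees with that of $f_{M_0}$ at every real point, so $f_M$ has no real zero. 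Thus a whole neighborhood of $M_0$ has minimal completion rank $2n$, and $2n$ is typical.

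For the generic rank, the lower bound $\gcr(\mathcal{M})\ge 2n-\lfloor\tfrac12(\sqrt{1+8n}-1)\rfloor$ is the symmetric dimension count: there are $2n^2$ specified entries, whereas the symmetric rank-$\le r$ variety has dimension $n(2n+1)-\binom{2n-r+1}{2}$, and requiring this to be at least $2n^2$ rearranges to $\binom{2n-r+1}{2}\le n$, whose extremal solution is exactly the claimed $r^\ast$. For the matching upper bound I must show that the projection $\pi$ from the symmetric rank-$\le r^\ast$ variety to the $2n^2$ specified coordinates is dominant. At a smooth point $A$ of rank $r^\ast$ the normal space is $\mathrm{Sym}^2(\ker A)$, and $d\pi_A$ is surjective precisely when $\mathrm{Sym}^2(\ker A)$ meets the space of symmetric matrices with vanishing antidiagonal only in $0$; equivalently, when the evaluation sending $B\in\mathrm{Sym}^2(\ker A)$ to its $n$ antidiagonal entries is injective. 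Since $\dim\mathrm{Sym}^2(\ker A)=\binom{2n-r^\ast+1}{2}\le n$, this is dimensionally possible, and it suffices to produce one $d$-dimensional kernel $K$ (with $d=2n-r^\ast$) for which it holds.

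Exhibiting such a $K$ is the crux, and the main obstacle. Writing a vector of $\C^{2n}$ as a pair (top half, bottom half) in $\C^n\times\C^n$ and reindexing the bottom half by the involution $i\mapsto 2n+1-i$, the antidiagonal evaluation of $w_aw_b^{\mathsf{T}}+w_bw_a^{\mathsf{T}}$ becomes the Hadamard combination $p_a\odot q_b+p_b\odot q_a$ of the halves. Taking $K=\lspan(w_1,\dots,w_d)$ with $w_a=(v_a,v_a)$ reduces injectivity to the linear independence of the $\binom{d+1}{2}$ Hadamard products $\{v_a\odot v_b\}_{a\le b}$ in $\C^n$. Their $i$-th coordinates form exactly the second-Veronese image of the $i$-th column of $[v_1\mid\cdots\mid v_d]^{\mathsf{T}}$; since the Veronese variety spans $\mathrm{Sym}^2\C^d\cong\C^{\binom{d+1}{2}}$ and $n\ge\binom{d+1}{2}$, generic columns yield spanning Veronese images and hence the desired independence. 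This certifies a good $K$; injectivity being a Zariski-open and now nonempty condition, a generic $A$ of rank $r^\ast$ has surjective $d\pi_A$, so $\pi$ is dominant and $\gcr(\mathcal{M})=r^\ast$. The dimension count alone only says injectivity is possible, and the naive isotropic choices of $K$ (for instance coordinate subspaces) annihilate every antidiagonal entry, so the Hadamard/Veronese reformulation is genuinely needed to pin down an admissible kernel.
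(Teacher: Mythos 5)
Your proof is correct, and while its skeleton (a dimension count made sharp by a linear-algebra computation at a specially chosen kernel, plus an explicit witness for full rank) matches the paper's, both halves are executed differently enough to be worth comparing. For the generic rank, the paper first treats the triangular case $n=\binom{k+1}{2}$ (Proposition~\ref{prop:gcrOfHn}) by writing down an explicit kernel --- which, in your language, is exactly of the doubled form $w_a=(v_a,v_a)$ with $0/1$ vectors $v_a$ indexed by $[k]\sqcup\binom{[k]}{2}$ --- and then reaches general $n$ by induction on looped suspension vertices (Lemma~\ref{lem:suspensionGCR}), with the attendant maximality bookkeeping; you instead treat all $n$ uniformly, dualizing under the trace form to the normal space $\mathrm{Sym}^2(\ker A)$, reducing surjectivity of $d\pi_A$ to linear independence of the Hadamard products $v_a\odot v_b$, and deducing that from linear nondegeneracy of the quadratic Veronese for generic $v_a$, using only $\binom{d+1}{2}\le n$. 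This avoids the suspension induction entirely; what the paper's route buys in exchange is an explicit, genericity-free certificate and a reusable lemma (whose nonsymmetric twin is Lemma~\ref{lem:projFullDim}). You leave implicit that every $d$-dimensional subspace is the kernel of some rank-$(2n-d)$ symmetric matrix and the tangent-space description at rank-exactly-$r$ points, but these are standard and are isolated in the paper as Lemma~\ref{lem:symmetricMatrixRankVarieties}. For full-rank typicality your witness is the paper's witness in disguise: undoing your conjugation by the eigenbasis of the flip, $M_0$ becomes the block matrix with blocks $\left(\begin{smallmatrix}1&x_i\\x_i&-1\end{smallmatrix}\right)$ that the paper assembles via Propositions~\ref{prop:completeLoopTypical} and~\ref{prop:semisimpleJoin}, with determinant $\prod_i(-1-x_i^2)$. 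The substantive difference is your domination estimate $|f_M-f_{M_0}|\le\bigl(\sum_a|c_a|\bigr)\prod_i(x_i^2+1)$: the paper passes from the existence of a witness to typicality via its stated ``primary tool'' (continuity of the determinant), but that sufficiency claim is delicate, since having no singular real completion is not an open condition in general --- completions can escape to infinity; for instance every completion of $\left(\begin{smallmatrix}0&1\\1&?\end{smallmatrix}\right)$ has determinant $-1$, yet the corresponding graph is not full-rank typical. Your estimate, exploiting that every $f_M$ has support in $\{0,1,2\}^n$ while $|f_{M_0}|$ equals the dominating product $\prod_i(x_i^2+1)$, supplies precisely the uniformity needed to close that step, so on this half your write-up is actually more complete than the paper's.
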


We immediately obtain the following Corollary:

\begin{cor*}[Corollary \ref{cor:typrankssym}]
Let $\mathcal{M}$ be the collection of $2n\times 2n$ symmetric matrices with unspecified antidiagonal. Then $\mathcal{M}$ has $$1+\left\lfloor\frac{1}{2}\left(\sqrt{1+8n}-1\right)\right\rfloor$$
typical symmetric completion ranks.
\end{cor*}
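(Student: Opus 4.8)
The plan is to read off the count directly from Theorem~\ref{thm:gngcr} together with the interval structure of the set of typical ranks. The symmetric analogue of Proposition~\ref{prop:typicalRanksInterval} tells us that the minimal typical symmetric completion rank of $\mathcal{M}$ coincides with its generic symmetric completion rank, and that every integer lying between the minimal and maximal typical ranks is itself a typical rank. Thus, to determine the full list of typical ranks it suffices to pin down the two endpoints of this interval.

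First I would identify the lower endpoint. By Theorem~\ref{thm:gngcr} the generic symmetric completion rank of $\mathcal{M}$ equals $2n - \left\lfloor\frac{1}{2}\left(\sqrt{1+8n}-1\right)\right\rfloor$, and by the interval statement this is precisely the minimal typical rank. Next I would identify the upper endpoint: since the matrices in $\mathcal{M}$ are $2n \times 2n$, no completion can have rank exceeding $2n$, so the maximal typical rank is at most $2n$; on the other hand, Theorem~\ref{thm:gngcr} asserts that $2n$ \emph{is} a typical rank, so the maximal typical rank is exactly $2n$.

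Combining these two endpoints with the interval property, the set of typical symmetric completion ranks of $\mathcal{M}$ is exactly the set of integers in the closed interval from $2n - \left\lfloor\frac{1}{2}\left(\sqrt{1+8n}-1\right)\right\rfloor$ to $2n$. Counting the integers in this interval gives
\[
2n - \left(2n - \left\lfloor\tfrac{1}{2}\left(\sqrt{1+8n}-1\right)\right\rfloor\right) + 1 = 1 + \left\lfloor\tfrac{1}{2}\left(\sqrt{1+8n}-1\right)\right\rfloor,
\]
which is the asserted number.

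Since every ingredient is already in hand, there is no genuine obstacle here: the corollary is a formal consequence of Theorem~\ref{thm:gngcr}. The only point requiring care is that the interval property and the identification of the minimal typical rank with the generic rank must be invoked in their symmetric form, rather than the bipartite form stated in Proposition~\ref{prop:typicalRanksInterval}; one should confirm that these two facts transfer verbatim to the symmetric setting. All of the substantive content is carried by Theorem~\ref{thm:gngcr}, which supplies both the generic rank and the full-rank typicality that fix the endpoints of the interval.
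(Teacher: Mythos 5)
Your proposal is correct and follows essentially the same route as the paper: the corollary is read off from Theorem~\ref{thm:gngcr}, whose ``moreover'' clause (itself resting on the symmetric interval property of Proposition~\ref{prop:symBasicResults} and the full-rank typicality from Propositions~\ref{prop:semisimpleJoin} and~\ref{prop:completeLoopTypical}) gives that the typical ranks are exactly the integers from $\sgcr(G_n)=2n-\left\lfloor\tfrac{1}{2}\left(\sqrt{1+8n}-1\right)\right\rfloor$ up to $2n$, and counting them yields the stated number. Your explicit verification that the endpoints are correct and that the interval facts hold in the symmetric setting matches the paper's (implicit) argument.
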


\subsection{Open problems, and conjectures}
We end this section by a list of open problems, questions, and conjectures. Wherever specialized notation is used, we refer the reader to the section where it is introduced.\\

\noindent \textbf{Typical Ranks:} An important and mostly unexplored research direction
is to find examples of graphs exhibiting multiple typical ranks.

\begin{pr}\label{ques:manytyp}
Find a family of bipartite graphs with an increasing number of typical ranks. Concretely, we ask: do $n\times n$ matrices with unspecified diagonal have an unbounded number of typical ranks as $n$ grows? 
\end{pr}

Dressler and Krone recently classified bipartite graphs with typical rank $n-1$ \cite{RM}.
It follows from their result that $n-1$ is a typical rank for matrices with unspecified diagonal if and only if $n\leq 4$.

At present we do not have any examples of bipartite graphs
with $3$ typical ranks, so as a first step toward Problem \ref{ques:manytyp}
we can ask for $3$ typical ranks:

\begin{pr}
Find a bipartite graph that has three or more typical ranks. Concretely, we conjecture that the graph $G(8,6)$ exhibits three typical ranks (see Section \ref{sec:examples}).
\end{pr}


It is known that all planar bipartite graphs have generic completion rank $2$ and it follows from Theorem \ref{thm:maxTypicalBound} that planar bipartite graphs have maximal typical rank $3$. However, we do not know which planar bipartite graphs have $3$ as a typical rank.

\begin{pr}\label{ques:bipart}
Characterize the planar bipartite graphs that have $3$ as a typical rank. More generally, characterize bipartite graphs with generic completion rank $2$ that have $3$ as a typical rank.
\end{pr}
An answer to Problem \ref{ques:bipart} would be implied by the following conjecture:

\begin{conj}\label{conj:core}
Let $G$ be a graph with non-empty $3$-core (see Definition \ref{def:core}). Then the maximal typical rank of $G$ is at least $3$.
\end{conj}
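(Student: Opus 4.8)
The plan is to separate the easy mechanism from the genuinely new difficulty, and then attack the latter. It is worth first contrasting with the $2$-core: a nonempty $2$-core means $G$ is not a forest, so $\gcr(G) \geq 2$, and since the minimal typical rank equals $\gcr(G)$ (Proposition~\ref{prop:typicalRanksInterval}) this already forces $\mtr(G) \geq 2$. One might hope the same ``the generic rank jumps'' argument yields $\gcr(G) \geq 3$ whenever the $3$-core is nonempty, but this fails: the $3$-cube is $3$-regular, hence equals its own $3$-core, yet has $\gcr = 2$. So the conjecture cannot be reduced to a statement about generic rank, and any proof must exploit the genuinely real phenomenon that on a full-dimensional set the real completion rank exceeds the complex generic completion rank.

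The first reduction is to the core itself. Let $H$ be the $3$-core, an induced subgraph supported on rows $R$ and columns $C$ with minimum degree $\geq 3$. Restricting any rank-$r$ completion of a $G$-partial matrix to $R \times C$ gives a rank-$\leq r$ completion of the corresponding $H$-partial submatrix, so the completion rank of the submatrix never exceeds that of the full matrix. Hence if some full-dimensional family of real $H$-partial matrices has completion rank $\geq 3$, then filling the remaining specified entries of $G$ arbitrarily produces a full-dimensional family of $G$-partial matrices of completion rank $\geq 3$; some value $\geq 3$ is then attained on a full-dimensional subset and is therefore a typical rank, giving $\mtr(G) \geq 3$. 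This lets me assume $G = H$ has minimum degree $\geq 3$. In that case $G$ is not a forest, so $\gcr(G) \neq 1$; if $\gcr(G) \geq 3$ we are done by Proposition~\ref{prop:typicalRanksInterval}, so the essential case is $\gcr(G) = 2$.

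For $\gcr(G) = 2$ the goal can be reformulated: it suffices to produce a nonempty open set $U \subseteq \mathcal{M}_G$ of real partial matrices admitting no real rank-$\leq 2$ completion, since then the completion rank is $\geq 3$ on $U$. The set of partial matrices possessing a real rank-$\leq 2$ completion is the image of the real rank-$\leq 2$ determinantal variety under the coordinate projection forgetting the unspecified entries, hence (Tarski--Seidenberg) is semialgebraic; because $\gcr(G)=2$ it is dense, so its complement can only be full-dimensional through an \emph{open, strict} obstruction. The natural certificate is therefore a single real partial matrix $M_0$ with no real rank-$2$ completion for which this failure is witnessed by a strict inequality and so persists on a neighborhood $U$. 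Note that I deliberately avoid any ``complex completions come in conjugate pairs'' counting: the fiber of the completion map need not be zero-dimensional for sparse minimum-degree-$3$ graphs, so only an inequality certificate is robust across the whole class.

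The main obstacle is constructing $M_0$ uniformly for every minimum-degree-$\geq 3$ bipartite graph with $\gcr = 2$. Unlike the complete-bipartite situation, such graphs need not contain a $K_{3,3}$ subgraph (again the $3$-cube), so there is no fully specified $3 \times 3$ submatrix from which to inherit typical rank $3$, and the maximal-specified-submatrix bound is useless. Instead I would exploit the cycle structure guaranteed by minimum degree $\geq 3$: along an even cycle a real rank-$2$ completion imposes a sign or square condition on an alternating product of $2\times 2$ minors, and I would choose the entries of $M_0$ on and near a suitable cycle to force that quantity to be negative, obstructing all real rank-$2$ solutions by a strict inequality while leaving the complex picture intact. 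Turning this into a certificate valid simultaneously across the full combinatorial variety of $\gcr = 2$ graphs is precisely where the difficulty lies, and is why the statement remains a conjecture; the tropical classification of generic-rank-$2$ graphs in \cite{Be} organizes the rank-$2$ completions and is the natural tool for locating the relevant cycle and checking that the non-real certificate survives. As sanity checks and a possible inductive base, the $3$-cube and the $4 \times 4$ pattern with unspecified diagonal already realize typical rank $3$, and Theorem~\ref{thm:bipartiteCliqueSum} suggests a route to propagate such certificates across bipartite clique sums.
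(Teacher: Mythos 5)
You were asked to prove Conjecture \ref{conj:core}, which the paper states as an \emph{open} conjecture: there is no proof in the paper to compare against, and your proposal is not one either --- you say so explicitly at the end. To give credit where due, your two reductions are correct. Restricting any completion of a $G$-partial matrix to the rows and columns of the $3$-core $H$ yields a completion of the induced $H$-partial matrix, so an open set of $H$-partial matrices of real completion rank $\geq 3$ lifts to an open set of $G$-partial matrices of completion rank $\geq 3$, and a semialgebraic finiteness argument then makes some rank $\geq 3$ typical; the same reduction also follows from the paper's Corollary \ref{cor:lowDegreeVertices}(2) applied along the reverse deletion order. The further reduction to the case $\gcr(G)=2$ via Proposition \ref{prop:typicalRanksInterval}(a), and the observation (via the cube and Theorem \ref{thm:planarGcr2}) that no generic-rank argument can suffice, are also sound.

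The gap is everything after that, and it is the entire content of the conjecture: you must produce, for \emph{every} bipartite graph of minimum degree $\geq 3$ with $\gcr = 2$, a real partial matrix $M_0$ with no real rank-$2$ completion, certified by a strict inequality so that it persists on a neighborhood. Your proposed mechanism --- a sign condition on an alternating product of $2\times 2$ minors along a cycle --- is unsubstantiated, and the one fully worked example in the paper suggests it is too weak: for the cube (Example \ref{ex:4x4complete}) the obstruction to real rank-$2$ completability is the negativity of a degree-$8$ discriminant involving all twelve specified entries, found only after a full elimination-theoretic analysis, not a sign condition supported on a single cycle. There is also a genuine error en route: you claim that because $\gcr(G)=2$, the set of partial matrices admitting a real rank-$\leq 2$ completion is dense. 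It is full-dimensional, not dense; if it were Euclidean-dense its complement would have empty interior, which would make your own strategy (and the conjectured conclusion) impossible. Zariski density of the complex image does not give Euclidean density of the real image --- that distinction is exactly the real phenomenon the conjecture is about. In short: a reasonable framing of the problem, correct preliminary reductions, but no proof.
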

It is also reasonable to assume in addition (in Conjecture~\ref{conj:core}) that $G$ is planar. This would still lead to the resolution of Problem~\ref{ques:bipart}.\\

\noindent \textbf{Generic Ranks:}
We conjecture that all bipartite circulant graphs
of the form $G(n,l)$ (see Section \ref{sec:examples}) have generic completion rank predicted by the dimension count:
\begin{conj}
All graphs $G(n,l)$ have generic completion rank predicted by the dimension count (cf. Proposition \ref{prop:circulantIntegers}).
\end{conj}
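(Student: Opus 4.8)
The plan is to reduce the conjecture to the sufficient condition of Lemma~\ref{lem:gcrExpected} and then to verify that condition for every $G(n,l)$ by exploiting the cyclic symmetry. Write $\V_k$ for the determinantal variety of $n\times n$ complex matrices of rank at most $k$, and let $\pi_G\colon \V_k\to \mathcal{M}_G$ be the linear projection forgetting the unspecified entries. By definition the generic completion rank is the least $k$ for which $\pi_G$ is dominant, and Proposition~\ref{prop:dimcount} shows that the dimension-count value $k_0 = \min\{k : 2kn - k^2 \ge nl\}$ is a lower bound. It therefore suffices to prove that $\pi_G$ is dominant for $k = k_0$, i.e.\ that its differential is surjective at a single point; this is exactly what Lemma~\ref{lem:gcrExpected} packages. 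Concretely, at a matrix $X = AB^\top$ of rank $k_0$ with $A,B$ of size $n\times k_0$, the tangent space to $\V_{k_0}$ is $\{AY^\top + ZB^\top : Y,Z\in\C^{n\times k_0}\}$, and the goal is to show that its image under restriction to the edge coordinates of $G(n,l)$ is all of $\C^E$.

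First I would produce an explicit candidate point with the full cyclic symmetry. Since $G(n,l)$ is invariant under the shift $i\mapsto i+1$ on both parts, I would take $X$ to be an $n\times n$ circulant matrix of rank exactly $k_0$, obtained by choosing $k_0$ distinct $n$-th roots of unity $\omega_1,\dots,\omega_{k_0}$ as the nonzero Fourier frequencies and setting the columns of $A$ and $B$ to be the corresponding discrete exponential vectors $v_\omega = (1,\omega,\dots,\omega^{n-1})^\top$. With this choice the entire tangent-space-to-edge-coordinate map becomes equivariant for the $\Z/n\Z$-action.

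Next I would diagonalize. Conjugating by the discrete Fourier transform decomposes both $\C^E$ and the tangent space into isotypic components indexed by the characters of $\Z/n\Z$, so that the single surjectivity condition splits into one full-rank condition per Fourier frequency. Each such block is a small, explicitly structured matrix whose entries are Vandermonde-type expressions in the chosen roots of unity and in the connection set $\{0,1,\dots,l-1\}$ of the circulant. The task then becomes showing that, for a generic choice of the $\omega_j$, every block attains its expected rank; Propositions~\ref{prop:circulantIntegers} and~\ref{prop:noDiagonalGeneric} already treat subclasses where these blocks take a particularly transparent form, and the aim is to push that analysis to all $(n,l)$.

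The main obstacle is carrying out this block-by-block rank verification uniformly in $n$ and $l$. Two features make it delicate. First, $k_0$ depends on $(n,l)$ through a floor, so the inequality $2k_0 n - k_0^2 \ge nl$ can be tight or slack; when it is tight the differential must be not merely surjective but an isomorphism, forcing one to prove that a square structured determinant is nonzero rather than a mere rank inequality. Second, the sizes of the Fourier blocks and the linear dependencies among the vectors $v_{\omega_j}$ are controlled by gcd conditions between the frequencies and $n$, so a clean argument will need a number-theoretic input guaranteeing that a suitable set of roots of unity simultaneously avoids all degenerate configurations. Establishing such a uniform nonvanishing statement for the Fourier blocks is where I expect the real work to lie.
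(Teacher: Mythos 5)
The statement you are trying to prove is not a theorem of the paper at all: it is stated as an open conjecture (cf.\ Conjecture~\ref{conj:bipartiteCirculantExpected}), and the authors only establish the special cases in Propositions~\ref{prop:circulantIntegers} and~\ref{prop:noDiagonalGeneric}. Your proposal does not close it either. The setup is sound --- surjectivity of the differential of $\pi_G$ at a single smooth point of the determinantal variety does imply dominance, a rank-$k_0$ circulant built from $k_0$ distinct $n$-th roots of unity is a legitimate smooth point, and the $\Z/n\Z$-equivariance does split the surjectivity question into one rank condition per Fourier frequency. But the entire mathematical content of the conjecture is concentrated in exactly the step you defer: proving that, for every $(n,l)$, some choice of frequencies makes all the Vandermonde-type blocks attain full rank. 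You name this as ``where the real work will lie,'' which means the proposal is a strategy outline with an acknowledged hole, not a proof. No uniform nonvanishing statement of this kind is established in the paper or in your sketch, and it is not obviously true --- degeneracies among the blocks are governed by arithmetic of the frequencies modulo $n$, and nothing you write rules out bad configurations for all $(n,l)$ simultaneously.

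Two further inaccuracies in the reduction deserve correction. First, Lemma~\ref{lem:gcrExpected} does not ``package'' the general surjectivity condition: it applies only in the tight case $\#E = r(m+n-r)$, where surjectivity of the differential is equivalent to injectivity and is certified by the permutation-matrix structure of the coefficient matrix; for general $G(n,l)$ the count $nl$ is typically strictly smaller than $k_0(2n-k_0)$, so the lemma is inapplicable as stated. The paper handles a slack case (Proposition~\ref{prop:noDiagonalGeneric}) by \emph{adding} edges to reach a maximal graph and invoking Lemma~\ref{lem:projFullDim}, an augmentation step absent from your sketch; any complete argument along your lines would need either that device or a direct surjectivity (not injectivity) criterion for non-square blocks. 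Second, in the tight case your claim that the differential ``must be an isomorphism'' is the conclusion you are trying to prove, not an extra obstacle --- but it does mean a single structured determinant must be shown nonzero, and your proposal offers no mechanism for that beyond optimism about genericity of the $\omega_j$, which cannot be invoked since the point was chosen to be maximally symmetric rather than generic.
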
 

\noindent \textit{Acknowledgements:} This project strongly benefited from concurrent visits at the Max Planck Institute for Mathematics in the Sciences in Leipzig by the first and third author. The second and third author were partially supported by NSF grant DMS-1352073. The authors would like to thank Anton Leykin and Mateusz Micha\l ek.

\section{Preliminaries}\label{sec:foundations}
In this section, we introduce notions that are well known in the geometry of tensors 
in the context of matrix completion. Our setup here is the following: Let $G = (R\cup C, E)$ be a bipartite graph on parts $R$ and $C$ and edges $E\subset R\times C$. Throughout, we let $m$ be the cardinality of $R$ and $n$ the cardinality of $C$. Let $\M^{m\times n}(K)$ be the space of $m\times n$ matrices with entries in a field $K$, which for us is usually the field $\R$ of real numbers or the field $\C$ of complex numbers, whose rows are indexed by elements of $R$ and whose columns are indexed by elements of $C$.
We let $\M^{m\times n}_r(K) \subseteq \M^{m\times n}(K)$ denote the variety consisting of $m\times n$
$K$-matrices with rank at most $r$.
When the base field is clear from context, we may drop the $K$ from our notation.
We write $\pi_G$ for the coordinate projection from $\M^{m\times n}(K)$ to $K^E$ that takes a matrix $(a_{ij})$ to the vector $(a_{ij}\colon (i,j)\in E)$.
Elements of $K^E$ will be called \emph{$G$-partial matrices}
We think of elements in $K^E$ as partially specified matrices.
For example, the following matrix
\[
\begin{tikzpicture}
	\node at (-4,1){$\begin{pmatrix}
	a_{11} & ? & a_{13} \\
	a_{21} & a_{22} & ? \\
	? & a_{32} & a_{33}
	\end{pmatrix}$};
        \vertex(r1) at (0,2)[label=left:row 1]{};
        \vertex(r2) at (0,1)[label=left:row 2]{};
        \vertex(r3) at (0,0)[label=left:row 3]{};
        \vertex(c3) at (1,0)[label=right:col 3]{};
        \vertex(c2) at (1,1)[label=right:col 2]{};
        \vertex(c1) at (1,2)[label=right:col 1]{};
        \path
            (r1) edge (c1) edge (c3)
            (r2) edge (c1) edge (c2)
            (r3) edge (c2) edge (c3)
        ;
    \end{tikzpicture}
\]
is a representation as a $G$-partial matrix of an element of $K^E$ for the $6$-cycle, which is bipartite on two parts, each of size $3$.

\begin{defi}
Let $G = (R\cup C,E)$ be a bipartite graph.
The ($K$-)\emph{completion rank} of a $G$-partial matrix $M\in K^E$ is the smallest rank among all completions of $M$ with entries in $K$, i.e.~all matrices $A\in \M^{m\times n}(K)$ such that $\pi_G(A) = M$. For $K = \C$, we usually say complex completion rank. Analogously, we say real completion rank in the case $K=\R$.
\end{defi}

Completion rank is not rank with respect to variety, as investigated in \cite{Bernardi2018,BTMR3368091,LandsbergMR2865915}.
Although it may be tempting to believe that the completion rank of a $G$-partial matrix $X$
is the same as its rank with respect to the projection of the variety of rank 1 matrices
onto the coordinates indexed by $G$, this is not always the case - see Example \ref{ex:mr2gcr}.
Nevertheless, the notions of generic rank over $\C$ and typical ranks over $\R$ apply in this context.
Some elementary general results on generic and typical ranks carry over to our situation as well.

\begin{prop}[{\cite[Lemma~8]{KTTMR3417786}}]\label{prop:genericrank}
Let $G = (R\cup C,E)$ be a bipartite graph and suppose that $K$ is algebraically closed (e.g.~$K = \C$). Then there exists a unique integer $r$ that is the completion rank of almost all $G$-partial matrices.
Here, ``almost all'' means all $G$-partial matrices in the complement of a certain hypersurface in $K^E$.
\end{prop}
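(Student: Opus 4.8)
The plan is to realize the completion rank as a membership condition in a nested chain of projected determinantal varieties, and then to exploit irreducibility together with the fact that the image of a dominant morphism contains a dense open set.

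First I would recall that $\M^{m\times n}_r(K)$ is an irreducible Zariski-closed subvariety of $\M^{m\times n}(K)$, the classical determinantal variety cut out by the $(r+1)\times(r+1)$ minors, and that these fit into a nested chain $\M^{m\times n}_1 \subseteq \M^{m\times n}_2 \subseteq \cdots \subseteq \M^{m\times n}_{\min(m,n)} = \M^{m\times n}$. By definition, the completion rank of $M\in K^E$ is the smallest $r$ with $M\in\pi_G(\M^{m\times n}_r)$. Since $\pi_G$ is linear and hence a morphism of varieties, Chevalley's theorem guarantees that each image $\pi_G(\M^{m\times n}_r)$ is a constructible subset of $K^E$, and its Zariski closure $\overline{\pi_G(\M^{m\times n}_r)}$ is irreducible (being the closure of the image of an irreducible variety). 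These closures again form a nested chain whose top member is all of $K^E$, because every $G$-partial matrix has at least one completion, i.e. $\pi_G(\M^{m\times n}) = K^E$.

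Next I would let $r_0$ be the smallest index for which $\overline{\pi_G(\M^{m\times n}_{r_0})} = K^E$, and claim this $r_0$ is the completion rank of almost all $G$-partial matrices. For the upper bound, the restriction of $\pi_G$ to the irreducible variety $\M^{m\times n}_{r_0}$ is a dominant morphism onto $K^E$, so its image contains a dense Zariski-open subset $U_1$; every $M\in U_1$ then has completion rank at most $r_0$. For the lower bound, set $Z_r := \overline{\pi_G(\M^{m\times n}_r)}$; for each $r < r_0$ this is a proper closed subvariety, and by nestedness it suffices to consider $Z_{r_0-1}$. Any $M\notin Z_{r_0-1}$ lies outside $\pi_G(\M^{m\times n}_r)$ for every $r<r_0$ and hence has completion rank at least $r_0$. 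Thus on the dense open set $U_1\setminus Z_{r_0-1}$ the completion rank is exactly $r_0$.

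Finally, to match the precise phrasing of the statement, I would package the exceptional locus inside a single hypersurface: the complement of $U_1\setminus Z_{r_0-1}$ is a proper Zariski-closed subset of the affine space $K^E$, and any such subset is contained in the vanishing locus of one nonzero polynomial — a product of a defining equation of $Z_{r_0-1}$ with one for the closed complement of $U_1$. Uniqueness of $r$ is immediate, since two distinct integers cannot both be the completion rank on a common dense open set. The step I expect to require the most care is the passage from ``dominant morphism'' to ``image contains a dense open subset,'' which is exactly where algebraic closedness of $K$ enters; this is a standard consequence of Chevalley's theorem, but I would state it explicitly since it is the engine driving the entire argument.
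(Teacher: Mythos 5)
Your proposal is correct and follows essentially the same route as the paper's proof: both use Chevalley's theorem to see that each $\pi_G(\M^{m\times n}_j)$ is constructible, then exploit irreducibility of $K^E$ to conclude that the smallest $j$ for which this image is Zariski-dense gives the generic completion rank, with the dense image containing a dense open set. Your writeup is somewhat more detailed than the paper's (in particular, you explicitly justify the ``contained in a hypersurface'' phrasing, which the paper leaves implicit), but the underlying argument is the same.
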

\begin{proof}
The projection $\pi_G$ restricted to $\M_j^{m\times n}$ gives a morphism from $\M_j^{m\times n}$ to $K^E$. So the image $\pi_G(\M_j^{m\times n})$ is a constructible subset of $K^E$ by Chevalley's Theorem \cite[Exercise~3.19]{HarMR0463157}. If it is Zariski-dense in the image, it contains a Zariski-open set \cite[Exercise~3.18]{HarMR0463157}. Since $K^E$ is irreducible, the image of $\M_j^{m\times n}$ under $\pi_G$ is either of dimension less than $\# E$ or it is Zariski-dense in $K^E$. So the smallest $j$ such that $\pi_G(\M_j^{m\times n})$ is Zariski-dense in $K^E$ is the integer $r$ that we are looking for.
\end{proof}

\begin{rem}\label{rem:differentialGCR}
By generic smoothness of algebraic morphisms, the generic rank of the differential is equal to the dimension of the image of the morphism. Applied to our situation, this means that the generic completion rank of a bipartite graph $G = ([m]\times[n],E)$ is the smallest $r$ such that the projection $\pi_G$ of the tangent space to the variety of $m\times n$ matrices of rank at most $r$ at a generic point $A$ is surjective.
\end{rem}

\begin{defi}
We call the integer $r$ for $K = \C$ of the previous proposition~\ref{prop:genericrank} the \emph{generic completion rank} of the bipartite graph $G$. We write $\gcr(G)$ for the generic completion rank.
\end{defi}

\begin{prop}\label{prop:dimcount}
Let $G = (R\cup C,E)$ be a bipartite graph.
Then the following are both lower bounds for the generic completion rank of $G$:
\begin{enumerate}
	\item the smallest $k$ such that $k(m+n) - k^2 \geq \# E$
	\item the largest $r$ such that $G$ has $K_{r,r}$ as a subgraph.
\end{enumerate}
\end{prop}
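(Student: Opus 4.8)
The plan is to prove the two bounds separately, using the characterization of $\gcr(G)$ from Proposition~\ref{prop:genericrank}: it is the smallest $j$ for which the image $\pi_G(\M_j^{m\times n})$ is Zariski-dense in $K^E$, equivalently has dimension $\# E$. For bound (1), the key input is the classical dimension formula $\dim \M_k^{m\times n} = k(m+n) - k^2$, which I would justify by writing a rank-$k$ matrix as $UV^\top$ with $U \in \M^{m\times k}$ and $V \in \M^{n\times k}$ and accounting for the $\gl_k$ redundancy. Since $\pi_G$ is a linear projection, $\dim \pi_G(\M_k^{m\times n}) \le \dim \M_k^{m\times n} = k(m+n) - k^2$, so whenever $k(m+n) - k^2 < \# E$ the image cannot be dense, and such $k$ lies strictly below $\gcr(G)$. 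I would then observe that $k \mapsto k(m+n) - k^2$ is increasing on the only relevant range $0 \le k \le \min(m,n) \le (m+n)/2$, so the values of $k$ failing the inequality form an initial segment; letting $k_0$ be the smallest $k$ with $k(m+n) - k^2 \ge \# E$, every $k < k_0$ fails the inequality and hence lies below $\gcr(G)$, giving $\gcr(G) \ge k_0$.

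For bound (2), a copy of $K_{r,r}$ in $G$ picks out row indices $i_1,\dots,i_r$ and column indices $j_1,\dots,j_r$ with every entry $(i_a,j_b)$ specified; these form a fully specified $r\times r$ submatrix. The determinant of this submatrix is a polynomial in the corresponding coordinates of $K^E$ that is not identically zero (its variables are distinct coordinates), so it is nonvanishing off a hypersurface. For a $G$-partial matrix $M$ outside this hypersurface the submatrix is invertible, and since every completion $A$ of $M$ restricts to it, $\rank A \ge r$; thus the completion rank of such $M$ is at least $r$. Because the generic completion rank is attained off a hypersurface as well, intersecting the two dense open subsets of $K^E$ yields a $G$-partial matrix witnessing $\gcr(G) \ge r$.

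The arguments are both short, and neither constitutes a serious obstacle. The two points that demand care are the monotonicity remark in (1) — needed so that ``the smallest $k$'' genuinely separates the dense from the non-dense range rather than being confused by the concavity of $k(m+n)-k^2$ — and, in (2), the clean intersection of the two generic conditions together with the observation that the determinant polynomial really is nonzero. Both become immediate once the Zariski-density characterization of $\gcr$ is in hand and one notes that a linear projection cannot increase dimension.
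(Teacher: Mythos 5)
Your proof is correct and follows essentially the same route as the paper: a dimension count via $\dim \M_k^{m\times n} = k(m+n)-k^2$ and the fact that a linear projection cannot raise dimension for bound (1), and the fully specified $r\times r$ submatrix coming from a $K_{r,r}$ subgraph for bound (2). The details you add (the $UV^\top$ parametrization, the generic nonvanishing of the specified minor, and the monotonicity remark, which is in fact unnecessary since ``smallest $k$'' already forces all smaller $k$ to fail the inequality) are points the paper leaves implicit or cites away.
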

\begin{proof}
The first lower bound follows from dimension theory in algebraic geometry.
The dimension of $\M_k^{m\times n}$,
i.e.~the set of matrices of rank at most $k$ as before, is $k(m+n) - k^2$
(see e.g.~\cite[Proposition~12.2]{HarMR1416564}) and therefore,
the dimension of the image of $\M_r^{m\times n}$ under $\pi_G$ has dimension at most $k(m+n)-k^2$.
In order for it to be dense in $K^E$,
we need $\dim(K^E) = \# E = \dim(\pi_G(\M_r^{m\times n}))\leq k(m+n) - k^2$ by \cite[Theorem~11.12]{HarMR1416564}.

The second lower bound follows by noting that
a $K_{r,r}$ subgraph of $G$ corresponds
to a completely specified $r\times r$ submatrix of any $G$-partial matrix.
\end{proof}

A phenomenon that is specific to the field of real numbers is the existence of typical ranks. 

\begin{defi}
We call $r$ a \emph{typical completion rank} of a bipartite graph $G$ if the set of points in $\R^E$ that have real completion rank $r$ has non-empty interior in the Euclidean topology.
\end{defi}

We will see examples below showing that a bipartite graph can have several typical completion ranks. The difference compared to the generic rank in the complex case is caused by the fact that Chevalley's Theorem does not hold for real algebraic sets. It must be substituted by Tarski's quantifier elimination.

\begin{rem}
We may reinterpret typical ranks from a probabilistic point of view. If we fix a ``nice'' probability measure on $\R^E$ (e.g.~measures that have a continuous and positive density with respect to the Lebesgue measure), then the typical ranks of $G = (R\cup C,E)$ are exactly the real completion ranks that
occur with positive probability.

Since the rank of a matrix is invariant under scaling, we can also consider probability distributions on the unit sphere in $\R^E$, which is compact. Again, the typical ranks are exactly the ranks that occur with positive probability for measures that have a continuous and positive density with respect to the Haar measure on the unit sphere.
\end{rem}

The analogue of the following statement in the context of ranks in projective geometry was proved in \cite[Theorem~1.1]{Bernardi2018}.
\begin{prop}\label{prop:typicalRanksInterval}
Let $G = (R\cup C,E)$ be a bipartite graph.
\begin{enumerate}[(a)]
\item The smallest typical completion rank of $G$ is the generic completion rank of $G$.
\item If $r_1<r_2$ are typical completion ranks of $G$, then so is every $r$ such that $r_1\leq r\leq r_2$.
\end{enumerate}
\end{prop}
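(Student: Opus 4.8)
My plan is to sandwich. For the lower bound, suppose some $r<\gcr(G)$ were typical, so a Euclidean-open set $\Omega\subseteq\R^E$ has real completion rank $r$. Then $\Omega\subseteq\pi_G(\M^{m\times n}_r(\R))\subseteq\pi_G(\M^{m\times n}_r(\C))$, and by Proposition~\ref{prop:genericrank} the latter is not Zariski-dense in $\C^E$, hence is contained in a proper Zariski-closed subset. But a nonempty Euclidean-open subset of $\R^E$ is Zariski-dense in $\R^E$, so its complexification is Zariski-dense in $\C^E$ — a contradiction. For the reverse inequality I show $\gcr(G)$ is itself typical. By Remark~\ref{rem:differentialGCR} the differential $d\pi_G$ is surjective at a generic point of $\M^{m\times n}_{\gcr(G)}(\C)$; since this determinantal variety is irreducible, defined over $\Q$, and has smooth real points (any real matrix of rank exactly $\gcr(G)$), its real points are Zariski-dense, so the relevant maximal Jacobian minor is nonzero at some real $A$ of rank $\gcr(G)$. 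The real implicit function theorem then gives a Euclidean-open neighborhood $W$ of $\pi_G(A)$ inside $\pi_G(\M^{m\times n}_{\gcr(G)}(\R))$, on which every point has completion rank $\le\gcr(G)$. Removing the proper real-algebraic locus where the rank drops (which, by the lower-bound argument, is nowhere dense) leaves a nonempty open subset of completion rank exactly $\gcr(G)$.

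\textbf{Part (b): reformulation and reduction.}
Set $V:=\pi_G(\M^{m\times n}_1(\R))$, the image of the rank-one matrices, and write $U_r:=\{v_1+\cdots+v_r : v_i\in V\}$. Since $\M^{m\times n}_r$ is exactly the set of sums of $r$ rank-one matrices and $\pi_G$ is linear, one has $U_r=\pi_G(\M^{m\times n}_r(\R))$, so the real completion rank of $M$ is the least $r$ with $M\in U_r$. Thus the statement is precisely the analogue, for rank with respect to the (possibly non-closed, irreducible) set $V$, of the interval theorem \cite[Theorem~1.1]{Bernardi2018}, and I would adapt that argument. Because $0\in V$ we have $U_{r-1}\subseteq U_r$, so it suffices to prove the single downward step: if $r>\gcr(G)$ is typical and $r-1\ge\gcr(G)$, then $r-1$ is typical. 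Iterating this from the maximal typical rank down to $\gcr(G)$ fills in every intermediate value and, together with part (a), yields the full interval.

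\textbf{The downward step.}
Let $p$ lie in a nonempty open set $\Omega$ of completion rank exactly $r$, and write $p=v_1+\cdots+v_r$ with $v_i\in V$ coming from rank-one matrices whose sum has rank exactly $r$. First a clean observation: the partial sum $p':=v_1+\cdots+v_{r-1}\in U_{r-1}$ has completion rank exactly $r-1$, for if $p'\in U_{r-2}$ then $p=p'+v_r\in U_{r-1}$, contradicting $\operatorname{crk}(p)=r$. The goal is then to show that $p'$ can be chosen so that a whole neighborhood of it has completion rank exactly $r-1$. For this I would consider the sum map $\sigma\colon V^{\,r}\to\R^E$ and its truncation $\sigma_{r-1}\colon V^{\,r-1}\to\R^E$; because $\Omega\subseteq\operatorname{int}(U_r)$ is full-dimensional, the decomposition of $p$ may be chosen generic so that $d\sigma$ is surjective, and the aim is to arrange that $d\sigma_{r-1}$ is already surjective at $(v_1,\dots,v_{r-1})$, placing $p'$ in $\operatorname{int}(U_{r-1})$; intersecting with the complement of the nowhere-dense locus of completion rank $\le r-2$ then exhibits $r-1$ as typical.

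\textbf{Main obstacle.}
The hard part is exactly this last surjectivity/full-dimensionality claim. Since $r-1>\gcr(G)$ is possible, a \emph{generic} point of $U_{r-1}$ has completion rank $\gcr(G)<r-1$, so the completion-rank-$(r-1)$ locus, while full-dimensional, is non-generic, and one cannot simply invoke dominance of $\sigma_{r-1}$. Compounding this, $V$ and the sets $U_k$ need not be closed — this is the border-rank phenomenon flagged before Example~\ref{ex:mr2gcr}, and it is precisely what obstructs naive semicontinuity or connectedness arguments in $\R^E$. My plan is to follow \cite{Bernardi2018}: carry out the argument in the space of decompositions, where the smooth real locus of the rank-one variety is well-behaved, use a transversality/generic-smoothness argument to guarantee that degenerating one summand sweeps out a full-dimensional family, and invoke semialgebraic connectedness to propagate the ``exactly $r-1$'' conclusion across $\Omega$. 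I expect verifying that this swept family genuinely has completion rank $r-1$ rather than dropping further — the place where the non-closedness of $V$ must be controlled — to be the crux of the proof.
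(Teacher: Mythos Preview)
Your Part (a) is correct; it unpacks the paper's one-line dimension argument (that $\dim\pi_G(\M_r^{m\times n}(\R))$ equals the dimension of its complex Zariski closure, \cite[Proposition~2.8.2]{BCRMR1659509}) into explicit lower- and upper-bound halves.

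Your Part (b), however, is headed in a harder direction than necessary, and the obstacle you flag is genuine: the truncation $p'=v_1+\cdots+v_{r-1}$ need not land at a point where $\pi_G|_{\M_{r-1}}$ is submersive, so you cannot immediately produce an open neighborhood of completion rank $r-1$, and pushing through via the decomposition space as in \cite{Bernardi2018} is real work. The paper sidesteps this entirely by reversing the direction of the induction. Rather than proving the downward step ``$r$ typical $\Rightarrow$ $r-1$ typical,'' it proves the contrapositive upward step ``$r+1$ not typical $\Rightarrow$ $r+m$ not typical for every $m\ge 1$,'' and then contradicts the typicality of $r_2$. The gain is that ``not typical'' is a statement about \emph{generic} behavior: if $r+1$ is not typical then $\pi_G(\M_{r+1}^{m\times n}(\R))\setminus\pi_G(\M_r^{m\times n}(\R))$ is lower-dimensional, so a \emph{generic} real matrix $A$ of rank $r+1$ already has $\pi_G(A)\in\pi_G(\M_r^{m\times n}(\R))$. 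Since a generic rank-$(r+m)$ matrix decomposes as a generic rank-$(r+m-1)$ matrix plus a rank-$1$ matrix, one iterates. At every step the argument stays in the generic locus of the relevant determinantal variety, so the non-closedness of your sets $U_k$ and the possible failure of submersivity at a specific, non-generic completion --- precisely what blocks your downward argument --- never arise.
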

\begin{proof} 
The dimension of $\pi_G(\M_r^{m\times n}(\R))$ is equal to the dimension of its Zariski-closure in $\C^E$ \cite[Proposition~2.8.2]{BCRMR1659509}.
Thus part (a) follows.

To show part (b), let $r_1 \le r < r_2$ and assume $r+1$ is not typical.
Then there exists a matrix $A'\in\M^{m\times n}(\R)$ of rank $r$
with $\pi_G(A) = \pi_G(A')$ for a generic real matrix $A\in\M^{m\times n}(\R)$ of rank $r+1$ because
$\dim(\pi_G(\M^{m\times n}_{r+1})\setminus \pi_G(\M^{m\times n}_{r}))<\dim(\pi_G(\M^{m\times n}_{r+1}))$.
Since a generic matrix $A\in\M^{m\times n}(\R)$ of rank $r+m$ can be written as $A = A_1 + A_2$ with generic matrices $A_1,A_2\in\M^{m\times n}(\R)$ satisfying $\rk(A_1) = r+m-1$, and $\rk(A_2) = 1$, we can proceed by induction on the rank to show that $r+m$ is not typical for any $m\geq 1$,
which contradicts the fact that $r_2>r$ is typical.
\end{proof}

The \emph{maximal rank} of a bipartite graph $G$ is the maximum completion rank of any $G$-partial matrix (which usually occurs in a Zariski-thin set). Proposition \ref{prop:maxTwiceGeneric} says that this is bounded above by twice the generic completion rank. At first glance, this seems like a special case of Theorem 1 in \cite{BTMR3368091}, but it is not quite because coordinate projections of the variety of rank-one matrices may not be Zariski closed, see Example~\ref{ex:mr2gcr}.
However, the proof is the same simple geometric argument that applies for both the complex and real case. 

\begin{prop}\label{prop:maxTwiceGeneric}
Let $G = (R\cup C,E)$ be a bipartite graph.
\begin{enumerate}[(a)]
\item The maximal complex completion rank of a $G$-partial matrix in $\C^E$ is at most twice the generic completion rank of $G$.
\item The maximal real completion rank of a $G$-partial matrix in $\R^E$ is at most twice the minimal typical rank of $G$.
\end{enumerate}
\end{prop}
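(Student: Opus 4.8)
The plan is to prove both parts with the same argument, which is a simple completion-by-addition idea. Let me first set up the statement I want to prove for part (a), then adapt it for part (b).

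For part (a), let $r = \gcr(G)$ and let $M \in \C^E$ be any $G$-partial matrix; I want to find a completion of rank at most $2r$. First I would complete $M$ \emph{arbitrarily} to some full matrix $B \in \M^{m\times n}(\C)$, with $\pi_G(B) = M$. The key point is that by the definition of generic completion rank, a \emph{generic} $G$-partial matrix has a completion of rank $r$; I want to transfer this to the specific $M$. The cleanest way is to use the fact that $\pi_G(\M_r^{m\times n}(\C))$ is Zariski-dense in $\C^E$ (Proposition~\ref{prop:genericrank}), so it contains a Zariski-open dense set $U$. Now consider the partial matrix $M' := M - \pi_G(A)$ for a generic $A \in \M_r^{m\times n}(\C)$; since $\pi_G$ is linear and $U$ is dense, I can choose $A$ of rank $r$ so that $M' = \pi_G(B - A)$ also lies in $U$, i.e.\ $M'$ has a rank-$r$ completion $C$ with $\pi_G(C) = M'$. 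Then $A + C$ completes $M$ (because $\pi_G(A+C) = \pi_G(A) + M' = \pi_G(A) + M - \pi_G(A) = M$) and has rank at most $\rk(A) + \rk(C) \le 2r$, using subadditivity of rank. This gives the bound.

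The main obstacle, and the step I would be most careful about, is justifying that $M'$ lands in the dense set $U$ for a suitable choice of rank-$r$ matrix $A$. The subtlety is that we are not choosing $A$ fully generically in $\M_r^{m\times n}(\C)$ and then asking about $M'$; rather we need $\pi_G(B) - \pi_G(A)$ to avoid a fixed hypersurface. Since $\pi_G$ restricted to $\M_r^{m\times n}$ is dominant, its image contains $U$, and translating the open dense set $U$ by the fixed vector $\pi_G(B)$ still gives an open dense set; so the set of $A \in \M_r^{m\times n}$ with $\pi_G(A) \in \pi_G(B) - U$ is open and nonempty, hence I can pick such an $A$ that additionally has rank exactly $r$ (the rank-$r$ locus is dense in $\M_r^{m\times n}$). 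This produces the rank-$r$ completion $C$ of $M'$, and the argument closes.

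For part (b) the identical argument runs over $\R$, with one change: I would replace ``generic completion rank'' by the \emph{minimal typical rank} of $G$, call it $r_0$. By Proposition~\ref{prop:typicalRanksInterval}(a) this equals $\gcr(G)$, but the relevant fact is that the set of real $G$-partial matrices with a rank-$r_0$ completion has nonempty Euclidean interior, hence contains a Euclidean-open dense (in a full-dimensional component) set $U_{\R}$; density in the Euclidean sense is exactly what lets me translate by $\pi_G(B)$ and still find a rank-$r_0$ real matrix $A$ with $\pi_G(B) - \pi_G(A) \in U_{\R}$. The rest is the same: the rank-$r_0$ real completion $C$ of $M' = \pi_G(B) - \pi_G(A)$ yields the real completion $A + C$ of $M$ of rank at most $2r_0$. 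The only care needed over $\R$ is that a Euclidean-dense, rather than Zariski-dense, statement suffices for the translation argument, which it does since we are only asserting existence of one good $A$.
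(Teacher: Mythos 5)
Your part (a) is correct, and it takes a genuinely different (though related) route from the paper: you complete $M$ arbitrarily, then use Zariski density of $\pi_G(\M^{m\times n}_r(\C))$ to translate and find a rank-$r$ matrix $A$ with $M-\pi_G(A)$ again in the image, closing with subadditivity of rank. The paper instead intersects the line through $M$ and a generic point $M'$ with the completable locus and takes a linear combination of two completions. Over $\C$ both work equally well, because the completable locus really is Zariski dense there.

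Part (b), however, has a genuine gap. Your key step --- ``density in the Euclidean sense is exactly what lets me translate'' --- requires the set $S$ of real $G$-partial matrices admitting a rank-$r_0$ real completion to be Euclidean dense in $\R^E$, and this is false in general: non-density of $S$ is precisely the phenomenon of multiple typical ranks that this paper studies. Concretely, for the cube graph of Example~\ref{ex:4x4complete} ($4\times 4$ matrices with unspecified diagonal, $r_0=2$), the paper exhibits an explicit partial matrix $A$ such that an entire Euclidean neighborhood of $A$ has no real rank-$2$ completion; $S$ misses this open ball. Hence for suitable $M$ the translated open set $M-U_{\R}$ can lie entirely inside the region of typical rank $3$, and then no real matrix of rank at most $r_0$ projects into $M-U_{\R}$, so your scheme produces nothing. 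What actually saves the statement is not density but the fact that $S$ is a \emph{cone}: rank is invariant under scaling, so $S$ is closed under multiplication by arbitrary real scalars. This is exactly what the paper's line argument exploits: the line $L$ through $M$ and an interior point $M'$ of $S$ meets $S$ in a set with nonempty interior in $L$, any two distinct points $M_1\neq M_2$ of that set affinely span $L$, so $M=aM_1+bM_2$ with real (possibly very large) coefficients $a,b$, and $aA_1+bA_2$ completes $M$ with rank at most $2r_0$. The large coefficients are what a translation-by-a-fixed-vector argument cannot produce. Your approach can be repaired in the same spirit --- for $u\in U_{\R}$ and $\lambda$ large, openness of $U_{\R}$ gives $u-M/\lambda\in U_{\R}$, hence $M-\lambda u=-\lambda\left(u-M/\lambda\right)\in S$ and $M=(M-\lambda u)+\lambda u\in S+S$ --- but this is a scaling argument, not a density argument, and that distinction is the entire content of the real case.
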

\begin{proof}
The argument for both cases is essentially the same. Let $M$ be a $G$-partial matrix, with real or complex entries. Choose an interior point $M'$ in the set of $G$-partial matrices with complex or real completion rank equal to $\gcr(G)$ and write $r = \gcr(G)$. In the complex case, open refers to the Zariski topology and existence is guaranteed by Proposition~\ref{prop:genericrank}. In the real case, we use the Euclidean topology and existence is guaranteed by definition of typical rank. Consider the line $L$ spanned by $M$ and $M'$. Then this line has a spanning set of two points $M_1$ and $M_2$ with completion rank $r$ because the intersection of the line with the set of points of completion rank $r$ is a subset of $L$ with non-empty interior. Fix completions $A_1$ and $A_2$ of $M_1$ and $M_2$ of rank $r$. Then the appropriate linear combination of $A_1$ and $A_2$ is a completion of $M$ and has rank at most $2r$.
\end{proof}

\begin{exm}\label{ex:mr2gcr}
Let $T_n$ denote the bipartite graph corresponding to partial matrices where the known entries are precisely those on and below the diagonal.
Theorem \ref{thm:bipartitechordal} implies that $\gcr(T_n) = \left\lceil\frac{n}{2}\right\rceil$
(see also \cite[Theorem~2.2]{woerdeman2007matrix}),
which is the maximal size of a specified submatrix in a $T_n$-partial matrix.
Consider the $T_n$-partial matrix $M_n$
\[
	M_n = \begin{pmatrix}
	    1 & ? & ? & \dots & ? & ?\\
	    0 & 1 & ? & \dots & ? & ?\\
	    0 & 0 & 1 & \dots & ? & ?\\
	    \vdots & \vdots & \vdots & \ddots & \vdots & \vdots\\
	    0 & 0 & 0 & \dots & 0 & 1
	\end{pmatrix}
\]
with known entries corresponding to $T_n$. Any completion of $M_n$ will have determinant equal to $1$. Therefore, the maximum completion rank of $T_n$ is $n$.
This example shows that the bound provided in Proposition~\ref{prop:maxTwiceGeneric} for the maximum completion rank to be twice the generic completion rank is sharp.
Although the completion rank of $M$ is $n$,
it is the limit of a sequence of $T_n$-partial matrices with generic completion rank $1$. Explicitly, consider the sequence $(A_k)_{k\in\N}$ of $T_n$-partial matrix where the $(i,j)$th entry of $A_k$ is $(2^{j-i})^k$ (where $i\leq j$). This sequence of $T_n$-partial matrices converges to $M_n$ and each one has a completion of rank $1$, namely the complete matrix $( (2^{j-i})^k)_{i,j}$ -- the sequence of completed matrices is, of course, not convergent.
This argument shows that the rank of $M_n$ with respect to the Zariski closure of the projection of rank $1$ matrices is $1$.
\end{exm}

\section{Example and Computational Tools}\label{sec:computations}

In this section we discuss an elementary example of $4\times 4$ matrices with unspecified diagonal.
We use it to showcase some of the computational tools and the difficulties in proving existence of several typical ranks.
We also present a randomized algorithm for computing generic rank (Algorithm \ref{alg:gcr}). Let $G = ([4],[4],E)$ be the bipartite graph
obtained by removing a perfect matching from $K_{4,4}$.
Up to relabeling of rows and columns, the unknown entries of the corresponding partial matrices 
are the diagonal entries.

\begin{exm}[The $4\times 4$ missing diagonal]\label{ex:4x4complete}
We focus on the $4\times 4$ case with unspecified diagonal, i.e.~partial matrices of the following
form, corresponding to the graph of the cube
\[
	\begin{pmatrix}
	? & a_{12} & a_{13} & a_{14} \\
	a_{21} & ? & a_{23} & a_{24} \\
	a_{31} & a_{32} & ? & a_{34} \\
	a_{41} & a_{42} & a_{43} & ?
	\end{pmatrix} \qquad\qquad
	\begin{tikzpicture}[baseline=7.5ex]
	    \vertex (a) at (0.5,2.5){};
	    \vertex (b) at (2.5,2.5){};
	    \vertex (c) at (2.5,0.5){};
	    \vertex (d) at (0.5,0.5){};
	    \vertex (e) at (1.15,1.85){};
	    \vertex (f) at (1.85,1.85){};
	    \vertex (g) at (1.85,1.15){};
	    \vertex (h) at (1.15,1.15){};
	    \path
	    	(a) edge (b) edge (e) edge (d)
	    	(b) edge (c) edge (f)
	    	(c) edge (d) edge (g)
	    	(d) edge (h)
	    	(e) edge (f)
	    	(f) edge (g)
	    	(g) edge (h)
	    	(h) edge (e)
	    ;
	\end{tikzpicture}.
\]
Since the determinant is multilinear and not identically constant in the unknown entries,
we see that every such partial matrix has a completion of rank at most $3$.
So we describe exactly which partial matrices have complex completion rank $1$, $2$, and $3$.
Then we discuss the typical rank behavior over the reals.
We start with (complex) rank $1$ (see also \cite{kahle2017geometry} for the general rank $1$ case).
The Zariski closure of the projection of the variety $\M^{4\times 4}_1$ of matrices of rank $1$
onto the entries specified by $G$
is defined by the elimination ideal, which is generated by the polynomials
\begin{eqnarray*}
& a_{13}a_{42}-a_{12}a_{43}, \ \ a_{32}a_{41}-a_{31}a_{42}, \ \ a_{23}a_{41}-a_{21}a_{43}, \\
& a_{14}a_{32}-a_{12}a_{34}, \ \ a_{24}a_{31}-a_{21}a_{34}, \ \  a_{14}a_{23}-a_{13}a_{24}, \ \ a_{23}a_{34}a_{42}-a_{24}a_{32}a_{43}, \\
& a_{13}a_{34}a_{41}-a_{14}a_{31}a_{43}, \ \  a_{12}a_{24}a_{41}-a_{14}a_{21}a_{42}, \ \ a_{12}a_{23}a_{31}-a_{13}a_{21}a_{32}.
\end{eqnarray*}
The first six generators are the completely specified $2\times 2$ minors and the other four cubic generators express the condition that the $2\times 2$ minors of the completion involving a diagonal entry vanish simultaneously.
However, the image of $\M^{4\times 4}_1$ under this projection is not closed, it is only a constructible set. The elimination ideal defines its Zariski closure.

To compute the Zariski closure of the set $\cl(\pi_G(\M^{4\times 4}_1))\setminus \pi_G(\M^{4\times 4}_1)$ - which we call the exceptional locus - we use the Extension Theorem \cite[Chapter 3, Theorem~3]{CLOMR3330490} (see also \cite{CLOMR3330490}, Chapter 3, Paragraph 2 and Chapter 8, Paragraph 5 for a more detailed description of the necessary computations). We compute a Gr\"obner basis of $I(M^{4\times 4}_1)$ with respect to an elimination order for the diagonal entries $a_{11},a_{22},a_{33},a_{44}$.
We then look at the leading coefficients. Since we eliminate several variables at the same time, we need to use the Extension Theorem iteratively, one variable at a time, or use an ad-hoc argument, which is easier in this case, because the leading coefficients turn out to be $a_{ij}$. By the Extension Theorem, a partial matrix satisfying the equations in the elimination ideal might not have a completion of rank $1$ only if one of its entries is equal to $0$. If this is the case, the entire row or column of the partial matrix must be zero in order to have a completion of rank $1$, which follows from the usual parameterization of 
$\M^{4\times 4}_1$ as $\M^{4\times 4}_1 = \{vw^t\colon v,w\in \C^4\}$.

Up to permutation of rows and columns, there is only one case: We can assume that $a_{12} = 0$. Adding this ideal to the elimination ideal leads to four irreducible components. Two of them correspond to what we expect, namely the first row or the second column being zero. The other two, however, are linear spaces corresponding to partial matrices of the form
\begin{eqnarray*}
\left(\begin{array}[]{cccc}
? & 0 & 0 & a_{14} \\
0 & ? & 0 & a_{24} \\
0 & 0 & ? & a_{34} \\
a_{41} & a_{42} & a_{43} & ?
\end{array}\right)
& \text{ or }
\left(\begin{array}[]{cccc}
? & 0 & a_{13} & 0 \\
0 & ? & a_{23} & 0 \\
a_{31} & a_{32} & ? & a_{34} \\
0 & 0 & a_{43} & ?
\end{array}\right).
\end{eqnarray*}
A generic matrix from either linear space satisfies the equations in the elimination ideal but they do not have completions of rank $1$. So these linear spaces are irreducible components of the exceptional locus. In total, there are four such linear spaces (one for each diagonal entry). 

All these matrices in the exceptional locus have completions of rank at most $2$. For this, consider a partial matrix as on the left.
Let $A$ denote the completion where we set $a_{11}=a_{22}=a_{33}=a_{44}=0$.
Then the resulting matrix equation $Ax = 0$
imposes only one linear condition on the three-dimensional vector space $(x_1,x_2,x_3,0)$.
This shows that $A$ has a kernel of dimension at least $2$.

The generic completion rank of this completion problem is $2$. In other words, the Zariski closure of the projection of the variety $\M^{4\times 4}_2$ of matrices of rank at most $2$ is equal to the image space, which is equivalent to saying that the elimination ideal is the unit ideal. Using the Extension Theorem iteratively this time, we compute the Zariski closure of the complement of the image. 
Looking at a Gr\"obner basis of the ideal of $3\times 3$ minors with respect to an elimination order of the diagonal entries with $a_{44}$ being the last, there is only one leading coefficient of the last variable $a_{44}$, namely the cubic $c_4$ below. In fact, the Gr\"obner basis contains four polynomials whose leading monomial involves only one diagonal entry $a_{ii}$ and each coefficient is a cubic $c_i$. The four cubics are
\begin{eqnarray*}
c_1 & = & a_{12}a_{24}a_{41} - a_{14}a_{21}a_{42} \\
c_2 & = & a_{13}a_{34}a_{41} - a_{14}a_{31}a_{43} \\
c_3 & = & a_{23}a_{34}a_{42} - a_{24}a_{32}a_{43} \\
c_4 & = & a_{12}a_{23}a_{31} - a_{13}a_{21}a_{32}.
\end{eqnarray*}
By the Extension Theorem, we cannot lift with respect to $a_{44}$ only if $c_4$ vanishes. Vanishing of this cubic does not yet describe an irreducible component of the exceptional locus. Now there are two possibilities: Either such an irreducible component comes from not being able to fill in $a_{44}$ and another diagonal entry consistently or such an irreducible component comes from further restrictions on the given entries causing problems for the diagonal entry $a_{44}$. 
To analyze the second case, we compute a Gr\"obner basis of the ideal of $3\times 3$ minors plus the cubic $c_4$ with respect to an elimination order. We find a new element of the Gr\"obner basis with leading term $g_4 a_{44}$ for a quartic polynomial $g_4$ in the off-diagonal entries. Now we can check that the prime ideal $\langle c_4,g_4\rangle$ defines an irreducible component of the exceptional locus. The polynomial $g_4$ is 
\[
a_{13}a_{24}a_{32}a_{41}+a_{12}a_{23}a_{34}a_{41}-a_{14}a_{23}a_{31}a_{42}-a_{13}a_{21}a_{34}a_{42}+a_{12}a_{24}a_{31}a_{43}-a_{14}a_{21}a_{32}a_{43}.
\]
By symmetry, for the other three diagonal entries, we find three more irreducible components with defining prime ideal $\langle c_i,g_i\rangle$. They have codimension $2$ and degree $12$.

To analyze the first case, looking directly at leading forms in the initial Gr\"obner basis, another cubic coefficient $c_i$ has to vanish. We check that this leads to another irreducible component of the exceptional locus. So by symmetry, we find six irreducible components, one for each of the six pairs $c_i,c_j$ of cubics. 
The variety cut out by such a pair is reducible with two irreducible components, both of codimension $2$. The relevant irreducible component, which is contained in the exceptional locus, is defined by the two cubics and a quartic. For example, the following ideal defines an irreducible component of the exceptional locus:
\[
\langle c_1,c_2,a_{12} a_{24} a_{31} a_{43} - a_{13} a_{21} a_{34} a_{42} \rangle.
\]
These six irreducible components have codimension $2$ and degree $8$.

In total, we find that the exceptional locus for rank $2$ has ten irreducible components, which come in two types. One type contributes six irreducible components, one for every two out of the four cubics $c_1,c_2,c_3,c_4$. The other contributes four irreducible components whose prime ideal is generated by a cubic and a quartic.

We now discuss the typical ranks of $G$.
Since $\gcr(G) = 2$, Proposition \ref{prop:typicalRanksInterval} tells us that $2$ is a typical rank of $G$.
Theorem \ref{thm:maxTypicalBound} shows that $4$ is not a typical rank of $G$.
However, $3$ is a typical rank of $G$ as we now show.
If we consider the projection of the variety of rank $2$ matrices on the coordinates
\[
	\begin{pmatrix}
	a_{11} & a_{12} & a_{13} & a_{14} \\
	a_{21} & ? & a_{23} & a_{24} \\
	a_{31} & a_{32} & ? & a_{34} \\
	a_{41} & a_{42} & a_{34} & ?
	\end{pmatrix}
\]
i.e.~we project away the three diagonal entries $a_{22},a_{33},a_{44}$, then the image will be a hypersurface whose equation has degree $2$ in $a_{11}$. This polynomial is most compactly expressed via determinants as follows
\[
	\det\begin{pmatrix}
			a_{11} & a_{13} \\
			a_{21} & a_{23}
	\end{pmatrix}
	\det\begin{pmatrix}
		a_{11} & a_{12} & a_{14} \\
		a_{31} & a_{32} & a_{34} \\
		a_{41} & a_{42} & 0
	\end{pmatrix}
	-
	\det \begin{pmatrix}
		a_{11} & a_{12} \\
		a_{31} & a_{32}
	\end{pmatrix}
	\det\begin{pmatrix}
	    a_{11} & a_{13} & a_{14} \\
	    a_{21} & a_{23} & a_{24} \\
	    a_{41} & a_{43} & 0
	\end{pmatrix}.
\]
To see where this polynomial comes from,
consider the $3\times 3$ minor of the matrix $(a_{ij})$ obtained by removing the
second row and third column, and the matrix obtained by removing the second column and third row.
Both must vanish and thus can be rearranged to give an expression for $a_{44}$.
The difference of these expressions must vanish.
Clearing denominators in this difference gives the polynomial above.
The discriminant of this polynomial with respect to $a_{11}$ is written out below.
When it is negative, the corresponding partial matrix
has no real completion to rank $2$.
\begin{align*}
	&(a_{13}a_{24}a_{32}a_{41}
	-a_{12}a_{23}a_{34}a_{41}
	-a_{14}a_{23}a_{31}a_{42}
	-a_{13}a_{21}a_{34}a_{42}
	+a_{12}a_{24}a_{31}a_{43}
	+a_{14}a_{21}a_{32}a_{43})^2
	\\
	&-4(
	a_{23}a_{34}a_{42}
	-a_{24}a_{32}a_{43})
	(
	a_{12}a_{14}a_{23}a_{31}a_{41}
	-a_{12}a_{13}a_{24}a_{31}a_{41}
	-a_{13}a_{14}a_{21}a_{32}a_{41}
	\\
	&\qquad\qquad\qquad\qquad\qquad\qquad\qquad+a_{12}a_{13}a_{21}a_{34}a_{41}
	+a_{13}a_{14}a_{21}a_{31}a_{42}
	-a_{12}a_{14}a_{21}a_{31}a_{43}
	)
\end{align*}
To see that this polynomial can in fact be negative,
plug in the entries of the following matrix.
\[
	A = \begin{pmatrix}
	? & -\frac32 & -1 & 1 \\
	-5 & ? & 1 & -2 \\
	-2 & 1 & ? & -1 \\
	1 & -1 & -1 & ?
	\end{pmatrix}.
\]
Note that this implies that $A$ and any real $G$-partial matrix in a sufficiently small
neighborhood around $A$ can be completed to rank $3$ over $\rr$, but not rank $2$. For instance, if we specify all variables in the above discriminant as given in the matrix $A$ except for $a_{12}$ and $a_{21}$, we get an indefinite conic in the $(a_{12},a_{21})$-plane that has the topology of the hyperbola. The point $(a_{12},a_{21}) = (-3/2,-5)$ lies in a connected component where this quadratic polynomial is negative.

By doing the above computation for the other diagonal entries $a_{22},a_{33}$, and $a_{44}$ instead of $a_{11}$, we obtain similar polynomials, derived from the other pairs of $3\times 3$-minors involving diagonal entries.
The algebraic boundary separating the $G$-partial matrices of real completion ranks $2$
and $3$ is defined by the vanishing of the product of the discriminant conditions that we get this way.
\end{exm}

There are software packages that compute an algebraic description of the image of such a projection, i.e.~of the constructible set $\pi_G(\M^{m\times n}_r)\subset \mathcal{M}_G = \C^E$. One recent example implemented in \texttt{Macaulay2} \cite{M2} is \texttt{TotalImage} developed by Harris, Micha\l ek, and Sert\"oz \cite{HMSarxiv}. This is an exact algorithm based on similar ideas as discussed in Example~\ref{ex:4x4complete}.

To compute typical ranks over the reals, the general purpose algorithm is Tarski's quantifier elimination \cite[Corollary 1.4.7]{BCRMR1659509}. This algorithm is implemented in various computer algebra systems. However, even the example of  $4\times 4$ matrices with missing diagonal is too complex for non-custom implementations of quantifier elimination that we have tried.
Given the complexity of the output in the complex case described in the previous paragraph, it seems reasonable to suspect, that applying quantifier elimination algorithm in this case is currently not feasible.

We now give a probability-1 algorithm for computing generic completion ranks.
The main idea is to check that $\pi_G$ is surjective when
restricted to a generic tangent space of $\M^{m\times n}_r$.
Similar algorithms have already been proposed in e.g. \cite{singer2010uniqueness,KTTMR3417786}. In the theory of algebraic matroids, this is known as linearization.


 \begin{alg}\label{alg:gcr}
 To determine the generic completion rank of a given bipartite graph $G$, we propose the following probabilistic algorithm that only uses linear algebra and correctly determines the generic completion rank with probability $1$.
 The key is to compute the rank of the projection on the tangent space at a random point as explained in Remark~\ref{rem:differentialGCR}. To make this a linear algebra computation, pick a random $(m-r)$-dimensional vector space $V$ in $\C^m$ with basis $v_1,v_2,\ldots,v_{m-r}$,
 and a random $(n-r)$-dimensional vector space $W$ in $(\C^n)^\ast$ with basis $w_1,w_2,\ldots,w_{n-r}$.
 Then the tangent space to $\M^{m\times n}_r$ at a matrix $M:\C^m\to \C^n$ of rank $r$ with kernel $V$ and image $W^\perp$ is the set of all $m\times n$ matrices $A$ with $w_iAv_j = 0$ for all $i,j$ (see \cite[page~185]{HarMR1416564}). These conditions are linear in the entries in $A$.
 If the linear map $\pi_G\vert {T_M \M^{m\times n}_r}\colon T_M \M^{m\times n}_r \to \C^E$ is surjective, the generic completion rank of $G$ is at most $r$, see Remark~\ref{rem:differentialGCR}.
 If it is not, then genericity of $M$ implies that the generic completion rank of $G$ is strictly less than $r$.
 \end{alg}

\section{Bounding the typical ranks of a bipartite graph}
This section gives various bounds on typical ranks of a bipartite graph.
This is done by studying how deleting single vertices and taking bipartite clique sums affects typical ranks
(Corollary \ref{cor:lowDegreeVertices} and Theorem \ref{thm:bipartiteCliqueSum}).
One consequence is that if the $r$-core of a bipartite graph is empty,
then its maximum typical rank is at most $r-1$ (Corollary \ref{cor:core}),
which in turn implies that the maximum typical rank of a graph $G$
is at most $2\gcr(G)-1$ (Theorem \ref{thm:maxTypicalBound}).
We end this section by giving two useful lemmas.
One of them, Lemma \ref{lem:gcrExpected},
gives a sufficient condition on a graph $G$
for sharpness of the lower bound on $\gcr(G)$ given by a dimension count.
We begin with the simple observation of how generic completion rank behaves with respect to adding a vertex to a given bipartite graph.

\begin{lem}\label{lem:extracolumn}
Let $\kk = \rr$ or $\cc$. Let $M'$ be an $m\times n$ matrix of rank $r$ with entries in $\kk$.
Consider the partial $m\times (n+1)$ matrix
\[
M = \begin{pmatrix}
M' & v
\end{pmatrix}
\]
obtained by adding a new column to $M'$ and suppose that $v$ is a partially specified vector with $k$ specified entries.
\begin{enumerate}
\item If $k\leq r$ and all $r\times r$ minors of $M'$ are non-zero, then $M$ has a completion of rank $r$ with entries in $\kk$.
\item If $k>r$, then generically any completion of $M$ with entries in $\kk$ has rank $r+1$. More precisely, the set of pairs $(M',v)$, for which the conclusion fails is contained in a proper Zariski closed set.
\end{enumerate}
\end{lem}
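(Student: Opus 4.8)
The plan is to reduce both parts to a single question: after completing the new column, when can it be made to lie in the column space $\mathrm{Col}(M')$? Write $S\subseteq\{1,\dots,m\}$ for the set of $k$ specified positions of $v$, let $v_S\in\kk^k$ be the vector of specified entries, and let $M'_S$ denote the $k\times n$ submatrix of $M'$ consisting of the rows indexed by $S$. Any completion of $M$ appends a single fully-specified column to $M'$, so its rank is at least $\rk(M')=r$ and at most $r+1$, and it equals $r$ exactly when the completed column lies in $\mathrm{Col}(M')$. Projecting onto the $S$-coordinates, the projection of $\mathrm{Col}(M')$ is precisely $\mathrm{Col}(M'_S)$, so a completion of rank $r$ exists if and only if $v_S\in\mathrm{Col}(M'_S)$. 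This reformulation is the backbone of both parts.

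For part (1), the key step is to convert the hypothesis that all $r\times r$ minors of $M'$ are nonzero into the statement that every set of at most $r$ rows of $M'$ is linearly independent: given any $r$ rows, some $r\times r$ minor supported on them is nonzero, so they are independent, and any subset of $k\le r$ rows inherits independence (here we use $m\ge r$, which holds since $\rk(M')=r$). Applying this to the rows in $S$ shows $\rk(M'_S)=k$, so the linear map $\kk^n\to\kk^k$, $x\mapsto M'_S x$, is surjective. Hence there is $x\in\kk^n$ with $M'_S x = v_S$; setting $u=M'x\in\mathrm{Col}(M')$ yields a vector over $\kk$ agreeing with $v$ on $S$, and the completion $(M'\mid u)$ has rank $r$. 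This is the only place the minor hypothesis is used, and it is the crux of part (1).

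For part (2), I would argue as follows. By the reformulation above, a completion of rank $r$ exists if and only if $v_S\in\mathrm{Col}(M'_S)$; but $\rk(M'_S)\le \rk(M')=r<k$, so $\mathrm{Col}(M'_S)$ is a \emph{proper} subspace of $\kk^k$. Thus for every fixed $M'$ of rank $r$ the offending $v_S$ form a proper linear subspace, and a generic $v_S$ forces every completion to have rank exactly $r+1$. To package this as a statement about pairs, observe that the failure locus $B=\{(M',v_S): v_S\in\mathrm{Col}(M'_S)\}$ is contained in the Zariski-closed set $Z=\{(M',v_S):\rk(M'_S\mid v_S)\le r\}$, cut out by the $(r+1)\times(r+1)$ minors of the $k\times(n+1)$ matrix $(M'_S\mid v_S)$ inside the irreducible variety $\M^{m\times n}_r\times\kk^k$. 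That $Z$ is proper follows from a single point outside it: take a generic factorization $M'=AB$ with $A\in\kk^{m\times r}$ and $B\in\kk^{r\times n}$, so that $\rk(M'_S)=r$, and pick $v_S\notin\mathrm{Col}(M'_S)$; then $\rk(M'_S\mid v_S)=r+1$.

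The computations here are all routine linear algebra, and I expect the translation of the minor hypothesis in part (1) to be the single genuinely load-bearing step. The only other points needing care are the identification of the projection of $\mathrm{Col}(M')$ onto the $S$-coordinates with $\mathrm{Col}(M'_S)$ (which is what reduces the full completion problem to the $k\times n$ subproblem) and, in part (2), correctly fixing the ambient irreducible variety $\M^{m\times n}_r\times\kk^k$ so that the containment $B\subseteq Z$ inside a proper Zariski-closed set is meaningful and yields the claimed genericity.
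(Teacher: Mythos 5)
Your proof is correct and follows essentially the same elementary linear-algebra route as the paper: the paper phrases everything in terms of preserving or violating row relations of $M'$, which is precisely dual to your criterion that a rank-$r$ completion exists if and only if $v_S \in \mathrm{Col}(M'_S)$, and both proofs use the nonvanishing-minor hypothesis in the same way (to force the $k$ rows meeting the specified entries to be independent). If anything, your part (2) is more careful than the paper's, which only asserts that generic data violates the row relations, whereas you exhibit an explicit proper Zariski-closed set (the determinantal locus $\rk(M'_S \mid v_S) \le r$ inside $\M^{m\times n}_r \times \kk^k$) containing the failure locus together with a witness point outside it.
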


\begin{proof}
We assume after permutation of the rows that the first $k$ entries of $v$ are specified. Since $M'$ is generic, we can assume that the top left $r\times n$ block of $M'$ has full rank $r$. In other words, the first $r$ rows of $M'$ form a basis of the rowspace of $M'$. 

With the assumption that $k\leq r$, we can choose the first $r$ rows of $M$ to be a basis of the rowspace of $M$ by filling in the appropriate entries in $v$ to keep the linear relations given by the rows of $M'$. This shows that $M$ has a completion of rank $r$, which implies (1).

If $k>r$, then the assumption that $M'$ and the specified entries of $v$ are generic, implies that the row relations among the rows of $M'$ will be violated for $M$, so the rank of any completion must be larger than the rank of $M$. Since we are adding only one more column, the rank cannot increase by more than $1$.
\end{proof}

\begin{cor}\label{cor:lowDegreeVertices}
Let $G$ be a bipartite graph and let $v$ be a vertex of $G$ of degree $k$. Let $G'$ be the graph obtained from $G$ by deletion of $v$.
\begin{enumerate}
\item If the generic completion rank of $G'$ is greater than or equal to $k$, then the generic completion rank of $G$ is equal to the generic completion rank of $G'$.
\item If the maximal typical completion rank over $\R$ of $G'$ is greater than or equal to $k$, then the maximal typical completion rank of $G$ is equal to the maximal typical completion rank of $G'$.
\end{enumerate}
\end{cor}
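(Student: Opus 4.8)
The plan is to realize $G$ as $G'$ together with one extra vertex and reduce everything to Lemma~\ref{lem:extracolumn}. After transposing if necessary, I assume $v$ lies on the column side, so that every $G$-partial matrix is a pair $(M_0\mid w)$, where $M_0$ is a $G'$-partial $m\times n$ matrix and $w$ records the $k$ specified entries of the new column. Two elementary observations are used throughout: (i) deleting the last column of any completion of $(M_0\mid w)$ gives a completion of $M_0$ of no larger rank, so the completion rank of $(M_0\mid w)$ is always at least that of $M_0$; and (ii) appending a single column raises the rank by at most $1$. For part~(1), observation (i) applied to generic points gives $\gcr(G)\ge\gcr(G')=:r$. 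For the reverse inequality I take a generic complex $(M_0\mid w)$: since the rank-$r$ matrices with all $r\times r$ minors nonzero form a dense open subset of $\M^{m\times n}_r$, and $\pi_{G'}$ is dominant on $\M^{m\times n}_r$ because $\gcr(G')=r$, a generic $M_0$ has a rank-$r$ completion $M'$ with all $r\times r$ minors nonzero. As $k\le r$ by hypothesis, Lemma~\ref{lem:extracolumn}(1) extends this to a rank-$r$ completion of $(M'\mid w)$, so $\gcr(G)\le r$ and hence $\gcr(G)=\gcr(G')$.

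For part~(2) write $t:=\mtr(G')$. I would first prove the upper bound $\mtr(G)\le t$ by showing that a generic real $(M_0\mid w)$ has completion rank at most $t$. A generic real $M_0$ has completion rank $s$ equal to one of the typical ranks of $G'$ (the non-typical level sets are lower-dimensional semialgebraic sets), so $s\le t$. If $s<t$, then by (ii) any extension of a rank-$s$ completion of $M_0$ already has rank $\le s+1\le t$. If $s=t$, then $s=t\ge k$, and I would argue that a generic $M_0$ in the interior of the rank-$t$ region admits a rank-$t$ completion $M'$ with all $t\times t$ minors nonzero; Lemma~\ref{lem:extracolumn}(1) with $r=t\ge k$ then yields a rank-$t$ completion of $(M_0\mid w)$. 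In either case the completion rank is $\le t$ off a lower-dimensional set, so no rank exceeding $t$ is typical for $G$.

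For the matching lower bound, consider the full-dimensional set of $(M_0\mid w)$ with $M_0$ in the rank-$t$ region: observation (i) forces completion rank $\ge t$ there, while the upper bound forces it $\le t$ away from a lower-dimensional set, so the completion rank equals $t$ on a set with nonempty interior. Hence $t$ is typical and $\mtr(G)=t=\mtr(G')$. The one step that needs genuine care—and the main obstacle—is the claim used in the $s=t$ case, namely that a generic $M_0$ in the top typical region has a \emph{minimal} completion with nonvanishing maximal minors. Over $\C$ this is immediate from the fact that a dominant morphism carries a dense open set to a dense set, but over $\R$ one cannot invoke Zariski density. Instead I would argue by semialgebraic dimension: letting $W\subseteq\M^{m\times n}_t(\R)$ be the set of rank-$t$ matrices all of whose $t\times t$ minors are nonzero, the complement $\M^{m\times n}_t(\R)\setminus W$ is lower-dimensional, so its $\pi_{G'}$-image is lower-dimensional; since every point of the rank-$t$ region lies in $\pi_{G'}(\M^{m\times n}_t(\R))$, the ``bad'' set of $M_0$ whose rank-$t$ completions all avoid $W$ is contained in this lower-dimensional image and thus has empty interior. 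This relies on $\dim(A\cup B)=\max(\dim A,\dim B)$, on the fact that semialgebraic maps do not increase dimension, and on the equality (from the proof of Proposition~\ref{prop:typicalRanksInterval}) between the dimension of a real projection of $\M^{m\times n}_t$ and that of its complex Zariski closure.
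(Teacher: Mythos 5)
Your reduction of the corollary to Lemma~\ref{lem:extracolumn} is exactly the paper's route (the paper's proof of Corollary~\ref{cor:lowDegreeVertices} is precisely this reduction plus a citation of Lemma~\ref{lem:extracolumn}), and your part (1), the case $s<t$, and the lower bound $\mtr(G)\ge\mtr(G')$ in part (2) are all fine. The genuine gap sits at the step you yourself single out as the crux, and your proposed repair does not work. Writing $E'$ for the edge set of $G'$ and $W\subseteq\M^{m\times n}_t(\R)$ for the rank-$t$ matrices with all $t\times t$ minors nonzero, you argue that the bad set is contained in $\pi_{G'}\bigl(\M^{m\times n}_t(\R)\setminus W\bigr)$ and that this image is lower-dimensional because $\M^{m\times n}_t(\R)\setminus W$ has dimension at most $\dim\M^{m\times n}_t-1$. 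But ``semialgebraic maps do not increase dimension'' only yields $\dim\pi_{G'}\bigl(\M^{m\times n}_t(\R)\setminus W\bigr)\le\dim\M^{m\times n}_t-1$, and this bound is useless: since $t\ge\gcr(G')$, the projection $\pi_{G'}$ is dominant on $\M^{m\times n}_t$, which forces $\dim\M^{m\times n}_t\ge |E'|$, so the bound never drops below $|E'|-1$. Worse, in the case that matters for part (2), namely $t=\mtr(G')>\gcr(G')$, the image is genuinely full-dimensional: $\M^{m\times n}_t(\R)\setminus W$ contains all of $\M^{m\times n}_{t-1}(\R)$ (every $t\times t$ minor of a matrix of rank at most $t-1$ vanishes), and $\pi_{G'}\bigl(\M^{m\times n}_{t-1}(\R)\bigr)$ already has nonempty interior because $t-1\ge\gcr(G')$, which is the minimal typical rank by Proposition~\ref{prop:typicalRanksInterval}. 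Fibers of $\pi_{G'}$ restricted to $\M^{m\times n}_t(\R)$ are positive-dimensional, so lower-dimensional sets upstairs can (and here do) have full-dimensional images downstairs; the containment of the bad set in such an image therefore carries no information.

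The statement you need is the paper's Lemma~\ref{lem:asGenericAsPossible}, and over $\R$ its proof must be local rather than a global dimension count. Sketch: let $M_0$ lie in the interior of the rank-$t$ region and let $X$ be a real completion of rank exactly $t$. Then $X$ is a smooth point of $\M^{m\times n}_t$, so near $X$ the real points form a manifold of full dimension $D=\dim_\C\M^{m\times n}_t$. Both the vanishing-minor locus and the locus where the differential of $\pi_{G'}$ fails to be surjective are real points of proper subvarieties of $\M^{m\times n}_t$ (the latter is proper by generic smoothness in characteristic zero, using dominance of $\pi_{G'}$ on $\M^{m\times n}_t$ for $t\ge\gcr(G')$), hence have dimension at most $D-1$ and cannot fill any neighborhood of $X$ in the real points. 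Choosing $X'$ near $X$ avoiding both, the implicit function theorem makes $\pi_{G'}$ an open map at $X'$, so an entire Euclidean neighborhood of $\pi_{G'}(X')$ consists of partial matrices admitting rank-$t$ completions with all $t\times t$ minors nonzero; since $\pi_{G'}(X')$ can be taken arbitrarily close to $M_0$, the bad set has empty interior. With this substituted for your dimension argument, the rest of your proof of part (2) goes through.
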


\begin{proof}
A $G$-partial matrix is a $G'$-partial matrix with an additional column (after possibly transposing the matrix). So the result follows from the previous Lemma~\ref{lem:extracolumn}.
\end{proof}

\begin{exm}
The genericity assumptions in Lemma~\ref{lem:extracolumn} are important. Consider the partial matrix
\[
\begin{pmatrix}
1 & 1 & 1 \\
1 & 1 & 2 \\
0 & 1 & ?
\end{pmatrix}.
\]
The rank of the left $3\times 2$ block $M'$ is $2$ and the last column $v$ only has $2$ specified entries, yet the matrix does not have a completion of rank $2$ because the first two rows of $M'$ are equal but the first two entries of $v$ are different.
\end{exm}

We now recall the notion of $k$-core from graph theory.
For more on $k$-cores, see \cite{BRG,JLR,bolobas1984graph}.
\begin{defi}\label{def:core}
The \emph{$k$-core} of a graph $G$ is the maximal subgraph of $G$ such that all vertices have degree at least $k$.
Equivalently, the $k$-core of $G$ is the graph obtained from $G$ by iteratively deleting verices of degree less than $k$.
\end{defi}

\begin{cor}\label{cor:core}
If the $k$-core of a bipartite graph $G$ is empty, then the maximal typical completion rank of $G$ is less than $k$.
\end{cor}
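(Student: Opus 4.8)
The plan is to peel off the vertices of $G$ one at a time, in the order witnessing that the $k$-core is empty, and to control the maximal typical rank at each stage using Corollary~\ref{cor:lowDegreeVertices} together with a cheap estimate saying that adjoining one vertex raises the maximal typical rank by at most one. First I would record the combinatorial input. Since the $k$-core is obtained by iteratively deleting vertices of degree less than $k$, emptiness of the $k$-core means this process eventually removes \emph{every} vertex. Reading off the order of deletion produces an ordering $v_1,\dots,v_N$ of the vertices so that, with $H_i$ denoting the induced subgraph on $\{v_i,\dots,v_N\}$, the vertex $v_i$ has degree $d_i\le k-1$ in $H_i$. Thus $H_N$ is a single vertex, $H_1=G$, and each $H_i$ is obtained from $H_{i+1}$ by adjoining the vertex $v_i$ of degree $d_i\le k-1$; every $H_i$ is again bipartite, so $\mtr(H_i)$ makes sense.

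Next I would prove the auxiliary estimate $\mtr(G)\le\mtr(G')+1$ whenever $G$ is obtained from $G'$ by adding a single vertex. After transposing if necessary, a $G$-partial matrix is $(M'\mid v)$ with $M'$ a $G'$-partial matrix, and for \emph{every} such $M$ one has $\mathrm{cr}(M)\le \mathrm{cr}(M')+1$: complete $M'$ to its completion rank and fill the new column arbitrarily, which produces a completion of $M$ whose rank exceeds that of the completion of $M'$ by at most one. For generic $M$ the restriction $M'$ is generic, so $\mathrm{cr}(M')\le\mtr(G')$, and hence $\mathrm{cr}(M)\le\mtr(G')+1$ on a dense set. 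Since each typical rank is attained on a set with nonempty interior, no typical rank of $G$ can exceed $\mtr(G')+1$.

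I would then run a downward induction on $i$ to show $\mtr(H_i)\le k-1$. The base case $\mtr(H_N)=0$ is clear. For the inductive step, $H_i=H_{i+1}+v_i$ with $d_i\le k-1$, and I split according to whether the hypothesis of Corollary~\ref{cor:lowDegreeVertices}(2) holds. If $\mtr(H_{i+1})\ge d_i$, then Corollary~\ref{cor:lowDegreeVertices}(2) gives $\mtr(H_i)=\mtr(H_{i+1})\le k-1$ by the inductive hypothesis. If instead $\mtr(H_{i+1})<d_i$, then $\mtr(H_{i+1})\le d_i-1$, and the auxiliary estimate yields $\mtr(H_i)\le \mtr(H_{i+1})+1\le d_i\le k-1$. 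In either case $\mtr(H_i)\le k-1$, and taking $i=1$ gives $\mtr(G)\le k-1<k$, as desired.

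The step requiring the most care is the case $\mtr(H_{i+1})\ge d_i$: here the crude $+1$ estimate would only give $\mtr(H_i)\le k$, which is off by one, so one genuinely needs the \emph{non-increase} statement of Corollary~\ref{cor:lowDegreeVertices}(2). The auxiliary $+1$ bound is strong enough precisely in the complementary regime, where the adjoined vertex has large degree relative to $\mtr(H_{i+1})$. The main obstacle is therefore bookkeeping: verifying that these two regimes genuinely exhaust all possibilities at each step, and checking that the genericity used in the auxiliary estimate is compatible with the coordinate projection $(M'\mid v)\mapsto M'$ so that a generic $G$-partial matrix really does restrict to a generic $G'$-partial matrix.
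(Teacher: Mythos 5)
Your proof is correct and follows the paper's approach: emptiness of the $k$-core lets you build $G$ by adjoining one vertex of degree at most $k-1$ at a time, and Corollary~\ref{cor:lowDegreeVertices}(2) controls the maximal typical rank along the way. In fact your write-up is more complete than the paper's one-sentence proof, which cites only Corollary~\ref{cor:lowDegreeVertices} and silently passes over the regime $\mtr(H_{i+1}) < d_i$ where that corollary's hypothesis fails; your auxiliary $+1$ estimate (essentially a restatement of the mechanism behind Lemma~\ref{lem:extracolumn}(2), that one added column raises rank by at most one) is exactly what is needed to close that case.
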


\begin{proof}
Saying that the $k$-core is empty is the same as saying that we can build the graph $G$ by adding a vertex of degree less than $k$ at a time. So the claim follows from Corollary~\ref{cor:lowDegreeVertices}.
\end{proof}

\begin{thm}\label{thm:maxTypicalBound}
   Let $G$ be a bipartite graph with generic completion rank $r$.
   Then the maximum typical rank of $G$ is at most $2r-1$.
\end{thm}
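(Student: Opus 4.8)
The plan is to derive the bound from the $k$-core inequality of Corollary~\ref{cor:core}. Since that result gives maximal typical rank strictly less than $k$ whenever the $k$-core of $G$ is empty, it suffices to prove that if $\gcr(G) = r$ then the $2r$-core of $G$ is empty; applying Corollary~\ref{cor:core} with $k = 2r$ then yields $\mtr(G) \le 2r-1$. (We may assume $r \ge 1$, the case $r=0$ being the edgeless graph.) So the whole theorem reduces to a single implication about the core.

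I would prove the contrapositive: a nonempty $2r$-core forces $\gcr(G) \ge r+1$. Suppose the $2r$-core $H$ is nonempty, with parts of sizes $m'$ and $n'$ and edge set $E(H)$. By definition every vertex of $H$ has degree at least $2r$ within $H$, so summing degrees over all $m'+n'$ vertices gives $2\,\#E(H) = \sum_{v} \deg_H(v) \ge 2r(m'+n')$, that is $\#E(H) \ge r(m'+n')$; moreover each row vertex has degree at most $n'$ and each column vertex degree at most $m'$, so $m', n' \ge 2r$. Now apply the dimension-count lower bound of Proposition~\ref{prop:dimcount}(1) to $H$. For every integer $k$ with $1 \le k \le r$ we have
\[
k(m'+n') - k^2 \;\le\; r(m'+n') - 1 \;<\; r(m'+n') \;\le\; \#E(H),
\]
so no such $k$ satisfies $k(m'+n') - k^2 \ge \#E(H)$; note that $k \le r < 2r \le \min(m',n')$, so the formula is being used in its valid range. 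Hence the smallest $k$ for which that inequality holds is at least $r+1$, and Proposition~\ref{prop:dimcount}(1) gives $\gcr(H) \ge r+1$.

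To finish, I need monotonicity of generic completion rank under passing to subgraphs, namely $\gcr(H) \le \gcr(G)$ whenever $H$ is a subgraph of $G$. This follows from a restriction argument: a generic $G$-partial matrix $M$ has a completion $A \in \M^{m\times n}$ of rank $\gcr(G)$, and the submatrix of $A$ indexed by the rows and columns of $H$ is a completion of rank at most $\gcr(G)$ of the $H$-partial matrix obtained by restricting $M$ to the coordinates $E(H)$; since this restriction is a coordinate projection, a generic $G$-partial matrix restricts to a generic $H$-partial matrix, whence $\gcr(H) \le \gcr(G)$. Combining the two bounds gives $r = \gcr(G) \ge \gcr(H) \ge r+1$, a contradiction. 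Therefore the $2r$-core of $G$ is empty, and Corollary~\ref{cor:core} yields $\mtr(G) \le 2r-1$.

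The only substantive point beyond bookkeeping is the passage from the density of the $2r$-core to a strict improvement on the generic completion rank; the dimension count makes this immediate once one records that the core is edge-dense ($\#E(H) \ge r(m'+n')$), which is exactly what rules out every rank $k \le r$. I expect the verification of subgraph monotonicity of $\gcr$ and the check that $m', n' \ge 2r$ (so that the dimension-count formula is applied in the range $k < \min(m',n')$) to be the steps that most want care, although each is routine.
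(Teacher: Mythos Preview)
Your proof is correct and follows essentially the same route as the paper: both reduce the bound to showing that the $2r$-core of $G$ is empty via the edge-count inequality $\#E \le r(m+n-r)$ coming from Proposition~\ref{prop:dimcount}, and then invoke Corollary~\ref{cor:core}. The only cosmetic difference is that the paper runs the argument forward (each graph with $\gcr \le r$ has a vertex of degree $\le 2r-1$, so iterative deletion empties the $2r$-core), whereas you phrase it as a contrapositive (a nonempty $2r$-core $H$ satisfies $\#E(H) \ge r(m'+n')$, forcing $\gcr(H) \ge r+1$ and contradicting subgraph monotonicity); the computations are identical.
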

\begin{proof}
    Let $k$ be the minimum degree of $G$ and let $m,n$ denote the sizes of the bipartite parts of $G$.
    Since there are at most $r(m+n-r)$ edges of $G$ (Proposition~\ref{prop:dimcount}(a)),
    we must have $k(m+n) \le 2r(m+n-r)$, i.e.
    \[
    	k \le 2\left(r - \frac{r^2}{m+n}\right).
    \]
    Therefore the minimum vertex degree of $G$ is at most $2r-1$.
    By induction on the number of vertices of $G$,
    this implies that the $2r$-core of $G$ is empty
    and so Corollary~\ref{cor:core} implies that the maximum typical rank of $G$ is at most $2r-1$.
\end{proof}

\begin{lem}\label{lem:asGenericAsPossible}
    Let $r$ be greater than or equal to the generic completion rank of the bipartite graph $G = ([m],[n],E)$.
    The set of $G$-partial matrices $M$ that have a completion of rank $r$ and such that for every completion of $M$ some $r\times r$ minor vanishes is contained in a proper Zariski closed subset of $\kk^E$.
\end{lem}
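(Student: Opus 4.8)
The plan is to realise the ``good'' partial matrices---those admitting a completion of rank $r$ with \emph{every} $r\times r$ minor nonzero---as the image of a dense open subset of $\M^{m\times n}_r$, and then to show that the bad set is contained in the complement of this image. Write $\M_r:=\M^{m\times n}_r$, let $F_M:=\pi_G^{-1}(M)\cap\M_r$ denote the fibre of completions of rank at most $r$, and let $\M_r^\circ\subseteq\M_r$ be the locus of matrices of rank exactly $r$ all of whose $r\times r$ minors are nonzero. Using the parametrisation $A=BC$ with $B\in\M^{m\times r}$, $C\in\M^{r\times n}$ and the identity $\det A_{I,J}=\det B_I\det C_J$ for index sets $I,J$ of size $r$, a generic product $BC$ has all maximal minors nonzero; hence $\M_r^\circ$ is a nonempty Zariski-open, and therefore dense, subset of the irreducible variety $\M_r$. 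Since $r\ge\gcr(G)$, the projection $\pi_G|_{\M_r}$ is dominant, and restricting a dominant morphism to a dense open subset keeps the image Zariski-dense; thus $\pi_G|_{\M_r^\circ}$ is dominant as well. The key observation is that a partial matrix $M$ lies in the bad set only if $M$ has \emph{no} completion in $\M_r^\circ$, for such a completion would be a rank-$r$ completion with no vanishing $r\times r$ minor, contradicting the defining property of the bad set. Hence the bad set is contained in $\kk^E\setminus\pi_G(\M_r^\circ)$.

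Over $\kk=\C$ this finishes the argument at once: by Chevalley's theorem \cite[Exercise~3.19]{HarMR0463157} the image $\pi_G(\M_r^\circ)$ is constructible, and being Zariski-dense it contains a dense Zariski-open subset $V\subseteq\C^E$. Consequently the bad set is contained in $\C^E\setminus V$, a proper Zariski-closed subset.

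Over $\kk=\R$ this is exactly where the main difficulty lies: Chevalley is unavailable, $\pi_G(\M_r^\circ(\R))$ is only semialgebraic, and its complement can be full-dimensional---precisely the region of higher typical rank where $M$ has no rank-$\le r$ completion at all. The point is that the bad set sees only those $M$ that \emph{do} have a real rank-$r$ completion yet whose every real completion degenerates into the minor-vanishing locus $Z:=\M_r\setminus\M_r^\circ$ (a proper closed subvariety of $\M_r$ of codimension one), while a complex completion might still avoid $Z$. To control this I would first apply generic smoothness in characteristic zero \cite[Section~III.10]{HarMR0463157} to the dominant morphism $f:=\pi_G|_{\M_r^{\mathrm{sm}}}$ on the smooth rank-exactly-$r$ locus, obtaining a dense Zariski-open $V\subseteq\C^E$, defined over $\R$, over which $f$ is smooth with nonempty fibres of pure dimension $\delta:=\dim\M_r-\#E$. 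For a real $M\in V(\R)$ possessing a real rank-$r$ completion $A_0$, the fibre $F_M$ is smooth at $A_0$, so $A_0$ is a smooth real point and the real points of the component $C_{A_0}$ of $F_M$ through $A_0$ are Zariski-dense in $C_{A_0}$; since $Z$ is a proper subvariety, either $C_{A_0}\not\subseteq Z$---and then a real point of $C_{A_0}$ is a completion avoiding $Z$, so $M$ is not bad---or $C_{A_0}\subseteq Z$.

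It remains to bound the locus $P:=\{M:\dim(\pi_G^{-1}(M)\cap Z)\ge\delta\}$, which is where the dichotomy can fail, since a bad $M\in V(\R)$ forces $C_{A_0}\subseteq Z$ and hence $M\in P$. Restricting $\pi_G$ to $Z$ and using upper semicontinuity of fibre dimension, the sublocus of $Z$ along which $\pi_G|_Z$ has fibres of dimension $\ge\delta$ is closed of dimension at most $\dim Z=\#E+\delta-1$, while it maps onto $P$ with fibres of dimension $\ge\delta$; hence $\dim P\le\#E-1$. Thus $P$ is constructible of dimension less than $\#E$, and $\overline{P}$, together with $\C^E\setminus V$, is a proper Zariski-closed subset defined over $\R$ whose real points contain the real bad set. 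Passing from these complex dimension bounds to a proper \emph{real} Zariski-closed subset of $\R^E$ is justified by \cite[Proposition~2.8.2]{BCRMR1659509}, equating the dimension of a semialgebraic set with that of its Zariski closure. I expect the real case to be the main obstacle: the subtle step is ruling out a positive-dimensional family of $M$ whose real completions all fall into $Z$ although their complex completions do not, and it is handled by the smoothness-plus-real-point-density argument above rather than by any naive real dimension count, since real fibres of $\pi_G$ need not have the expected dimension.
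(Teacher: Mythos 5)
Your proposal is correct, and while your complex case coincides with the paper's (the locus $U$ of rank-$r$ matrices with all $r\times r$ minors nonzero is dense open in the irreducible variety $\M^{m\times n}_r$, the projection is dominant because $r\ge\gcr(G)$, and Chevalley yields a dense constructible image whose complement contains the bad set), your real case takes a genuinely different route. The paper handles $\kk=\R$ in two lines by a Euclidean density/frontier argument: every real matrix of rank exactly $r$ has full local dimension in $\M^{m\times n}_r$, and $\M^{m\times n}_r(\R)$ is the Euclidean closure of the rank-exactly-$r$ real matrices, so the good completions $U(\R)$ are Euclidean-dense in $\M^{m\times n}_r(\R)$; by continuity of $\pi_G$, every bad partial matrix is then a Euclidean limit of good ones, i.e.~the bad set lies in the frontier $\ol{\pi_G(U(\R))}\setminus\pi_G(U(\R))$ of a semialgebraic set, which has strictly smaller dimension, and one concludes that its real Zariski closure is proper via \cite[Proposition~2.8.2]{BCRMR1659509}. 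You instead argue through generic smoothness over a dense open $V\subseteq\C^E$ defined over $\R$, Zariski-density of real points of the fiber component through the smooth real point $A_0$, the dichotomy $C_{A_0}\not\subseteq Z$ versus $C_{A_0}\subseteq Z$, and the semicontinuity/fiber-dimension bound $\dim P\le \#E-1$; all of these steps check out, and the descent to a proper real Zariski-closed subset of $\R^E$ is valid since $(\C^E\setminus V)\cup\ol{P}$ is a proper complex Zariski-closed set defined over $\R$. The trade-off: the paper's argument is far shorter and also covers real completions of rank strictly less than $r$ (it only needs the completion to lie in $\M^{m\times n}_r(\R)$, not to be a smooth point), whereas yours avoids the semialgebraic frontier-dimension fact and produces an explicit algebraic exceptional set defined over $\R$, at the cost of requiring the given real completion to have rank exactly $r$ so that it is a smooth point of $\M^{m\times n}_r$ --- harmless for the statement as written, but a restriction worth noting.
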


\begin{proof}
 We first consider the case $\kk = \C$. Let $U$ be the set of $m\times n$ matrices of rank $r$ such that all $r\times r$ minors are non-zero.
 Then $U$ is a Zariski-open subset of the irreducible variety $V_r$ of $m\times n$ matrices of rank at most $r$. So the claim follows by continuity of $\pi_G$.
 
 For $\kk = \R$, essentially the same argument can be applied to the real points of $V_r$, because every real matrix of rank $r$ has full local dimension in $V_r$ and $V_r(\R)$ is the closure (in the Euclidean topology) of the set of real matrices of rank $r$.
\end{proof}

A bipartite clique sum of two bipartite graphs $G$ and $H$
is a graph obtained by gluing $G$ and $H$ together along a common
complete bipartite subgraph.
Let $K$ be either $\rr$ or $\cc$.
Define $\mtr_K(G)$ to be the maximum typical rank of $G$ over $K$.
Theorem \ref{thm:bipartiteCliqueSum} below says that the maximum typical
rank of a bipartite clique sum is the larger
of the maximum typical ranks of the two pieces
when the bipartite clique sum is taken along a sufficiently
small common bipartite clique.
The case $K = \cc$ is easily implied by the ``Gluing Lemma'' in \cite[Lemma~3.9(2)]{kalai2016bipartite}
but the case $K = \rr$ requires a different proof, which we now provide.
Since $\mtr_\cc(G) = \gcr(G)$,
the Theorem below applies to generic completion rank as well as real typical rank.

\begin{thm}\label{thm:bipartiteCliqueSum}
Let $G = G_1\cup G_2$ be a bipartite clique sum of bipartite graphs
$G_1$, $G_2$ along a complete bipartite graph $K_{m,n}$.
The maximal typical rank of $G$ over a field $K$ is the maximum $\max\{\mtr_K(G_1),\mtr_K(G_2)\}$ of the maximal typical ranks of the summands, given that this number is at least $\max\{m,n\}$.
\end{thm}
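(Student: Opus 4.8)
Write $R_0,C_0$ for the $m+n$ vertices of the shared $K_{m,n}$, and let $R_i,C_i$ (for $i=1,2$) be the remaining vertices of $G_i$; set $t=\max\{\mtr_K(G_1),\mtr_K(G_2)\}$ and assume without loss of generality $t=\mtr_K(G_1)$. In block form a $G$-partial matrix has a fully specified $m\times n$ block $B$ on $R_0\times C_0$, the pattern of $G_i$ on the blocks meeting $R_i$ and $C_i$, and totally unspecified blocks on $R_1\times C_2$ and $R_2\times C_1$. In particular each $G_i$ is a subgraph of $G$ and $\pi_{G_i}\colon K^E\to K^{E(G_i)}$ is a coordinate projection. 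The plan is to prove the two inequalities $\mtr_K(G)\ge t$ and $\mtr_K(G)\le t$ separately.

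\textbf{Lower bound.} Since $G_1$ is a subgraph of $G$, every completion of a $G$-partial matrix $M$ restricts to a completion of $\pi_{G_1}(M)$, so the completion rank of $M$ is at least that of $\pi_{G_1}(M)$. By definition of maximal typical rank the set of $G_1$-partial matrices of completion rank exactly $t$ has nonempty interior, and its preimage under the coordinate projection $\pi_{G_1}$ is a cylinder with nonempty interior in $K^E$ on which the completion rank of $M$ is at least $t$. Hence some rank $\ge t$ is typical and $\mtr_K(G)\ge t$. This argument is identical over $\R$ and $\C$ and does not use the size hypothesis.

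\textbf{Upper bound.} It suffices to show that a generic $M\in K^E$ has a completion of rank at most $t$, for then no rank exceeding $t$ can be typical. The restrictions $\pi_{G_1}(M)$ and $\pi_{G_2}(M)$ are generic in $K^{E(G_i)}$ (preimages of measure-zero sets under coordinate projections are measure zero), hence each has a completion of rank at most $\mtr_K(G_i)\le t$. The idea is to realize these two completions with \emph{matching latent data on the shared block} and then concatenate. Writing a rank-$\le t$ completion as $(\langle w_\alpha,x_\beta\rangle)$ with latent vectors $w_\alpha,x_\beta\in K^t$, the point is that if the two completions can be chosen so that their shared latent vectors $(w_\alpha)_{\alpha\in R_0}$ and $(x_\beta)_{\beta\in C_0}$ coincide, then concatenating all latent vectors over $R_0\cup R_1\cup R_2$ and $C_0\cup C_1\cup C_2$ produces a single rank-$\le t$ matrix whose $G_1$- and $G_2$-blocks are the chosen completions, i.e.\ a rank-$\le t$ completion of $M$.

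\textbf{Key step and main obstacle.} Everything thus reduces to the following gluing statement: for a generic factorization $B=P_0Q_0^{T}$ with $P_0\in K^{m\times t}$ of rank $m$ and $Q_0\in K^{n\times t}$ of rank $n$ --- which exists \emph{precisely because} $t\ge\max\{m,n\}$ --- each $\pi_{G_i}(M)$ admits a rank-$\le t$ completion whose shared latent data equals $(P_0,Q_0)$. The hypothesis $t\ge\max\{m,n\}$ is used twice: to produce a full-rank frame $(P_0,Q_0)$ at all, and to make the boundary conditions, which pair each new latent vector against the full-rank frame, consistent and underdetermined; transporting one completion's frame to another is governed by the affine system $\tilde P_0\,g=P_0$, $g^{T}\tilde Q_0^{T}=Q_0^{T}$ for $g\in\gl_t(K)$, whose solution space has dimension $(t-m)(t-n)\ge 0$ whenever the interface frames are full rank. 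The main obstacle is exactly this lemma in the real case. Over $\C$ it is essentially the Gluing Lemma of Kalai, Nevo and Novik \cite[Lemma~3.9(2)]{kalai2016bipartite} and follows from Zariski-density of generic points; over $\R$ one cannot invoke density, and must instead verify that the interface latent vectors can be taken full rank (so that no spurious linear relation on the shared block obstructs the alignment) and that the aligning transformation can be chosen in $\gl_t(\R)$ --- the latter because the real affine solution space is not contained in the hypersurface $\{\det=0\}$ for generic data. Establishing this real, full-rank, generic version of the gluing is the crux of the proof.
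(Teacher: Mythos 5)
Your overall architecture (easy lower bound via subgraphs; upper bound by completing the two blocks and gluing along the shared $K_{m,n}$) matches the paper's in spirit, but there is a genuine gap: the step you yourself call the crux --- that over $\R$ each $\pi_{G_i}(M)$ admits a rank-$\le t$ completion whose interface factorization can be aligned to a common full-rank frame $(P_0,Q_0)$ --- is stated, motivated, and then left unproved. That step essentially \emph{is} the theorem: as the paper notes, the complex case already follows from the Gluing Lemma of \cite[Lemma~3.9(2)]{kalai2016bipartite}, and the entire content of the real statement is the gluing argument you defer. Two concrete holes in your sketch. First, full rank of the interface factors on \emph{both} sides is not a technical convenience but a necessity, and it is not automatic: if the chosen completion's factor $\tilde Q_0$ has rank $s<n$ (possible whenever $m<n$, since one only knows $\rank \tilde Q_0 \ge \rank B = \min\{m,n\}$), then any $g$ with $gQ_0^T = \tilde Q_0^T$ kills an $(n-s)$-dimensional subspace of the column span of $Q_0^T$, so the \emph{entire} affine solution space lies in $\{\det = 0\}$ and no alignment exists. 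Arranging completions with nondegenerate interface data for almost every \emph{real} partial matrix is a genuine real-geometry statement; it is exactly the role of the paper's Lemma~\ref{lem:asGenericAsPossible}, whose proof uses that the real points of the determinantal variety have full local dimension. Second, even granted full-rank interfaces, your assertion that ``the real affine solution space is not contained in the hypersurface $\{\det=0\}$ for generic data'' is the claim to be proved, not an argument; over $\R$ it cannot be extracted from nonemptiness plus a dimension count. (A smaller slip: the system $\tilde P_0 g = P_0$, $g^T\tilde Q_0^T = Q_0^T$ as written does not preserve the product $\tilde P\tilde Q^T$; the product-preserving system is $\tilde P_0 g = P_0$, $gQ_0^T = \tilde Q_0^T$, which is the one whose solution space has dimension $(t-m)(t-n)$.)

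For comparison, the paper's proof sidesteps the invertible-transport question entirely. It completes the two diagonal blocks using Lemma~\ref{lem:asGenericAsPossible} (all maximal minors nonvanishing), then \emph{explicitly} fills in part of one unknown corner (the block $E$) so that a chosen set of $r_1$ rows meeting both blocks becomes a basis of the row space of the completed $G_1$-block, and finally completes both unknown corners by extending linear combinations of these basis rows; the hypothesis $\max_i\mtr(G_i)\ge\max\{m,n\}$ is what makes the row-basis construction possible, and the edge case where $r_1$ exceeds the number of rows of the $G_2$-block is handled by adjoining fully specified rows. If you want to complete your factorization route, the missing lemma you must prove is: for all $M$ outside a measure-zero subset of $\R^E$, each $\pi_{G_i}(M)$ has a real completion of rank at most $t$ admitting a factorization whose interface factors are exactly $(P_0,Q_0)$; the natural proof of this runs through precisely the kind of nondegeneracy statement the paper isolates in Lemma~\ref{lem:asGenericAsPossible}, at which point your argument and the paper's become comparable in length.
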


\begin{proof}
Since the summands $G_i$ are subgraphs of $G$, the maximal typical rank of $G$ is at least the maximal typical ranks of $G_i$ for $i=1,2$. To show the reverse inequality, let $M$ be a $G$-partial matrix, which can be written, after permutation of rows and columns, in the following form
\[
M = 
\begin{pmatrix}
M_1 & M_1' & ?\\
M_1'' & K & M_2' \\
? & M_2'' & M_2
\end{pmatrix},
\]
where the upper left block is a $G_1$-partial matrix and the bottom right block is a $G_2$-partial matrix, and $K$ is a completely specified submatrix corresponding to the bipartite clique $K_{m,n} = G_1\cap G_2$. Suppose that $M$ is generic. First we complete the blocks using Lemma~\ref{lem:asGenericAsPossible} to get a matrix
\[
A = 
\begin{pmatrix}
A_1 & B_1 & ? \\
C_1 & K   & B_2 \\
?   & C_2 & A_2
\end{pmatrix},
\]
where the top left matrix is a matrix of rank at most $\mtr(G_1)$ and the bottom right block a matrix of rank at most $\mtr(G_2)$,
both with minors non-vanishing according to Lemma \ref{lem:asGenericAsPossible}.
Set $r_i = \mtr(G_i)$ and assume $r_1\geq r_2$.
Our goal is to complete $A$ to rank $r_1$ over $\R$.
First, we consider the case that $r_1$ is less than or equal to the number of rows $m_2$ of the bottom right block.
We claim that we can complete the $r_1-m$ rows below $C_1$ ($m$ is the number of rows of $K$) to give us the following partial matrix
\[
	A = 
	\begin{pmatrix}
	A_1 & B_1 & ? \\
	C_1 & K   & B_2 \\
	E & C_2' & A_2' \\
	?   & C_2'' & A_2''
	\end{pmatrix}
	\qquad
	\textnormal{where} \qquad
	C_2 = \begin{pmatrix}C_2' \\ C_2'' \end{pmatrix}, \qquad
	A_2 = \begin{pmatrix}A_2' \\ A_2'' \end{pmatrix} \qquad
\]
so that each row of $\begin{pmatrix} A_1 & B_1 \end{pmatrix}$
is a linear combination of the rows of the $r_1\times n_1$ matrix
\[
	F:=\begin{pmatrix} C_1 & K \\ E & C_2' \end{pmatrix}.
\]
Then each such linear combination can be extended to a linear combination
of the rows of
\[
	H:= \begin{pmatrix} C_1 & K & B_2 \\ E & C_2' & A_2' \end{pmatrix}
\]
in order to complete the unknown entries in the upper-right corner.

We now prove the claim.
For each $i = 1,\dots,r_1-m$,
let $c_i$ be $i^\textnormal{th}$ row of $C_2$ and let $b_i$ be the $i^\textnormal{th}$ row of $B_1$.
These are all well-defined since $r_1-m \le m_2 - m$ and $m_1-m$.
By genericity, any $(n-1)\times n$ submatrix $K'$ of $K$
gives rise to a linear dependence of the form $x'K' + \lambda b_i = c_i$
with $\lambda \neq 0$.
By padding with zeros, we extend $x'$ to a row vector $x$ of size $m$.
Let $a_i$ be the $i^\textnormal{th}$ row of $A_1$.
Then we set the $i^\textnormal{th}$ row of $E$ equal to $xC_1 + \lambda a_i$.
This construction ensures that since
\[
	J:= \begin{pmatrix}
	    A_1 & B_1 \\
		C_1 & K
	\end{pmatrix}
\]
is a generic $m_1\times n_1$ matrix of rank $r_1$,
the rows of $F$ form a basis of the row space of $J$.
This proves our claim.

Since $r_1 \ge r_2$, 
each row of $\begin{pmatrix}C_2'' & A_2''\end{pmatrix}$ can be written as a linear
combination of the rows of
the $r_1 \times n_2$ matrix
\[
	\begin{pmatrix}
	K & B_2 \\
	C_2' & A_2'
	\end{pmatrix}.
\]
As before, such a linear combination can be extended to a linear combination of the rows of $H$
in order to complete the missing entries in the lower-left corner.
This gives us a completion of any generic $G$-partial matrix to rank $r_1$.

If $r_1 > m_2$,
we may add $r_1-m_2$ new vertices in $G_2$ corresponding to
$r_1-m_2$ fully specified rows.
This increases the generic completion rank of $G_2$ to at most $r_1$
and sets $m_2 = r_1$
thus bringing us back to the case where $r_1 \le m_2$.
\end{proof}

\begin{exm}\label{ex:twoTypicalRanksFromCube}
	Theorem \ref{thm:bipartiteCliqueSum} can be used to construct examples of graphs exhibiting
	multiple typical ranks.
	Let $G = ([4],[4],E)$ be the graph of the cube.
	The corresponding pattern of missing entries in a $4\times 4$ partial matrix has all entries
	known aside from the diagonals.
	We saw in Example \ref{ex:4x4complete} that $G$ exhibits $2$ and $3$ as typical ranks over the reals.
    Let $H$ be a bipartite graph with generic completion rank one (i.e.~a tree)
    or two (see \cite{Be} for a classification).
    Now if we glue $G$ and $H$ together along a $K_{2,2}$,
    a path of length three, a single edge, a single vertex, or the empty graph,
    then Theorem \ref{thm:bipartiteCliqueSum} tells us that
    the resulting graph has $2$ and $3$ as typical completion ranks.
    See Figure \ref{fig:gluingCubes} for two examples where $H = G$
    (note that $\gcr(G) = 2$).
\begin{figure}[h]
    \begin{subfigure}{0.49\textwidth}\centering
		\begin{tikzpicture}
		    \vertex (a) at (0,3){};
		    \vertex (b) at (3,3){};
		    \vertex (c) at (3,0){};
		    \vertex (d) at (0,0){};
		    \vertex (e) at (0.5,2.5){};
		    \vertex (f) at (2.5,2.5){};
		    \vertex (g) at (2.5,0.5){};
		    \vertex (h) at (0.5,0.5){};
		    \vertex (i) at (1,2){};
		    \vertex (j) at (2,2){};
		    \vertex (k) at (2,1){};
		    \vertex (l) at (1,1){};
		    \path
		    	(a) edge (b) edge (e) edge (d)
		    	(b) edge (c) edge (f)
		    	(c) edge (d) edge (g)
		    	(d) edge (h)
		    	(e) edge (f) edge (i)
		    	(f) edge (g) edge (j)
		    	(g) edge (h) edge (k)
		    	(h) edge (e) edge (l)
		    	(i) edge (j)
		    	(j) edge (k)
		    	(k) edge (l)
		    	(l) edge (i)
		    ;
		\end{tikzpicture}
    \end{subfigure}
    \begin{subfigure}{0.49\textwidth}\centering
		\begin{tikzpicture}[baseline=7.5ex]
		    \vertex (a) at (0.5,2.5){};
		    \vertex (b) at (2.5,2.5){};
		    \vertex (c) at (2.5,0.5){};
		    \vertex (d) at (0.5,0.5){};
		    \vertex (e) at (1.15,1.85){};
		    \vertex (f) at (1.85,1.85){};
		    \vertex (g) at (1.85,1.15){};
		    \vertex (h) at (1.15,1.15){};
		    \vertex (i) at (4.5,2.5){};
		    \vertex (j) at (4.5,0.5){};
		    \vertex (k) at (3.15,1.85){};
		    \vertex (l) at (3.85,1.85){};
		    \vertex (m) at (3.85,1.15){};
		    \vertex (n) at (3.15,1.15){};
		    \path
		    	(a) edge (b) edge (e) edge (d)
		    	(b) edge (c) edge (f)
		    	(c) edge (d) edge (g)
		    	(d) edge (h)
		    	(e) edge (f)
		    	(f) edge (g)
		    	(g) edge (h)
		    	(h) edge (e)
		    	(b) edge (i)
		    	(c) edge (j)
		    	(i) edge (j)
		    	(k) edge (l) edge (n) edge (b)
		    	(l) edge (m) edge (i)
		    	(m) edge (n) edge (j)
		    	(n) edge (c)
		    ;
		\end{tikzpicture}
    \end{subfigure}
    \caption{Two copies of a cube glued along a four-cycle, and along a single edge.
    Theorem \ref{thm:bipartiteCliqueSum} implies that both have two and three as their typical ranks.}
    \label{fig:gluingCubes}
\end{figure}
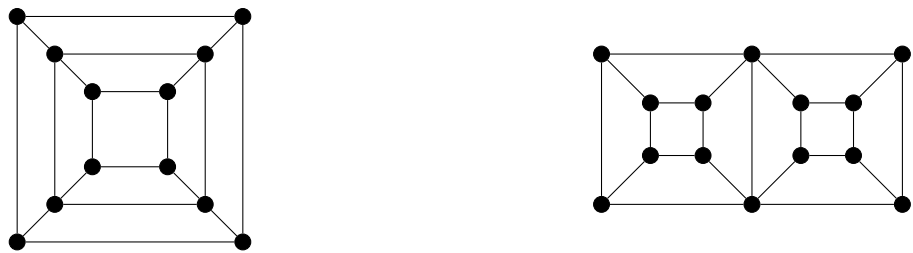
\end{exm}

\begin{exm}
The assumption that $\max\{r_1,r_2\} \ge \max\{m,n\}$ in Theorem \ref{thm:bipartiteCliqueSum} is important. 
Let $G$ be the clique sum of $K_{2,4}$ and $K_{3,4}$ with two edges removed, along a $K_{1,3}$ such that a $G$-partial matrix looks like this
\[
\begin{pmatrix}
* & * & * & * & ? \\
* & * & * & * & * \\
? & * & * & * & ? \\
? & * & * & ? & * 
\end{pmatrix}.
\]
Both blocks have generic completion rank $2$, but the clique sum has generic completion rank $3$, since $G$ contains a $K_{3,3}$.
\end{exm}

The following lemma follows quickly from the ``Cone Lemma'' in \cite{kalai2016bipartite}.
However, we provide an elementary proof using arguments from linear algebra directly.
We will make use of it in the next section.

\begin{lem}\label{lem:projFullDim}
Let $G = ([n]\sqcup [n],E)$ be a bipartite graph and let $r$ be an integer such that the differential of the projection $\pi_G$ restricted to the variety of $n\times n$ matrices of rank at most $r$ is generically injective, i.e.~injective outside of a proper Zariski closed set.
    Define $G' = ([n+1]\sqcup [n+1], E\cup ([n+1] \times \{n+1\}) \cup (\{n+1\} \times [n]))$, the bipartite graph obtained from $G$ by adding a fully connected vertex to each part.
    Then the following statements hold.
    \begin{enumerate}[(a)]
    	\item The differential of $\pi_{G'}$ restricted to the variety of $(n+1)\times (n+1)$ matrices of rank at most $r+1$ 
    	is generically injective.
    	\item If $G$ is maximal (in the partial order given by containment of edge sets)
    	among all bipartite graphs on $[n]\sqcup  [n]$ of generic completion rank $r$,
    	then $G'$ is maximal among all bipartite graphs on $[n+1]\sqcup[n+1]$ of generic completion rank $r+1$.
    \end{enumerate}
\end{lem}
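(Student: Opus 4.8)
The plan is to handle the two parts in order, treating part (a) as the technical core and deducing part (b) from it by a dimension count. Throughout I use the tangent-space description from Algorithm~\ref{alg:gcr} together with the differential characterization of generic completion rank in Remark~\ref{rem:differentialGCR}.

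For part (a), I would fix a generic rank-$(r+1)$ matrix $A'\in\M_{r+1}^{(n+1)\times(n+1)}$, with kernel $V'\subset\C^{n+1}$, and let $W'\subset(\C^{n+1})^\ast$ be the annihilator of its column space; both have dimension $n-r$. A tangent vector $B\in T_{A'}\M_{r+1}^{(n+1)\times(n+1)}$ is exactly an $(n+1)\times(n+1)$ matrix with $w' B v'=0$ for all $v'\in V'$, $w'\in W'$, and $d\pi_{G'}$ is injective at $A'$ precisely when the only such $B$ with $\pi_{G'}(B)=0$ is $B=0$. Since in $G'$ the last row and last column are fully specified while the edges of $G$ fill the top-left $n\times n$ block, the condition $\pi_{G'}(B)=0$ forces
\[
B=\begin{pmatrix} B_0 & 0\\ 0 & 0\end{pmatrix},\qquad \pi_G(B_0)=0,
\]
with $B_0$ an $n\times n$ matrix.

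Writing $v'=(v,t)$ and $w'=(w,s)$ with $v\in\C^n$ and $w\in(\C^n)^\ast$, the block form of $B$ gives $w'Bv'=wB_0v$, so the tangent condition becomes $wB_0v=0$ for all $v\in V_0$ and $w\in W_0$, where $V_0\subseteq\C^n$ and $W_0\subseteq(\C^n)^\ast$ are the images of $V'$ and $W'$ under forgetting the last coordinate. For generic $A'$ the subspaces $V'$, $W'$ meet the last coordinate axis trivially, so these projections are injective and $\dim V_0=\dim W_0=n-r$; hence there is a rank-$r$ matrix $A_0$ with $\ker A_0=V_0$ and column space $W_0^\perp$, and the displayed conditions say exactly that $B_0\in T_{A_0}\M_r^{n\times n}$ with $\pi_G(B_0)=0$. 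The injectivity hypothesis on $d\pi_G$ then forces $B_0=0$, hence $B=0$. Because the locus where $d\pi_{G'}$ is injective is Zariski open, it suffices to exhibit a single such good $A'$: starting from a generic $A_0$ in the injectivity locus, lift bases of $V_0$ and $W_0$ to $\C^{n+1}$ with generic last coordinates to produce $V'$, $W'$, and take any $A'$ with kernel $V'$ and image $W'^\perp$.

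For part (b), I first observe that the standing injectivity hypothesis together with $\gcr(G)=r$ forces $|E|=\dim\M_r^{n\times n}=2rn-r^2$: generic injectivity makes $\pi_G|_{\M_r}$ generically finite, so its image has dimension $\dim\M_r$, while $\gcr(G)=r$ makes that image dense, giving $\dim\M_r=|E|$. Counting the $2n+1$ edges added to form $G'$ yields $|E'|=2rn-r^2+2n+1$, and a direct computation shows this equals $\dim\M_{r+1}^{(n+1)\times(n+1)}=2(r+1)(n+1)-(r+1)^2$. Now part (a) makes $d\pi_{G'}|_{\M_{r+1}}$ generically injective, so its image has dimension $\dim\M_{r+1}=|E'|=\dim\C^{E'}$ and is therefore dense, whence $\gcr(G')\le r+1$; on the other hand $\dim\M_r^{(n+1)\times(n+1)}=2r(n+1)-r^2<|E'|$ (using $r\le n$), so $\pi_{G'}(\M_r)$ cannot be dense and $\gcr(G')>r$, giving $\gcr(G')=r+1$. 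Finally, since $|E'|=\dim\M_{r+1}^{(n+1)\times(n+1)}$, every proper supergraph of $G'$ has more than $\dim\M_{r+1}$ edges, so by Proposition~\ref{prop:dimcount}(1) it has generic completion rank at least $r+2$; therefore $G'$ is maximal among bipartite graphs on $[n+1]\sqcup[n+1]$ of generic completion rank $r+1$.

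The main obstacle I anticipate is the genericity bookkeeping in part (a): one must verify that as $A'$ ranges over generic rank-$(r+1)$ matrices the induced data $(V_0,W_0)$, equivalently the matrix $A_0$, is generic, so that the injectivity hypothesis on $d\pi_G$ genuinely applies. Making this precise — either through the dominant map $A'\mapsto(V_0,W_0)$ onto a product of Grassmannians, or by explicitly lifting one generic $A_0$ to a single good $A'$ and invoking openness of the injectivity locus — is the step requiring the most care; the remainder is linear algebra and the dimension counts above.
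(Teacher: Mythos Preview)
Your proposal is correct and follows essentially the same route as the paper: reduce part~(a) to the $n\times n$ injectivity hypothesis by observing that $\pi_{G'}(B)=0$ kills the last row and column and that the tangent conditions for $B$ at $A'$ become tangent conditions for the top-left block $B_0$ at an $n\times n$ rank-$r$ matrix built from the truncated kernel and cokernel; then deduce part~(b) by the dimension identity $\dim\M_{r+1}^{(n+1)\times(n+1)}=\dim\M_r^{n\times n}+2n+1=|E'|$. The genericity issue you flag is exactly the one the paper addresses, and your suggested fix (lift a generic $A_0$ to some $A'$ and invoke openness) is equivalent to the paper's observation that deleting the last row and column surjects $\M_{r+1}^{(n+1)\times(n+1)}$ onto $\M_r^{n\times n}$.
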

Before proving Lemma \ref{lem:projFullDim}
we pause to recall a basic fact from linear algebra.

\begin{lem}\label{lem:imageAndKernel}
    Let $I,K$ be linear subspaces of $\kk^n$ of dimensions $d$ and $n-d$ respectively.
    Then there exists a matrix with image $I$ and kernel $K$.
\end{lem}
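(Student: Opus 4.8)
The plan is to build the desired matrix explicitly by prescribing its action on a carefully chosen basis of $\kk^n$. First I would pick a basis $k_1,\dots,k_{n-d}$ of $K$ and extend it to a basis $k_1,\dots,k_{n-d},u_1,\dots,u_d$ of the whole space $\kk^n$; independently I would pick any basis $w_1,\dots,w_d$ of $I$. I would then define the linear map $A\colon\kk^n\to\kk^n$ on this basis by setting $A(k_j)=0$ for $j=1,\dots,n-d$ and $A(u_\ell)=w_\ell$ for $\ell=1,\dots,d$, and take the matrix to be the one representing $A$ in the standard basis. This is well defined because we have specified $A$ on a full basis of $\kk^n$.

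Next I would verify the two required properties. The image of $A$ is spanned by $A(u_1),\dots,A(u_d)=w_1,\dots,w_d$ together with the zero vectors $A(k_1),\dots,A(k_{n-d})$, so it equals $\lspan(w_1,\dots,w_d)=I$, as desired. For the kernel, the inclusion $K\subseteq\ker A$ is immediate from $A(k_j)=0$. To obtain equality I would invoke the rank--nullity theorem: since the image has dimension $d$, the kernel has dimension $n-d=\dim K$. Combined with $K\subseteq\ker A$, the matching dimensions force $\ker A=K$.

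The only point requiring care, and the reason I would prove the kernel statement indirectly, is that $I$ and $K$ need not be complementary subspaces; they may well intersect nontrivially, so there is no projection onto $I$ along $K$ to exploit. Pinning down the kernel therefore proceeds through the dimension count supplied by rank--nullity rather than by direct inspection of the vectors annihilated by $A$. Apart from this observation the argument is entirely elementary and valid over an arbitrary field $\kk$, which is all that is needed for the application in Lemma~\ref{lem:projFullDim}.
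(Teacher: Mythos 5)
Your proof is correct, and it takes a cleaner route than the paper's. The paper works entirely in coordinates: it sets the first $d$ columns of the matrix equal to a basis $e_1,\dots,e_d$ of $I$, forms the matrix $F$ whose columns are a basis of $K$, and then solves the linear system forcing $MF=0$ row by row for the remaining entries --- a step that requires reordering rows so that the bottom $(n-d)\times(n-d)$ block of $F$ is nonsingular. Your argument instead extends a basis of $K$ to a basis of $\kk^n$ and defines the linear map on that basis (kernel vectors to $0$, complementary vectors to a basis of $I$), which sidesteps the reordering and nonsingularity bookkeeping entirely. Both proofs close with the same dimension count: the paper's construction gives $I\subseteq\operatorname{im}M$ and $K\subseteq\ker M$ and rank--nullity forces equality, exactly as in your kernel argument. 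What your version buys is brevity and basis-independence; what the paper's version buys is an explicit entrywise recipe, in keeping with the matrix-completion flavor of the rest of the paper (fix some entries, solve for the rest). Your closing remark that $I$ and $K$ need not be complementary, so no projection argument is available, is a genuine and correct observation that the paper leaves implicit.
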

\begin{proof}
    Let $e_1,\dots,e_d$ be a basis of $I$ and $f_1,\dots,f_{n-d}$ be a basis of $K$.
    Let $F$ be the $n\times (n-d)$ matrix with columns $f_1,\dots,f_{n-d}$.
    Let $B$ be the $(n-d)\times (n-d)$ matrix consisting of the bottom $n-d$ rows of $F$,
    and $T$ be the $d\times (n-d)$ matrix consisting of the top $d$ rows of $F$.
    Reordering rows of $F$ if necessary, we may without loss of generality assume that $B$ is nonsingular.
    Now we construct our matrix $M$ with image $I$ and kernel $K$.
    Set the first $d$ columns of $M$ equal to $e_1,\dots,e_d$.
    Let $\bfv^i$ denote the row vector consisting of the
    $d$ specified entries of the $i$-th row of $M$.
    Set the unspecified entries of the $i$-th row of $M$ to be the unique solution $\bfx$ to the
    linear equation $\bfv^i T + \bfx B = 0$.
\end{proof}

\begin{proof}[Proof of Lemma \ref{lem:projFullDim}]
    Let $M$ be a generic $(n+1)\times(n+1)$ matrix of rank $r+1$ and set $k = n-r$. 
    Choose linearly independent linear functionals $c_1,\dots,c_k \in (\kk^{n+1})^*$,
    whose vanishing defines
    the column span of $M$ and choose a basis $v_1,\dots,v_k \in \kk^{n+1}$ of the kernel of $M$.
    Now let $A \in T_M V_{r+1}^{n+1}$ be an element of the tangent space to the variety $V_{r+1}^{n+1}$ of $(n+1)\times(n+1)$ matrices of rank at most $r+1$ satisfying $\pi_{G'}(A) = 0$.
    We need to show that $A = 0$.
    The condition $A \in T_M V_{r+1}^{n+1}$
    is equivalent to $c_iAv_j = 0$ for all $i,j$.
    Since the last row and last column of a $G'$-partial matrix are completely specified by construction of $G'$,
    the last row and column of $A$ must be zero.
    Let us denote by  $c_i'$ and $v_j'$ the vectors
    obtained from $c_i$ and $v_j$, respectively, by removing their last coordinate.
    Let $A'$ be the matrix obtained by removing the last row and column of $A$.
    Since $\pi_{G'}(A) = 0$,
    the condition that $A \in T_M V_{r+1}^{n+1}$
    is equivalent to $c_i' A' v_j' = 0$ for all $i,j$.
    If $r = 0$, the proposition is clearly true so we assume $r \ge 1$.
    It therefore follows from genericity of $M$ that $\{c_1',\dots,c_k'\}$
    and $\{v_1',\dots,v_k'\}$ are both linearly independent sets.
    So Lemma \ref{lem:imageAndKernel} implies the existence of an $n\times n$ matrix $M'$ of rank $r$,
    whose column span is cut out by $c_1',\dots,c_k'$ and whose kernel is spanned
    by $v_1',\dots,v_k'$.
    The condition that $c_i' A' v_j' = 0$ for all $i,j$ is equivalent
    to the condition that $A'$ is in the tangent space to $V_r^n$ at $M'$, the variety of $n\times n$ matrices of rank at most $r$.
    Since $M'$ has rank $r$, it is a smooth point of $V_r^n$. Genericity of $M$ in $V_{r+1}^{n+1}$ implies genericity of $M'$ in $V_r^n$ because every $n\times n$ matrix of rank $r$ can be obtained from an $(n+1)\times(n+1)$ matrix of rank $r+1$ by deletion of the last row and last column.
    So $\pi_E(A') = 0$ implies $A' = 0$ by our injectivity assumption on the differential of $\pi_G$ restricted to $V_r^n$.
    Since $A' = 0$ implies $A = 0$, we have proved part (a). 

	The second statement follows from the first by a dimension count. If the differential of $\pi_G$ restricted to $V_r^n$ is not injective, then add an edge to $G$ in such a way that the rank of the differential increases. The maximality of $G$ with respect to the generic completion rank implies that the differential is in fact a bijection. The injectivity from part (a) together with a dimension count shows in fact that the differential of $\pi_{G'}$ restricted to $V_{r+1}^{n+1}$ is also bijective. Indeed, the dimension of $V_{r+1}^{n+1}$ turns out to be $\dim(V_r^n) + 2n + 1$, i.e.~the number of edges of $G'$. This proves that the generic completion rank of $G'$ is $r+1$ and if we add an edge to $G'$, the generic completion rank will necessarily increase, see~Remark~\ref{rem:differentialGCR}.
\end{proof}

We now give a lemma that is useful for showing that a given bipartite graph $([m],[n],E)$
satisfying $\# E = r(m+n-1)$ has generic completion rank $r$.
Given a bipartite graph $G$ and subsets $A$ and $B$ of the two parts of $G$,
denote by $G_{A,B}$ the induced subgraph on $A\cup B$.
We use it in the next section to show that certain bipartite circulant graphs
have generic completion rank predicted by the dimension count.

\begin{lem}\label{lem:gcrExpected}
	Let $m,n,r$ be integers with $r \le m,n$ and let
	$G = ([m],[n],E)$ be a bipartite graph such that $|E| = r(m+n-r)$.
	Then $\gcr(G) = r$ if there exist set partitions
	$P_1,\dots,P_{m-r}$ of $[m]$ and $Q_1,\dots,Q_{n-r}$ of $[n]$ such that
	each bipartite  subgraph $G_{P_i,Q_j}$ on parts $P_i$ and $Q_j$ contains exactly one non-edge of $G$,
	and every non-edge of $G$ lies in some such $G_{P_i,Q_j}$.
\end{lem}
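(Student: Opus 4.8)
The plan is to prove the two inequalities $\gcr(G)\ge r$ and $\gcr(G)\le r$ separately. The lower bound is immediate from Proposition~\ref{prop:dimcount}(1): writing $g(k)=k(m+n)-k^2$, the hypothesis $r\le m,n$ gives $2r\le m+n$, so $g(k)-g(k-1)=(m+n)-(2k-1)>0$ for all $k\le r$, i.e.\ $g$ is strictly increasing on $\{0,1,\dots,r\}$. Since $g(r)=r(m+n-r)=|E|$, no $k<r$ satisfies $g(k)\ge|E|$, so the smallest such $k$ is exactly $r$ and $\gcr(G)\ge r$.

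For the upper bound I would use the differential criterion of Remark~\ref{rem:differentialGCR}: it suffices to exhibit a single rank-$r$ matrix $M$ at which the restriction $\pi_G|_{T_M\M^{m\times n}_r}\colon T_M\M^{m\times n}_r\to\kk^E$ is injective. Indeed $\dim T_M\M^{m\times n}_r=r(m+n-r)=|E|=\dim\kk^E$, so at any fixed $M$ injectivity of this linear map is equivalent to bijectivity, hence to surjectivity. Bijectivity is a Zariski-open condition on $M\in\M^{m\times n}_r$, so producing one matrix where it holds shows the differential is surjective at a generic point, giving $\gcr(G)\le r$ by Remark~\ref{rem:differentialGCR}.

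The combinatorial input is that $mn-|E|=(m-r)(n-r)$ is exactly the number of blocks $P_i\times Q_j$, so the hypothesis sets up a bijection between these blocks and the non-edges of $G$: for each $(i,j)\in[m-r]\times[n-r]$ let $(s_{ij},t_{ij})$ denote the unique non-edge with $s_{ij}\in P_i$ and $t_{ij}\in Q_j$. Any $A$ in the kernel of $\pi_G|_{T_M\M^{m\times n}_r}$ is supported on the non-edges, so its only possibly nonzero entries are $a_{ij}:=A_{s_{ij}t_{ij}}$. Now I would choose $M$ adapted to the partitions: using Lemma~\ref{lem:imageAndKernel}, build a rank-$r$ matrix $M$ whose kernel has a basis $v_1,\dots,v_{n-r}$ with $v_j$ supported on the columns $Q_j$, and whose column span is the common zero set of functionals $c_1,\dots,c_{m-r}\in(\kk^m)^*$ with $c_i$ supported on the rows $P_i$ (the $v_j$ have disjoint supports, hence span an $(n-r)$-dimensional kernel, and the $c_i$ cut out an $r$-dimensional image). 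With this choice the tangent condition $c_iAv_j=0$ reads $\sum_{s\in P_i}\sum_{t\in Q_j}(c_i)_sA_{st}(v_j)_t=0$, and since $A$ vanishes off the non-edges while $P_i\times Q_j$ contains only the non-edge $(s_{ij},t_{ij})$, it collapses to $(c_i)_{s_{ij}}(v_j)_{t_{ij}}\,a_{ij}=0$. For generic choices of the free entries the coefficients $(c_i)_{s_{ij}}$ and $(v_j)_{t_{ij}}$ are nonzero, forcing $a_{ij}=0$ for every $(i,j)$ and thus $A=0$.

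I expect the only real subtlety to be the construction of the adapted matrix $M$ together with the verification that the relevant coordinates $(c_i)_{s_{ij}}$ and $(v_j)_{t_{ij}}$ do not vanish: one must check that prescribing a kernel and a column span with the required disjoint supports is simultaneously achievable (which is exactly what Lemma~\ref{lem:imageAndKernel} provides) and that these particular coordinates are generic enough to be nonzero. Everything else is the bookkeeping that the equations $c_iAv_j=0$ decouple into one scalar equation per non-edge, which is precisely what the partition hypothesis guarantees.
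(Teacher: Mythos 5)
Your proof is correct and takes essentially the same approach as the paper: the lower bound via the dimension count of Proposition~\ref{prop:dimcount}, and the upper bound by constructing a rank-$r$ matrix whose kernel vectors $v_j$ and cokernel functionals $c_i$ are supported on the blocks $Q_j$ and $P_i$, so that the tangent-space equations $c_i A v_j = 0$ decouple into one scalar equation per non-edge. The only cosmetic difference is that the paper takes $c_i$ and $v_j$ to be the characteristic $0/1$ vectors of the blocks, making the coefficient matrix of the linear system a permutation matrix, where you instead invoke genericity to ensure the coefficients $(c_i)_{s_{ij}}(v_j)_{t_{ij}}$ are nonzero.
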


\begin{ex}\label{ex:partition}
Before proving Lemma \ref{lem:gcrExpected},
it will be helpful to have an example illustrating the statement.
Let $m=n=4$, let $r = 2$, and let $G = ([4],[4],E)$ be the
bipartite graph with $E = \{(i,j): i \neq j\}$.
Then define $P_1 := \{1,2\}, P_2 := \{3,4\},Q_1 := \{1,3\}$, and $Q_2 := \{2,4\}$.
The unique non-edge of $G_{P_1,Q_1}$ is $(1,1)$,
the unique non-edge of $G_{P_1,Q_2}$ is $(2,2)$,
the unique non-edge of $G_{P_2,Q_1}$ is $(3,3)$,
and the unique non-edge of $G_{P_2,Q_2}$ is $(4,4)$.
Note that all non-edges of $G$ are accounted for.
According to Lemma~\ref{lem:gcrExpected}, this implies $\gcr(G) = 2$.
\end{ex}

\begin{proof}[Proof of Lemma \ref{lem:gcrExpected}]
	Since $G$ has $r(m+n-r)$ edges,
	the lemma is equivalent to the statement that $\M^{m\times n}_r$
	has the same dimension as $V$,
	the Zariski closure of its image under the projection $\pi_{G}$.
	We proceed by showing that $\dd \pi_{G}: T_M \M^{m\times n}_r \rightarrow T_{\pi_{G}(M)}V$,
	the differential of $\pi_{G} \vert_{\M^{m\times n}_r} \M^{m\times n}_r \to \C^E$,
	is one-to-one for a particular choice of smooth point $M$.
	For now, we leave $M$ unspecified. Since $\pi$ is linear, we abuse notation by writing $\pi$ instead of $\dd \pi$.

    Let $A \in T_M \M^{m\times n}_r$
	and let $B \in (\dd \pi)^{-1}(\pi(A))$.
	Our goal is to show that for a particular choice of $M$,
	$B$ must equal $A$.
	By \cite[Example~14.16]{HarMR0463157}, a matrix $C$ is in $T_M \M^{m\times n}_r$
	if and only if $C$ maps the kernel of $M$ into the image of $M$.
	So let $v_1,\dots,v_{n-r} \in \cc^{n}$ span the kernel of $M$
	and let $c_1,\dots,c_{m-r} \in (\cc^{m})^*$ be linear functionals
	whose vanishing cuts out the column span of $M$.
	The entries of $B$ must satisfy the $(n-r)(m-r)$ linear equations given by $c_i B v_j = 0$.
	If we plug in entries of $B$ corresponding to the edges in $G$,
	this gives us a system of $(n-r)(m-r)$ affine-linear equations that must be satisfied by the $(n-r)(m-r)$
	entries of $B$ corresponding to the non-edges of $G$.
	We denote the $(n-r)(m-r)\times(n-r)(m-r)$ coefficient matrix of this linear system by $C$.
	The constant terms in this linear system are determined by $v_i$, $c_j$,
	and the entries of $B$ corresponding to edges in $G$.
	Note that the entries in these positions are the same in $B$ and $A$.
	Therefore, if $C$ is nonsingular, then $B = A$.
	We now finish the proof by constructing a smooth point $M$ of $\M^{m\times n}_r$
	such that the corresponding $C$ is nonsingular.

	For $i = 1,\dots,n-r$, define $c_i$ to be the characteristic row vector of $P_i$
	and for $j = 1,\dots,m-r$, define $v_j$ to be the characteristic column vector of $Q_j$.
	Note that each set $\{c_i\},\{v_j\}$ is linearly independent.
	So Lemma \ref{lem:imageAndKernel} implies that there exists an $m\times n$ matrix $M$ of rank $r$
	whose span is cut out by the $c_i$s and whose kernel is spanned by the $v_j$s
	(see below for an example).
	Moreover $M$ is a smooth point of $\M^{m\times n}_r$ \cite[Example~14.16]{HarMR0463157}.
	Our hypotheses imply that the coefficient matrix $C$ corresponding to $\{c_i\}$ and $\{v_j\}$
	is a permutation matrix.
	In particular, $C$ is nonsingular.
\end{proof}

\begin{ex}\label{ex:partitionContinued}
We illustrate the construction $M$ from the proof of Lemma \ref{lem:gcrExpected}
for the situation given in Example \ref{ex:partition}.
Here we would have $c_1 = \begin{pmatrix}1 & 1 & 0 & 0 \end{pmatrix}$,
$c_2 = \begin{pmatrix}0& 0& 1 & 1 \end{pmatrix}$,
$v_1= \begin{pmatrix}1& 0& 1 & 0 \end{pmatrix}^T$,
and $v_2= \begin{pmatrix}0 & 1& 0& 1 \end{pmatrix}^T$.
Any matrix $M$ whose column span is orthogonal to $c_1$ and $c_2$
and whose kernel is spanned by $v_1$ and $v_2$ is an acceptable choice.
One such example is
\[
    M=\begin{pmatrix}
        1&0&-1&0\\
        -1&0&1&0\\
        0&1&0&-1\\
        0&-1&0&1
    \end{pmatrix}.
\]
\end{ex}

\section{Planar bipartite, bipartite chordal, and circulant graphs}\label{sec:examples}
In this section we discuss three classes of bipartite graphs with respect to their typical and generic completion ranks.
First we make a brief note about bipartite planar graphs.
Then we discuss bipartite chordal graphs, which have a long history in the subject of matrix completion
\cite{laurent2001,woerdeman1987lower,woerdeman2007matrix}.
As we will see, they all have generic completion rank predicted by the maximal specified submatrix.
Moreover, they have only one typical rank.
We then turn to bipartite circulant graphs.
We identify a subset of them whose generic completion ranks are predicted by a dimension count
and conjecture that this is the case for all such graphs.
We also discuss why this class of graphs might offer a promising direction
in the search for bipartite graphs exhibiting many typical ranks.

\subsection{Planar bipartite graphs}
\begin{thm}[{\cite[Theorem 4.1]{kalai2016bipartite}}]\label{thm:planarGcr2}
Let $G$ be a planar bipartite graph. Then $\gcr(G) = 2$.
\end{thm}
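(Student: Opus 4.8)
The plan is to prove the two inequalities $\gcr(G)\ge 2$ and $\gcr(G)\le 2$ separately, with essentially all the work in the upper bound. (Strictly, the equality needs $G$ to contain a cycle; a forest has generic completion rank $1$, which is the harmless degenerate case.) The lower bound is the easy direction: since $\gcr$ does not decrease when edges are added and any even cycle $C_{2k}\subseteq G$ already satisfies $\gcr(C_{2k})=2$ by the dimension count of Proposition~\ref{prop:dimcount}(1) (there $|E|=2k>2k-1=m+n-1$, ruling out $k=1$), every bipartite graph with a cycle has $\gcr(G)\ge 2$. So the substantive claim is the upper bound $\gcr(G)\le 2$.

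For the upper bound I would use the differential criterion of Remark~\ref{rem:differentialGCR}: it suffices to show that for a generic rank-$2$ matrix $M=PQ^{T}$, with $P\in\C^{m\times 2}$ and $Q\in\C^{n\times 2}$ of full rank, the restricted projection $\pi_G\colon T_M\M^{m\times n}_2\to\C^E$ is surjective. Writing a tangent vector as $A$ with $A_{ij}=\dot p_i\cdot q_j+p_i\cdot\dot q_j$, where $p_i,q_j\in\C^2$ are the rows of $P,Q$, surjectivity onto $\C^E$ is equivalent by duality to the vanishing of the space of \emph{equilibrium stresses}: tuples $(\lambda_{ij})_{(i,j)\in E}$ with $\sum_{j\,:\,(i,j)\in E}\lambda_{ij}q_j=0$ for every row vertex $i$ and $\sum_{i\,:\,(i,j)\in E}\lambda_{ij}p_i=0$ for every column vertex $j$. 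Indeed, pairing a functional $\lambda$ with $A$ and collecting the $\dot p_i$ and $\dot q_j$ terms yields exactly these conditions. Thus the theorem reduces to the combinatorial statement that a planar bipartite graph carries no nonzero generic equilibrium stress, i.e.\ that $G$ is independent in the generic rank-$2$ bipartite rigidity matroid.

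The numerical input is Euler's formula: a planar bipartite graph has girth at least $4$, so $|E|\le 2(m+n)-4=\dim\M^{m\times n}_2$, and the analogous bound holds for every subgraph with enough vertices on each side, since every subgraph is again planar bipartite. This matches exactly the dimension of $T_M\M^{m\times n}_2$, so the count is never an obstruction; what remains is to upgrade this hereditary $(2,4)$-sparsity to genuine stress-freeness for generic $P,Q$.

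I expect this last step to be the main obstacle, and it is where planarity enters essentially. I see two routes. The first is to prove a Laman-type theorem identifying the generic rank-$2$ bipartite rigidity matroid with the $(2,4)$-count matroid, so that hereditary $(2,4)$-sparsity becomes equivalent to independence; planar bipartite graphs are then independent by Euler. The second, closer to the machinery developed in this paper, is an induction on the number of vertices: every planar bipartite graph has a vertex $v$ of degree at most $3$, and deleting $v$ yields a planar bipartite $G'$ with $\gcr(G')\le 2$ by induction. If $\deg v\le 2$ one concludes with Corollary~\ref{cor:lowDegreeVertices}, and coning or gluing configurations are handled by Lemma~\ref{lem:projFullDim} and Theorem~\ref{thm:bipartiteCliqueSum}. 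The genuinely hard case is $\deg v=3$, which falls outside the reach of Corollary~\ref{cor:lowDegreeVertices} (there $k=3>2\ge\gcr(G')$) and must instead be treated by a direct stress-cancellation argument exploiting the planar embedding around $v$ — this is precisely the content of the cited theorem of Kalai, Nevo, and Novik.
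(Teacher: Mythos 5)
The first thing to note is that the paper contains no proof of this statement: Theorem~\ref{thm:planarGcr2} is imported verbatim from \cite[Theorem 4.1]{kalai2016bipartite}, so there is no internal argument to compare yours against. What you have written is, in effect, the bridge between the paper's formulation ($\gcr(G)=2$) and the formulation proved by Kalai, Nevo, and Novik (planar bipartite graphs carry no nonzero generic equilibrium stress). Your scaffolding is correct: the lower bound via monotonicity of $\gcr$ under adding edges plus Proposition~\ref{prop:dimcount} (with the forest caveat rightly noted), the duality computation identifying surjectivity of $\pi_G$ on a generic tangent space of $\M^{m\times n}_2$ with the vanishing of stresses (this matches Remark~\ref{rem:differentialGCR} and Algorithm~\ref{alg:gcr}), and the Euler-formula count $|E|\le 2(m+n)-4=\dim \M^{m\times n}_2$. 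But you should be clear that this is exactly where your argument, like the paper, stops: the entire difficulty lies in upgrading hereditary $(2,4)$-sparsity to generic stress-freeness, and neither of your two proposed routes is carried out. Route 1 presupposes a Laman-type theorem identifying the generic rank-$2$ bipartite completion matroid with the $(2,4)$-count matroid; no such theorem is proved in this paper or available in \cite{Be} or \cite{kalai2016bipartite}, so it cannot be leaned on. Route 2 founders, as you say yourself, precisely at degree-$3$ vertices, which is where Corollary~\ref{cor:lowDegreeVertices} gives out. So judged as a standalone proof, your proposal has a genuine gap---the key step is deferred to the citation---but since the paper defers to exactly the same citation without any argument at all, your treatment is at the same level of completeness as the paper's, with the added value of making explicit why the stress-freeness statement of \cite{kalai2016bipartite} is equivalent to the $\gcr$ statement used here.
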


Theorems \ref{thm:maxTypicalBound} and \ref{thm:planarGcr2} imply that two and three are the
only possible typical ranks of a planar bipartite graph.
Hence we pose the following problem.

\begin{pr}\label{pr:planar3AsTypical}
    Characterize the planar bipartite graphs that exhibit $3$ as a typical rank.
\end{pr}

Corollary \ref{cor:lowDegreeVertices} implies that $G$ has three as a typical rank
if and only if its $3$-core has three as a typical rank.
Hence to solve Problem \ref{pr:planar3AsTypical},
it suffices to restrict attention to the case of planar bipartite graphs with
minimum vertex degree three.

\subsection{Bipartite chordal graphs}\label{subsec:bipartitechordal}
A bipartite graph $G$ is said to be \emph{bipartite chordal} if every induced cycle has length $4$.
Equivalently, $G$ can be constructed by gluing together several complete bipartite graphs
along common cliques. For more on bipartite chordal graphs, see \cite{bisimplicialMR493395}.
The following theorem tells us that bipartite chordal graphs are quite simple in terms of their
matrix completion properties.

\begin{thm}\label{thm:bipartitechordal}
Let $G$ be a bipartite chordal graph.
\begin{enumerate}[(a)]
\item The generic completion rank of $G$ is the largest $r$ such that $G$ contains $K_{r,r}$ as an induced subgraph.
\item The only typical rank of $G$ is its generic completion rank.
\item Every $G$-partial matrix $M$ whose completely specified minors are nonzero
can be completed to rank $\gcr(G)$.
\end{enumerate}
\end{thm}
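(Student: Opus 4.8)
The plan is to prove the three parts in the order (c) $\Rightarrow$ (a) $\Rightarrow$ (b), since the completion statement in (c) carries all the technical weight and the other two follow formally. Throughout, let $r$ denote the largest integer with $K_{r,r}\subseteq G$ (which for complete bipartite subgraphs is the same as an induced $K_{r,r}$). Proposition~\ref{prop:dimcount}(2) already gives $\gcr(G)\ge r$. For the reverse inequality I would observe that a generic $G$-partial matrix has all completely specified minors nonzero, so once (c) is established it produces a completion of rank at most $r$ on a dense set, whence $\gcr(G)\le r$ and part (a) follows. For (b): part (c) shows that the completion rank is at most $r$ on the set $U\subseteq\R^E$ where every fully specified minor is nonzero, and $U$ is the complement of a proper real-algebraic subset, so $\R^E\setminus U$ has empty Euclidean interior. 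Since $\gcr(G)=r$ is the minimal typical rank by Proposition~\ref{prop:typicalRanksInterval}(a), and every partial matrix of completion rank exceeding $r$ lies in $\R^E\setminus U$, no rank larger than $r$ can be typical; hence $r$ is the unique typical rank.

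The heart of the argument is (c), which I would prove by induction on the number of vertices using the elimination structure of bipartite chordal graphs (see \cite{bisimplicialMR493395}): every such graph is either complete bipartite or has a simplicial vertex whose neighborhood lies inside a single complete bipartite block, and one may order the elimination so as to delete a simplicial vertex of degree at most $r$ while preserving both bipartite chordality and the clique number $r$, terminating at a complete bipartite graph. In the base case $G=K_{a,b}$ a $G$-partial matrix is fully specified, so its rank is exactly $r=\min(a,b)$ and every $r\times r$ minor is a completely specified minor, nonzero by hypothesis. For the inductive step I would pick such a simplicial vertex $v$ (say a column) of degree $k\le r$, set $G'=G-v$, and restrict $M$ to a $G'$-partial matrix $M'$, which still has all specified minors nonzero. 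By the inductive hypothesis $M'$ admits a rank-$r$ completion $A'$, and since $v$ contributes only $k\le r$ new specified entries, Lemma~\ref{lem:extracolumn}(1) would let me fill in the new column to obtain a rank-$r$ completion of $M$; Corollary~\ref{cor:lowDegreeVertices} simultaneously records that $\gcr(G)=\gcr(G')=r$.

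The \textbf{main obstacle} is that Lemma~\ref{lem:extracolumn}(1) requires $A'$ to have \emph{all} of its $r\times r$ minors nonzero, not merely the ones specified in $M'$, so the naive inductive hypothesis is too weak. I would therefore strengthen the inductive claim to: every such $M'$ admits a rank-$r$ completion \emph{all of whose $r\times r$ minors are nonvanishing}. The base case satisfies this automatically, as noted above. For the step, the matrices of rank $r$ with no vanishing $r\times r$ minor form a dense open subset of $\M^{m\times n}_r$ (this is exactly the phenomenon behind Lemma~\ref{lem:asGenericAsPossible}), and after applying Lemma~\ref{lem:extracolumn}(1) the remaining freedom in choosing the entries of the new column forms an affine space on which each newly created $r\times r$ minor is a polynomial; a generic choice in this affine space then avoids the finitely many minor-vanishing loci. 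The delicate point—where the restriction to bipartite chordal graphs does the real work—is verifying that no such minor is forced to vanish identically on the completion fibre under only the nonvanishing-specified-minors hypothesis; here the fact that the simplicial vertex is chosen so that the governing minors are supported on a single complete bipartite block lets the hypothesis on $M$ feed directly into the extension and guarantees the needed freedom survives the induction. I expect this verification to require the most care.
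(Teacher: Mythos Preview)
Your plan to derive (a) and (b) from (c) is correct and arguably cleaner than the paper's organization, which proves the three parts by parallel inductions; the derivations you sketch go through without difficulty. Both your approach and the paper's rest on the same elimination structure for bipartite chordal graphs (bisimplicial edges, \cite{bisimplicialMR493395}) together with Lemma~\ref{lem:extracolumn}.

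There are two problems with your inductive scheme for (c). First, the claim that one may always delete a simplicial vertex of degree at most $r$ \emph{while preserving the clique number} is not justified and is in fact not how the paper proceeds. If $\{v,w\}$ is a bisimplicial edge with $\deg(v)=\deg(w)=m$ and the $K_{m,m}$ on $N(v)\cup N(w)$ is the unique maximum biclique, then deleting $v$ drops the clique number to $m-1$. The paper handles this as a separate case: here $\gcr(G)=\gcr(G')+1$, the new column contributes one to the rank, and Lemma~\ref{lem:extracolumn}(1) is not even needed. You should build this case into your induction rather than assuming it away.

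Second, the obstacle you identify in applying Lemma~\ref{lem:extracolumn}(1) is genuine, but your proposed strengthening (carrying a completion with \emph{all} $r\times r$ minors nonzero through the induction) creates more work than necessary and leaves you with exactly the ``delicate point'' you cannot resolve. The bisimplicial structure gives something more direct: since $N(v)\times N(w)$ is complete in $G$, the block of $M'$ on rows $N(v)$ and columns $N_{G'}(w)$ is entirely specified, and your nonvanishing-minors hypothesis forces these $k=|N(v)|$ rows to be linearly independent in \emph{every} completion $A'$ whenever $k\le |N_{G'}(w)|$. Inspecting the proof of Lemma~\ref{lem:extracolumn}(1), linear independence of the $k$ rows where the new column is specified (rather than nonvanishing of all $r\times r$ minors) is what is actually used, so you can invoke that argument directly. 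The paper's own proof of (c) is terse on precisely this point.
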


\begin{proof}
We prove (a) by induction on the number of vertices of $G$. The base case $G = K_{1,1}$ is trivial. For the induction step, we use that every bipartite chordal graph has a bisimplicial edge, i.e.~an edge $\{v,w\}$ such that the induced graph on $N(v)\cup N(w)$, the union of the neighborhoods of $v$ and $w$, is a complete bipartite graph, see \cite[Corollary~5]{bisimplicialMR493395}. Let $m$ be the degree of $v$ and $n$ be the degree of $w$ and assume that $m\leq n$. Let $G'$ be the graph obtained from $G$ by deleting the vertex $v$. Then $G'$ is also bipartite chordal and the induced subgraph on $N(v)\cup(N(w)\setminus\{v\})$ is a complete bipartite graph $K_{m,n-1}$ because $\{v,w\}$ is bisimplicial. By induction, the generic completion rank of $G'$ is given by the largest bipartite clique in $G'$. Let $r$ be the generic completion rank of $G'$. 

Assume without loss of generality that $m \le n$.
In case that $m<n$, we know that $r\geq m$, so the claim follows by
Lemma~\ref{lem:extracolumn}, because the degree of $v$ is $m$.
If $m = n$ and $r\geq m$, the same argument applies.
If $m = n$ and $r<m$ then we must have $r = n-1$.
In this case, $G$ contains $K_{m,n} = K_{r+1,r+1}$ as an induced subgraph on $N(v)\cup N(w)$ and its generic completion rank is at most $r+1$, because the generic completion rank of $G'$ is $r$ and we are adding one vertex, which corresponds to a new row or column in the partial matrix. This shows (a).

To prove (b), we argue similarly by induction on the number of vertices of $G$ applying Lemma~\ref{lem:extracolumn}.

Finally, to prove (c), we also proceed by induction on the number of vertices by deleting 
vertices contained in bisimplicial edges.
The assumption, that every completely specified minor of $M$ is non-zero, clearly transfers to submatrices. So by applying Lemma~\ref{lem:extracolumn}, we get a completion of $M$ of rank at most $\gcr(G)$. Since $G$ contains a clique of size $K_{r,r}$ for $r = \gcr(G)$, the rank of $M$ is at least $\gcr(G)$ by assumption on the specified minors to be non-zero.
\end{proof}

\subsection{Bipartite circulant graphs}
In \cite{meyer2012zero}, a \emph{bipartite circulant graph} was defined to be a bipartite graph
whose biadjacency matrix is a circulant matrix (see \cite{meyer2012zero} for a definition of circulant matrix).
We consider the subset of such graphs whose parts $A$ and $B$ are disjoint copies of $[n]$,
where each $i \in A$ is adjacent to all vertices in $B$ \emph{aside from} $i,i+1,\dots,i+n-l-1$.
We denote such a graph by $G(n,l)$.
Our motivation for studying this class of graphs is that
it contains the smallest bipartite graphs we know of where the possibility
of multiple typical ranks is not immediately ruled out by a dimension count
and Corollary \ref{cor:core}.
For example, we have already seen in Example \ref{ex:4x4complete} that $G(4,3)$
exhibits both $2$ and $3$ as typical ranks.
The graphs $G(8,6)$ and $G(9,5)$ are the smallest graphs we know of
that could potentially have three typical ranks,
although we are still unable to determine whether or not they do.
The number of edges in $G(n,l)$ is $nl$, and so a dimension count says that its generic completion rank
is at least $\lceil n - \sqrt{n^2-nl}\rceil$.
We begin by showing that this lower bound is obtained
in some cases.

\begin{prop}\label{prop:circulantIntegers}
	Let $n$ be a positive integer and choose another integer $k$
	such that $k$ divides $n$ and $n$ divides $k^2$.
	Define $l := n-\frac{k^2}{n}$.
	Then the generic completion rank of $G(n,l)$ is $n - \sqrt{n^2-nl} = n-k$.
\end{prop}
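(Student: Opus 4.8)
The plan is to apply Lemma~\ref{lem:gcrExpected} with target rank $r=n-k$; this yields the equality $\gcr(G(n,l))=n-k$ in one stroke (the value $n-k$ is also exactly the dimension-count lower bound of Proposition~\ref{prop:dimcount}(1)). First I would record the arithmetic. Since $l=n-\tfrac{k^2}{n}$ we get $n^2-nl=k^2$, so $n-\sqrt{n^2-nl}=n-k$, and the number of edges is $|E|=nl=n^2-k^2=(n-k)(n+k)$. Writing $m=n$ and $r=n-k$, this is precisely $r(m+n-r)$, so the hypothesis $|E|=r(m+n-r)$ of Lemma~\ref{lem:gcrExpected} is satisfied and it remains only to produce the two set partitions into $m-r=n-r=k$ blocks. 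Set $t=n/k$ and $d=n-l=k^2/n$; the divisibility assumptions $k\mid n$ and $n\mid k^2$ guarantee that $t$ and $d$ are integers with $t\mid k$, $k=td$, and $n=t^2d$.

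Next I would write down the partitions explicitly. Index both parts of $G(n,l)$ by $\Z/n$, so that the non-edges are exactly the pairs $(i,j)$ with $j-i\in\{0,1,\dots,d-1\}\pmod n$, of which there are $nd=k^2$. I would take the row partition to be by residue modulo $k$, namely $P_a=\{i:i\equiv a\pmod k\}$ for $a\in\Z/k$, and the column partition to be $Q_b=\gamma^{-1}(b)$ for the coloring $\gamma(j)=\big(j\bmod k\big)+d\lfloor j/k\rfloor\pmod k$. Each of these is a partition of $\Z/n$ into $k$ blocks of size $t$, so $\{c_i\},\{v_j\}$ in the proof of Lemma~\ref{lem:gcrExpected} are automatically linearly independent.

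The verification then reduces to showing that $(i,j)\mapsto(i\bmod k,\gamma(j))$ restricts to a bijection from the $k^2$ non-edges onto $\Z/k\times\Z/k$, which is equivalent to the requirement that every $G_{P_a,Q_b}$ contain exactly one non-edge. For a fixed row class $a$, the columns occurring in non-edges with $i\equiv a\pmod k$ form a set $C_a$ of $k$ distinct elements (a union of $t$ runs of length $d$), and I would show $\gamma$ is injective on $C_a$. Since any two distinct elements of $C_a$ have residues modulo $k$ within cyclic distance $d-1$, this follows once $\gamma$ is a proper coloring of the graph in which $j\sim j'$ whenever $j\bmod k$ and $j'\bmod k$ lie within cyclic distance $d-1$. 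Properness is a short modular computation: if $\gamma(j)=\gamma(j')$ while $j\sim j'$, then writing $j=qk+u$ and $j'=q'k+u'$ gives $u-u'\equiv d(q'-q)\pmod k$, so the reduced residue difference is simultaneously a multiple of $d$ (since $d\mid k$) and of absolute value at most $d-1$; hence it vanishes, forcing $u=u'$ and, using $k=td$, also $q=q'$, i.e.\ $j=j'$.

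The main obstacle is exactly this combinatorial core: designing the column coloring $\gamma$ so that it is injective on every comb $C_a$ at once, and checking this cleanly in spite of the wrap-around modulo $n$. It is here that the hypotheses $k\mid n$ and $n\mid k^2$ are essential, since they are precisely what make $d=k/t$ integral and what power the ``a multiple of $d$ of absolute value less than $d$ must be zero'' step. Once the partitions are in hand, the conclusion $\gcr(G(n,l))=n-k$ is immediate from Lemma~\ref{lem:gcrExpected}.
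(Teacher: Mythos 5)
Your proposal is correct, and it rests on the same pillar as the paper's proof: both reduce the statement to Lemma~\ref{lem:gcrExpected} via the edge count $|E| = nl = n^2-k^2 = r(2n-r)$ with $r = n-k$, and both take the row partition to be the residue classes modulo $k$. Where you genuinely diverge is in the column partition. Writing $d = k^2/n$ and $t = n/k$, the paper's block $Q_{ab}$ consists of the elements of the $a$-th consecutive segment of length $k$ that are congruent to $b$ modulo $d$ (an arithmetic progression of step $d$ inside one segment), and the check that each $G_{P_i,Q_{ab}}$ has at most one non-edge is then a one-line consequence of the inequality $k > d$, followed by the same counting argument you use. Your blocks are instead the fibers of the twisted-residue coloring $\gamma(j)=\bigl((j \bmod k)+d\lfloor j/k\rfloor\bigr) \bmod k$, which are genuinely different from the paper's (for $n=9$, $k=3$ the paper's blocks are the consecutive triples $\{0,1,2\},\{3,4,5\},\{6,7,8\}$, while yours are $\{0,5,7\},\{1,3,8\},\{2,4,6\}$), and the at-most-one-non-edge check becomes your proper-coloring computation, powered by $d \mid k$. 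I verified that computation, as well as the implicit bookkeeping that distinct non-edges in a fixed row class occupy distinct columns (which is what makes injectivity of $\gamma$ on each comb $C_a$ equivalent to the condition of the lemma); both are sound. Neither construction buys extra generality, since each uses exactly the two divisibility hypotheses; the paper's choice makes the key step slightly more immediate, while yours has the advantage of a closed-form coloring formula whose injectivity-on-combs formulation may adapt better to attempts at relaxing the divisibility assumptions toward Conjecture~\ref{conj:bipartiteCirculantExpected}.
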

\begin{proof}
    Note that $G(n,l)$ has $n^2-k^2$ edges
    which is equal to the dimension of $\M^{n\times n}_{n-k}$.
    Therefore it suffices to find two partitions of $[n]$ satisfying the hypotheses
    of Lemma \ref{lem:gcrExpected}.
    For $i = 1,\dots,k$, define $P_i$ as
    \[
    	P_i := \left\{i,i+k,\dots,i+\left(\frac{n}{k}-1\right)k\right\}.
    \]
    For $a = 1,\dots,\frac{n}{k}$ and $b = 1,\dots,\frac{k^2}{n}$,
    define $Q_{ab}$ as
    \[
    	Q_{ab} := \left\{(a-1)k + b, (a-1)k + b + \frac{k^2}{n},\dots,k(a-1)+b+\left(\frac{n}{k}-1\right)\frac{k^2}{n}\right\}.
    \]
    For each pair $(i,ab)$, the edges of the graph $G(n,l)_{P_i,Q_{ab}}$
    must be obtainable from the following expression,
    allowing $p$ and $q$ to range over $\{0,\dots,\frac{n}{k}-1\}$
    \[
		(i+pk, k(a-1) + b + q\frac{k^2}{n}).
 	\]
	This is a non-edge of $G(n,l)$ if and only if
	\[
		k(a-1) + b + q\frac{k^2}{n} - i-pk \in \{0,1,\dots,\frac{k^2}{n}-1\}.
	\]
	Since $k$ divides $n$, $k > \frac{k^2}{n}$ and therefore this can happen for at most one value of $(p,q)$.

	Since there are exactly as many pairs $(i,ab)$ as there are
	non-edges of $G(n,l)$,
	it now suffices to show that each non-edge of $G(n,l)$
	appears as a non-edge of some $G(n,l)_{P_i,Q_j}$.
	Note that each $j \in [n]$ can be expressed as $i+pk$ for some $p \in \{0,\dots,\frac{n}{k}-1\}$ and
	$i \in \{1,\dots,k\}$.
	Also, every $j \in [n]$ can be expressed as $k(a-1) + b + q\frac{k^2}{n}$ for some
	$q \in \{0,\dots,\frac{n}{k}-1\}$, $a \in \{1,\dots,\frac{n}{k}\}$,
	and $b \in \{1,\dots,\frac{k^2}{n}\}$.
	So for a given non-edge $(u,v)$ of $G$,
	we can choose $i$ such that $u \in P_i$ and $ab$ such that $v \in Q_{ab}$
	thus making $(u,v)$ a non-edge of $G(n,l)_{P_i,Q_j}$.
\end{proof}

It follows from Proposition \ref{prop:circulantIntegers} that $\gcr(G(8,6)) = 4$.
Since the $6$-core of $G(8,6)$ is nonempty,
Corollary \ref{cor:core} does not rule out the possibility that $G(8,6)$
has both $5$ and $6$ as typical ranks.
It would be interesting to know whether or not this is the case
because if so, this would provide us with the first example of a bipartite
graph exhibiting three or more typical ranks.
Hence we ask the following question.

\begin{ques}
    Does $G(8,6)$ exhibit $6$ as a typical rank?
\end{ques}

We believe that Proposition \ref{prop:circulantIntegers}
holds in more generality.
In particular, we make the following conjecture.

\begin{conj}\label{conj:bipartiteCirculantExpected}
    Bipartite circulant graphs of the form $G(n,l)$ have the generic completion rank predicted by the dimension count.
    That is, $\gcr(G(n,l)) = n-\left\lfloor\sqrt{n^2 - nl}\right\rfloor$.
\end{conj}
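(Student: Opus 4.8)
The plan is to prove matching lower and upper bounds. Throughout write $d := n-l$ and $r := n-\lfloor\sqrt{n^2-nl}\,\rfloor$, and set $k := n-r = \lfloor\sqrt{nd}\,\rfloor$ (the corank). The lower bound $\gcr(G(n,l))\ge r$ is immediate from Proposition~\ref{prop:dimcount}(1): since $|E| = nl$ and both parts have size $n$, the smallest integer $s$ with $2ns - s^2 \ge nl$ is $\lceil n-\sqrt{n^2-nl}\,\rceil = n-\lfloor\sqrt{n^2-nl}\,\rfloor = r$. So the whole content is the upper bound $\gcr(G(n,l))\le r$. The difficulty is that $|E| = nl$ is in general strictly smaller than $r(2n-r) = \dim\M^{n\times n}_r$, so Lemma~\ref{lem:gcrExpected} does not apply directly (it requires equality). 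I would circumvent this by adding edges. Note that $\gcr$ is monotone under adding edges: if $G\subseteq G'$ then the coordinate projection $\C^{E'}\to\C^{E}$ carries a dense image to a dense image, so $\gcr(G)\le\gcr(G')$. Hence it suffices to exhibit a bipartite graph $G^+\supseteq G(n,l)$ on $[n]\sqcup[n]$ with exactly $r(2n-r)$ edges and $\gcr(G^+) = r$; then $\gcr(G(n,l))\le\gcr(G^+) = r$.

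The non-edges of $G(n,l)$ form the band $N = \{(a,b): b-a\in\{0,1,\dots,d-1\}\bmod n\}$, of size $nd$. Since $G^+$ must have $|N^+| = n^2-r(2n-r) = (n-r)^2 = k^2 \le nd$ non-edges, I would obtain $G^+$ by retaining only $k^2$ of the band non-edges (turning the rest into edges). To invoke Lemma~\ref{lem:gcrExpected} for $G^+$ with rank $r = n-k$, I need set partitions $P_1,\dots,P_k$ of $[n]$ (rows) and $Q_1,\dots,Q_k$ of $[n]$ (columns) with exactly one retained non-edge in each of the $k^2$ cells $P_i\times Q_j$, and this retained non-edge must be an actual band non-edge of $G(n,l)$. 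The construction therefore reduces to the following statement. \textbf{Combinatorial Claim.} There exist partitions $\{P_i\}_{i=1}^{k}$ and $\{Q_j\}_{j=1}^{k}$ of $[n]$ into $k=\lfloor\sqrt{nd}\,\rfloor$ blocks each such that every cell $P_i\times Q_j$ contains at least one band non-edge, i.e.\ a pair $(a,b)$ with $a\in P_i$, $b\in Q_j$, and $b-a\in\{0,\dots,d-1\}\bmod n$. Given the claim, choosing one band non-edge in each cell produces $N^+$ with $|N^+| = k^2$ and exactly one non-edge per cell; since the cells $P_i\times Q_j$ tile $[n]\times[n]$, the resulting $G^+$ satisfies all hypotheses of Lemma~\ref{lem:gcrExpected}, giving $\gcr(G^+) = r$ and finishing the upper bound.

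To prove the claim I would generalize the arithmetic-progression partitions of Proposition~\ref{prop:circulantIntegers}. Encode the partitions by label maps $f,g\colon[n]\to[k]$ and write $S_a := \{g(a),g(a+1),\dots,g(a+d-1)\}$ for the set of column-blocks reachable from row $a$ through the band; then cell $(i,j)$ is covered iff $j\in\bigcup_{a:f(a)=i}S_a$, so the claim is equivalent to $\bigcup_{a:f(a)=i}S_a = [k]$ for every $i$. The budget is tight: each block $f^{-1}(i)$ has about $n/k$ rows and each $S_a$ has at most $d$ labels, so the coverage available per row-block is about $(n/k)\,d \approx \sqrt{nd} = k$, exactly what is needed to cover all of $[k]$. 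The design must therefore arrange the windows $S_a$ inside each row-block so as to tile $[k]$ with almost no overlap; when $k\mid n$ and $k^2 = nd$ this is precisely the exact tiling realized in Proposition~\ref{prop:circulantIntegers}, and the general case calls for a spread-out residue-class choice of $g$ matched to a residue-class choice of $f$, with the slack $nd-k^2\ge 0$ absorbing the non-divisibility.

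The main obstacle is exactly this near-tight covering step: when $nd$ is close to a perfect square and $k\nmid n$, the windows $S_a$ essentially cannot overlap, and I do not know a single construction that works uniformly for all $n$ and $l$. I would first settle the case $k\mid n$, where $n/k$ windows of size about $\lceil nd/k^2\rceil$ per block should be placed by an explicit modular formula, and then attempt general $n$ by padding or by a greedy/Hall-type existence argument for the covering. As a fallback that avoids $0/1$ partition vectors, I would use the differential criterion of Remark~\ref{rem:differentialGCR}: by the reformulation underlying the proof of Lemma~\ref{lem:gcrExpected}, $\gcr(G(n,l))\le r$ is equivalent to finding $k$-dimensional subspaces $W,U\subseteq\C^n$ such that the only matrix with column space in $W$, row space in $U$, and support contained in $E$ is zero. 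Taking $W,U$ spanned by characters $\chi_t = (\omega^{ta})_{a}$ adapted to the circulant structure turns this into the vanishing of structured Fourier determinants, which may be more tractable analytically but, I expect, carries the same essential difficulty as the covering claim.
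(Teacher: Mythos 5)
You have not proved the statement, and in fact neither does the paper: it appears there only as Conjecture~\ref{conj:bipartiteCirculantExpected} and is explicitly left open, with just the special cases of Proposition~\ref{prop:circulantIntegers} (where $k \mid n$ and $n \mid k^2$) and Proposition~\ref{prop:noDiagonalGeneric} ($l = n-1$) actually established. Your framework is sound as far as it goes: the dimension-count lower bound via Proposition~\ref{prop:dimcount}, the monotonicity of $\gcr$ under adding edges, and the reduction to Lemma~\ref{lem:gcrExpected} applied to a supergraph $G^+$ whose $k^2$ non-edges form a transversal of the cells $P_i\times Q_j$ are all correct, and this is essentially the ``first step'' the paper itself proposes (relaxing the divisibility hypotheses of Proposition~\ref{prop:circulantIntegers}). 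But the entire burden then rests on your Combinatorial Claim, which you admit you cannot prove; that is a genuine gap, not a routine verification.

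Worse, the Combinatorial Claim is false in general, so your main route cannot be completed by any cleverer covering design. Take $n=5$, $l=3$, so $d=2$, $nd=10$, $k=\lfloor\sqrt{10}\rfloor=3$, $r=2$. Any partition of $[5]$ into $3$ nonempty row blocks has a singleton block $P_i=\{a\}$, and the only band non-edges in row $a$ are $(a,a)$ and $(a,a+1)$, which meet at most $2$ of the $3$ column blocks; hence some cell $P_i\times Q_j$ contains no band non-edge. More generally, a row block of size $s$ meets at most $sd$ column blocks, so your claim forces every row block to have size at least $\lceil k/d\rceil$ and therefore $n \ge k\lceil k/d\rceil$; this fails for $(5,3)$ (since $5<6$) and for infinitely many other pairs, e.g.\ whenever $nd$ is a perfect square but $d \nmid \sqrt{nd}$, in which case $k\lceil k/d\rceil > k^2/d = n$. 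So for such $(n,l)$ no supergraph of $G(n,l)$ can satisfy the partition hypotheses of Lemma~\ref{lem:gcrExpected} with non-edges drawn from the band, and the obstruction is pigeonhole, not lack of ingenuity. Your fallback via Remark~\ref{rem:differentialGCR} --- finding $k$-dimensional spaces $U,W$ so that the only matrix with column space in $U$, row space in $W$, and support contained in $E$ is zero --- is the correct dual formulation and, crucially, is not tied to $0/1$ partition vectors, so it is not blocked by the counterexample above; but you give no construction there, so it remains a sketch. The conjecture stays open after your attempt; any completion would have to abandon partition-type kernels and cokernels (e.g.\ carry out your Fourier/character idea) or use a different method entirely.
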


Computations provide some evidence for Conjecture \ref{conj:bipartiteCirculantExpected}.
A first step towards proving may involve relaxing the assumption that $k$ divides $n$
in Proposition \ref{prop:circulantIntegers}.

In Section \ref{sec:symmetric} below,
we identify a family of semisimple graphs which exhibit arbitrarily
many typical ranks in the symmetric matrix situation.
We would like to find something similar in the non-symmetric situation.
As of now, it seems that the most promising family of bipartite graphs
for this goal are those of the form $G(n,n-1)$.
Such graphs are sometimes called ``crown graphs''
and, up to row-swapping, their corresponding partial matrices have all entries known
aside from the diagonals.
In Proposition \ref{prop:noDiagonalGeneric} below we compute their generic completion ranks.

\begin{prop}\label{prop:noDiagonalGeneric}
    The generic completion rank of $G(n,n-1)$ is $n - \left\lfloor\sqrt{n} \right\rfloor$.
\end{prop}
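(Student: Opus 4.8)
Write $j := \lfloor\sqrt{n}\rfloor$ and $r := n-j$, so that the claim is $\gcr(G(n,n-1)) = r$. The graph $G(n,n-1)$ is the crown graph: its partial matrices are the $n\times n$ matrices with every off-diagonal entry specified and the diagonal unspecified, so $|E| = n^2-n$. Since $\dim \M^{n\times n}_{n-i} = (n-i)(2n-(n-i)) = n^2 - i^2$, the smallest $k$ with $\dim\M^{n\times n}_k \ge |E|$ is $k = n-j$ (because $i^2 \le n$ exactly for $i \le j$). Thus Proposition~\ref{prop:dimcount}(1) already gives the lower bound $\gcr(G(n,n-1)) \ge r$, and the entire content of the proposition is the matching upper bound. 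When $n$ is a perfect square this is Proposition~\ref{prop:circulantIntegers} with $k = \sqrt{n}$, so I would dispatch that case first and then concentrate on general $n$, where $|E| = n^2 - n < n^2 - j^2 = \dim\M^{n\times n}_r$ and Lemma~\ref{lem:gcrExpected} no longer applies verbatim.

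For the upper bound I would use the differential criterion of Remark~\ref{rem:differentialGCR} and Algorithm~\ref{alg:gcr}: it suffices to exhibit a single rank-$r$ matrix $M$ at which $\pi_G$ restricted to the tangent space $T_M\M^{n\times n}_r$ is surjective onto $\C^E$, since surjectivity of the differential is a Zariski-open condition on the smooth locus of $\M^{n\times n}_r$. Fix functionals $c_1,\dots,c_j \in (\C^n)^*$ cutting out the column span and vectors $v_1,\dots,v_j$ spanning the kernel of $M$; then $T_M\M^{n\times n}_r = \{A : c_i A v_l = 0 \text{ for all } i,l\}$, of dimension $n^2-j^2$. The kernel of $\pi_G\vert_{T_M}$ is exactly the set of diagonal matrices $D = \mathrm{diag}(d_1,\dots,d_n)$ lying in $T_M$, and for such $D$ the condition $c_i D v_l = 0$ reads $\sum_k (c_i)_k (v_l)_k\, d_k = 0$. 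Hence $\pi_G\vert_{T_M}$ is surjective if and only if these $j^2$ linear equations in $(d_1,\dots,d_n)$ are independent, i.e.\ if and only if the $j^2$ vectors $c_i * v_l \in \C^n$ (with $k$-th coordinate $(c_i)_k (v_l)_k$) are linearly independent; the dimension bookkeeping then forces $\dim\ker = n - j^2$ and rank $|E|$.

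The key step is therefore to choose $c_i,v_l$ so that the entrywise products $c_i * v_l$ are linearly independent, which is possible precisely because $j^2 \le n$. I would take Vandermonde data: pick scalars $\alpha_1,\dots,\alpha_j$ and $\beta_1,\dots,\beta_j$ and set $(c_i)_k = \alpha_i^{\,k-1}$, $(v_l)_k = \beta_l^{\,k-1}$. Then $c_i * v_l$ is the Vandermonde vector with node $\alpha_i\beta_l$, and for generic choices the $j^2$ products $\alpha_i\beta_l$ are pairwise distinct; the corresponding $j^2 \times n$ Vandermonde matrix then has full row rank $j^2$ because $j^2 \le n$. Each of $\{c_i\}$ and $\{v_l\}$ is itself linearly independent, so Lemma~\ref{lem:imageAndKernel} produces a matrix $M$ of rank exactly $r$ with the prescribed column span and kernel, a smooth point of $\M^{n\times n}_r$. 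This $M$ witnesses surjectivity of the differential, giving $\gcr(G(n,n-1)) \le r$ and hence equality.

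The main obstacle is this linear-independence step: for non-square $n$ the clean partition construction of Lemma~\ref{lem:gcrExpected} is unavailable, and one must instead control the rank of the matrix of entrywise products $c_i * v_l$. The Vandermonde choice reduces this to the classical full rank of a rectangular Vandermonde matrix with distinct nodes, and the inequality $j^2 \le n$ (together with $n < (j+1)^2$, which is what blocks any smaller rank and so matches the lower bound) is exactly what makes the argument go through; the only routine care needed is verifying that generic $\alpha_i,\beta_l$ make the products $\alpha_i\beta_l$ distinct.
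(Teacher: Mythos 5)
Your proof is correct, but it takes a genuinely different route from the paper's for the non-square case. The paper argues by reduction: writing $n = k^2 + a$ with $0 < a < 2k+1$, it passes to the graph $H_n$ obtained by specifying $a$ of the diagonal entries (so only $k^2$ remain missing), uses monotonicity $\gcr(G(n,n-1)) \le \gcr(H_n)$ under adding edges, and computes $\gcr(H_n) = k^2-k+a$ by applying Lemma~\ref{lem:projFullDim} (adding a fully specified row and column raises the rank by one while preserving generic injectivity of the differential) $a$ times, starting from the perfect-square crown graph $G(k^2,k^2-1)$ whose rank comes from Proposition~\ref{prop:circulantIntegers}. You instead work with $G(n,n-1)$ directly and certify surjectivity of the differential at an explicit witness point: your reduction of surjectivity to linear independence of the $j^2$ entrywise products $c_i * v_l$, followed by the Vandermonde choice that turns these products into Vandermonde vectors with distinct nodes $\alpha_i\beta_l$, is in effect an extension of the method in the proof of Lemma~\ref{lem:gcrExpected} to the case where the dimension count is not tight (there the coefficient matrix is square and the partition construction makes it a permutation matrix; here it is $j^2\times n$ and you need full row rank, which is exactly what distinct-node Vandermonde rows give). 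The paper's route buys economy, since it reuses two results needed anyway and Lemma~\ref{lem:projFullDim} is recycled in the symmetric setting; your route buys uniformity and self-containedness, since it covers all $n$ at once (the perfect-square case need not be split off), avoids both Lemma~\ref{lem:projFullDim} and the monotonicity step, and produces a concrete certificate matrix $M$ via Lemma~\ref{lem:imageAndKernel}.
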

\begin{proof}
	When $n$ is a perfect square the proposition follows
	from Proposition \ref{prop:circulantIntegers} with $k = \sqrt{n}$.
    So let $k$ be a positive integer and
    assume $n = k^2 + a$ for some $0 < a < 2k + 1$.
    A dimension count shows that the generic completion rank of $G(n,n-1)$ is at least 
    $n - \left\lfloor\sqrt{n} \right\rfloor$.
    Let $H_n$ denote the bipartite graph corresponding to the $n\times n$
    partial matrix that is missing $k^2$ diagonal entries.
    Note that one can obtain $H_n$ from $G(n,n-1)$ by adding $a$ edges.
    Then $\gcr(G(n,n-1)) \le \gcr(H_n)$ and
    Lemma \ref{lem:projFullDim} implies that $\gcr(H_n) = k^2-k+a$.
    Then note that $n - \left\lfloor\sqrt{n} \right\rfloor = k^2-k+a$.
\end{proof}



\section{Symmetric matrix completion}\label{sec:symmetric}

In this section, we discuss some aspects of the matrix completion problems for symmetric partially filled matrices.
Instead of using bipartite graphs as in the non-symmetric case, a
pattern of known entries will be encoded by a semisimple graph $G$
(that is, loops are allowed but no multiple edges), where we put in an edge between $i$ and $j$ if the entries $(i,j)$ and $(j,i)$ are known.
After preliminaries,
we give some operations for constructing graphs on $n$ vertices
that have $n$ as a typical rank.
We then construct a family of graphs exhibiting
an unbounded number of typical ranks.

We now introduce notation and terminology.
Let $G = (V,E)$ be a semisimple graph on vertex set $V$,
which we usually take to be $[n]$.
Let $\sym^n(K)$ denote the set of $n\times n$ symmetric matrices with entries in a field $K$
whose rows and columns are indexed by $[n]$.
Denote the variety consisting of matrices of rank at most $r$ by $\sym_r^n(K)$.
As in the non-symmetric case, we write $\pi_G$ for the coordinate projection from $\sym^n_r(K)$ to $K^E$
taking a matrix $(a_{ij})$ to the vector $(a_{ij} : (i,j)\in E)$.
In analogy to the non-symmetric case, we define the \emph{($K$-)completion rank} of a $G$-partial matrix $X$
to be the minimum among all ranks of symmetric $K$-completions of $X$.
Many of the preliminary results for non-symmetric low-rank matrix completion problems have analogues
in the symmetric case.
In particular, there exists a unique integer, which we call the \emph{generic completion rank of $G$} and denote by $\sgcr(G)$,
that is the completion rank of almost all $G$-partial $\C$-matrices.
This was noted in \cite{uhler2012geometry}
for its utility in bounding the number of observations required to ensure existence of maximum likelihood estimators
for Gaussian graphical models.
We summarize some other preliminary results for symmetric matrix completion in the proposition below.

\begin{prop}\label{prop:symBasicResults}
	Let $G = (V,E)$ be a semisimple graph with $|V| = n$.
	\begin{enumerate}
	    \item The generic completion rank of $G$ is at least the smallest $k$ such that $nk-\binom{k}{2} \ge \#E$.
	    \item The smallest (real) typical rank of $G$ is $\sgcr(G)$.
	    \item If $r_1<r_2$ are typical ranks of $G$, then so is every $r$ satisfying $r_1 \le r \le r_2$.
	    \item The maximal real or complex completion rank of $G$ is at most twice $\sgcr(G)$.
	\end{enumerate}
\end{prop}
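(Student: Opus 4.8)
The plan is to mirror the four non-symmetric preliminary results, since each part has a direct symmetric analogue proved earlier in the excerpt; the only real work is checking that the geometric arguments survive when we restrict to the space $\sym^n(K)$ of symmetric matrices and the varieties $\sym_r^n(K)$ in place of $\M^{m\times n}_r(K)$. For part (1), I would run the same dimension count as in Proposition~\ref{prop:dimcount}(a): the variety $\sym_k^n$ of symmetric matrices of rank at most $k$ has dimension $nk - \binom{k}{2}$ (the standard count for symmetric matrices, e.g.\ via the parametrization by rank-$k$ quadratic forms), so in order for $\pi_G(\sym_k^n)$ to be Zariski-dense in $K^E$ we need $\#E \le nk - \binom{k}{2}$, giving the stated lower bound on $\sgcr(G)$.

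For part (2) I would invoke the Tarski--Seidenberg transfer exactly as in Proposition~\ref{prop:typicalRanksInterval}(a): the dimension of the real semialgebraic set $\pi_G(\sym_r^n(\R))$ equals the dimension of its Zariski closure in $\C^E$ (by \cite[Proposition~2.8.2]{BCRMR1659509}), so the smallest typical rank over $\R$ coincides with the smallest $r$ for which the complex image is dense, which is $\sgcr(G)$. For part (3), the interval property, I would reproduce the inductive argument of Proposition~\ref{prop:typicalRanksInterval}(b). The key input there is that a generic symmetric matrix of rank $r+1$ can be written as a sum of a generic symmetric matrix of rank $r$ and a generic symmetric matrix of rank $1$; this decomposition is still available in the symmetric setting because a generic symmetric rank-one matrix has the form $vv^t$ and symmetric matrices of a given rank decompose as sums of such rank-one symmetric pieces, so the same induction on rank shows that if $r+1$ fails to be typical then no $r+m$ is typical, contradicting typicality of $r_2$.

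For part (4) I would copy the averaging/line argument of Proposition~\ref{prop:maxTwiceGeneric}. Given an arbitrary $G$-partial symmetric matrix $M$, choose an interior point $M'$ of the set of completion rank $\sgcr(G)$ (Zariski-open in the complex case, Euclidean-open in the real case), take the line $L$ spanned by $M$ and $M'$, find two points $M_1,M_2$ on $L$ of completion rank $r=\sgcr(G)$ spanning $L$, fix symmetric completions $A_1,A_2$ of rank $r$, and observe that the appropriate linear combination of $A_1$ and $A_2$ is a symmetric completion of $M$ of rank at most $2r$. The point I would emphasize is that all operations here—scaling, taking linear combinations, and $\pi_G$—preserve symmetry, so the completions we build remain symmetric throughout.

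The arguments are all transfers, so there is no single hard obstacle; the step requiring the most care is part (3), where one must be sure the additive decomposition $A = A_1 + A_2$ with $\rk(A_1)=r+m-1$, $\rk(A_2)=1$ can be taken within the \emph{symmetric} matrices while keeping both summands generic of the stated ranks. I would verify this by the spectral/diagonalization description of real (or complex) symmetric matrices: a generic symmetric matrix of rank $r+m$ has $r+m$ nonzero eigenvalues (or, over $\C$, a rank-$(r+m)$ Gram decomposition), and peeling off one rank-one symmetric summand $vv^t$ leaves a generic symmetric matrix of rank $r+m-1$, which is exactly what the induction needs.
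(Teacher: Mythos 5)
Your proposal is correct and takes essentially the same approach as the paper: the paper itself states that the proof is "essentially the same as the analogous results for the non-symmetric case" (Propositions \ref{prop:dimcount}, \ref{prop:typicalRanksInterval}, and \ref{prop:maxTwiceGeneric}), with the dimension formula $\dim \sym_r^n = nr - \binom{r}{2}$ of Lemma \ref{lem:symmetricMatrixRankVarieties}(b) as the only new ingredient, exactly as in your part (1). Your explicit check that the rank-one peeling in part (3) can be carried out within symmetric matrices via the spectral decomposition is a detail the paper leaves implicit, not a different route.
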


The proof of Proposition \ref{prop:symBasicResults} is essentially the same as the analogous results
for the non-symmetric case.
The only missing piece is the dimension of $\sym_r^n$,
which we give in Lemma~\ref{lem:symmetricMatrixRankVarieties} below,
along with some other basic facts about about $\sym_r^n$ that we will use later.

\begin{lem}\label{lem:symmetricMatrixRankVarieties}
\begin{enumerate}[(a)]
\item Let $L \subseteq \kk^{n}$ be a linear subspace of dimension $d$.
	Then there exists an $n\times n$ symmetric matrix of rank $n-d$ whose kernel is $L$.
\item The dimension of $\sym_r^n$ is $nr-\binom{r}{2}$.
\item Let $M$ be a generic $n\times n$ symmetric matrix of rank $r$.
	Then the tangent space to $\sym_r^n$ at $M$ is the set of $n\times n$
	symmetric matrices $B$ such that $x^TBx = 0$ for all $x \in \ker M$.
\end{enumerate}
\end{lem}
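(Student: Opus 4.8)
The plan is to prove the three parts of Lemma \ref{lem:symmetricMatrixRankVarieties} in order, since each feeds into the next. For part (a), I would mimic the construction in Lemma \ref{lem:imageAndKernel}, but with the symmetry constraint forcing the image to be determined by the kernel. Given $L$ of dimension $d$, pick a basis $f_1,\dots,f_d$ of $L$ and let $F$ be the $n\times d$ matrix with these columns. I want a symmetric matrix $M$ with $MF = 0$ and $\rank(M) = n-d$; equivalently, the column span of $M$ should be $L^\perp$ (with respect to the standard bilinear form), since for a symmetric matrix the image is the orthogonal complement of the kernel. Concretely, choose any basis $g_1,\dots,g_{n-d}$ of $L^\perp$ and set $M = \sum_k g_k g_k^T$ scaled appropriately, or more cleanly take a symmetric matrix representing a nondegenerate form on $L^\perp$ extended by zero on $L$. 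I would verify that such an $M$ has kernel exactly $L$ and rank exactly $n-d$; the only subtlety is ensuring the restricted form on $L^\perp$ is nondegenerate, which over an infinite field is achieved generically.

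For part (b), I would compute $\dim \sym_r^n$ by parametrizing rank-$r$ symmetric matrices. A generic such matrix can be written as $M = A^T A$ up to the appropriate form, or via the parametrization $M = \sum_{i=1}^r \lambda_i v_i v_i^T$; cleaner is to note that the rank-$r$ symmetric matrices form the image of the map sending an $r\times n$ matrix $A$ (of rank $r$) together with a symmetric $r\times r$ form to $A^T S A$. I would instead use the standard fact that $\sym_r^n$ is the $r$-th secant-type variety and compute its dimension via the tangent space at a smooth point, which connects directly to part (c). The dimension of the space of symmetric matrices vanishing on $\ker M$ (from part (c)) is $\binom{n+1}{2} - \binom{n-r+1}{2} = nr - \binom{r}{2}$, a direct count, and this equals the tangent space dimension at a smooth point, hence the variety dimension.

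For part (c), the key observation is that the tangent space to $\sym_r^n$ at a smooth point $M$ is the image of the differential of the parametrization, or equivalently can be characterized by the vanishing conditions coming from the rank stratification. I would argue that $B \in T_M \sym_r^n$ if and only if $x^T B x = 0$ for all $x \in \ker M$, paralleling the non-symmetric statement in Algorithm \ref{alg:gcr} and \cite[Example~14.16]{HarMR0463157} where the condition is $c_i B v_j = 0$; in the symmetric case the left and right kernels coincide, so the two-sided condition collapses to the single quadratic condition $x^T B x = 0$ on $\ker M$. I would check that the space of symmetric $B$ satisfying $x^T B x = 0$ for all $x \in \ker M$ has exactly the dimension $nr - \binom{r}{2}$ computed above, confirming it is the full tangent space rather than a proper subspace.

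The main obstacle I anticipate is part (c), specifically verifying that the naive symmetric analogue of the tangent-space condition is correct and not merely a necessary condition. In the non-symmetric case one uses independent left and right kernels; here one must be careful that imposing $x^T B x = 0$ for $x \in \ker M$ (rather than the a priori stronger $x^T B y = 0$ for all $x,y \in \ker M$) already cuts out the tangent space. The resolution is a dimension count: the symmetric bilinear conditions $x^T B y + y^T B x = 0$ for $x,y$ ranging over a basis of $\ker M$ are exactly $\binom{d+1}{2}$ independent linear conditions on $B$ where $d = n-r$, and $\binom{n+1}{2} - \binom{n-r+1}{2} = nr - \binom{r}{2}$ matches the parametrized dimension, so these conditions are necessary, independent, and dimensionally tight, hence they define precisely the tangent space. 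I would present this dimension-matching as the crux that pins down part (c) and simultaneously yields part (b).
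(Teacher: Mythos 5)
Your plan for parts (b) and (c) is circular as written. You prove (b) by asserting that the tangent space at a smooth point is the solution space of the conditions $x^TBx=0$ for $x\in\ker M$ (i.e., by invoking (c)), and you prove (c) by observing that this solution space, of dimension $nr-\binom{r}{2}$, ``matches the parametrized dimension'' --- but that parametrized dimension is exactly the content of (b), which you explicitly declined to compute independently (``I would \emph{instead} use\dots''). Necessity and independence of the $\binom{d+1}{2}$ conditions only give the containment $T_M\sym_r^n \subseteq W$ with $\dim W = nr-\binom{r}{2}$; to upgrade this to equality you need the lower bound $\dim T_M\sym_r^n = \dim\sym_r^n \ge nr-\binom{r}{2}$, and that must come from an argument not routed through (c). The paper supplies it by proving (b) first with a direct count: a generic rank-$r$ symmetric matrix is freely determined by its symmetric upper-left $r\times r$ block and its lower-left $(n-r)\times r$ block, the remaining entries being forced by vanishing of bordered $(r+1)\times(r+1)$ minors, giving $\binom{r+1}{2}+(n-r)r = nr-\binom{r}{2}$; only then is (c) deduced by the dimension-matching you describe. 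Your proposal contains the raw material to close the circle --- for instance the map $(A,S)\mapsto A^TSA$ has domain of dimension $nr+\binom{r+1}{2}$ and generic fibers that are free $\gl_r$-orbits of dimension $r^2$ --- but one such computation must actually be carried out rather than deferred to (c).

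There is also a flaw in your verification plan for (a), which matters because the lemma is applied over $\kk=\C$. Over $\C$ the subspace $L$ may be isotropic, in which case $L\cap L^\perp\neq 0$: for $L=\lspan\{(1,i)\}\subset\C^2$ one has $L^\perp=L$, so $\C^2\neq L\oplus L^\perp$ and your ``cleaner'' construction (a nondegenerate form on $L^\perp$ extended by zero on $L$) does not make sense; moreover any symmetric $M$ with $\ker M=L$ necessarily restricts to a \emph{degenerate} form on $L^\perp$, so the nondegeneracy you propose to ``achieve generically'' is unattainable --- it is a property of $L$, not of any choices you make. Fortunately your first construction is correct without it: with $G$ the $n\times(n-d)$ matrix whose columns are a basis of $L^\perp$, the matrix $M=GG^T$ (no scaling needed) satisfies $\ker M=L$, because $G$ has full column rank, so $GG^Tv=0$ forces $G^Tv=0$, i.e., $v\in(L^\perp)^\perp=L$; this argument is insensitive to isotropy and is precisely the paper's $A^TA$ construction in transposed form. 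So the candidate matrices in your proposal are the right ones, but both justifications --- the nondegeneracy check in (a) and the dimension match in (c) --- need to be replaced by the arguments above.
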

\begin{proof}
    Let $A \in \kk^{(n-d)\times n}$ be a matrix
    whose rows are a basis of the space of linear functionals vanishing on $L$.
    Then $A^TA$ is an $n \times n$ symmetric matrix with kernel $L$ thus proving (a).

    Part (b) is well known, but a precise reference is difficult to locate.
    Hence we include a proof here.
    If $A$ is a generic $n\times n$ symmetric matrix of rank $r$,
    then the upper-left $r\times r$ submatrix, which we will denote $A'$, will be a generic symmetric matrix.
    The lower-left $(n-r)\times r$ submatrix will also be generic.
    This gives us a total of $rn-\binom{r}{2}$ independent parameters.
    These entries uniquely determine the rest.
    Specifically, the upper-right $r \times (n-r)$ submatrix is the transpose of the lower-left $(n-r) \times r$ submatrix.
    Each remaining entry $a_{ij}$ is uniquely determined by the equation
    \[
    	\det \begin{pmatrix}
    	    A' & b_j^T \\
    	    b_i & a_{ij}
    	\end{pmatrix}  = 0
    \]
    where $b_i$ denotes the $i$-th row of the lower-left $(n-r)\times r$ submatrix of $A$.

    For part (c), we differentiate the usual parameterization of the variety of $n\times n$
    symmetric complex matrices of rank $r$ given by the factorization $M=UU^T$, which shows that the tangent space to $\sym_r^n$ at $M$
    is the set of all matrices of the form $UA^T + AU^T$.
    We have $x^T(UA^T + AU^T)x = 0$ for all $x \in \ker(U^T) = \ker(M)$.
    By part (b),
    it suffices to prove that the dimension of the set of symmetric matrices
    $B$ such that $x^TBx = 0$ for all $x \in \ker(M)$ is $nr - \binom{r}{2}$.
    By diagonalizing $M$ and possibly permuting rows and columns,
    we can assume that $M$ is diagonal with the $r$ non-zero diagonal entries at the top left.
    Then the set of all symmetric $B$ with $x^TBx = 0$ for all $x \in \ker(M)$ is
    the set of symmetric matrices whose lower $(n-r)\times(n-r)$ block is all zeros.
    This set has dimension $nr - \binom{r}{2}$.
\end{proof}

\subsection{Full-rank typical graphs}
We say that a semisimple graph $G = (V,E)$ is \emph{full-rank typical}
if $|V|$ is a typical rank of $G$.
This subsection explores how to construct examples of full-rank typical graphs with elementary arguments.
In particular, we show that the disjoint union of a complete semi-simple graph
and an isolated loop is full-rank typical.
Moreover, given a full-rank typical graph $G$,
we show that one can construct a new full-rank typical graph by adding edges,
adding a suspension vertex with a loop,
or taking the join with another full-rank typical graph.

Since the determinant of a matrix is a continuous function of its entries,
one can prove that a given graph $G$ is full-rank typical by exhibiting
a single real $G$-partial matrix $X$ such that every real completion of $X$
has nonzero determinant.
This will be our primary tool for showing that a given graph is full-rank typical.

\begin{prop}\label{prop:allLoops}
    Assume $G = (V,E)$ is full-rank typical.
    Then $E$ must contain all loops.
\end{prop}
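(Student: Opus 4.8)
The plan is to argue by contradiction. Suppose $G$ is full-rank typical but the loop at some vertex $i$ is missing, so the diagonal entry $a_{ii}$ is unspecified. By the criterion recalled just before the proposition, full-rank typicality provides a nonempty Euclidean-open set $U$ of real $G$-partial matrices, each of whose symmetric completions has full rank $n$, equivalently nonvanishing determinant. Since relabeling vertices is harmless, it suffices to rule out a single missing loop.

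The driving observation is that $a_{ii}$ occurs in only one position of the matrix, so for any completion $A$ the determinant is affine-linear in $a_{ii}$: if $A'$ denotes the principal submatrix of $A$ obtained by deleting row and column $i$, then cofactor expansion along row $i$ gives $\det A = a_{ii}\det A' + D$, where $D$ collects the remaining terms and is independent of $a_{ii}$ (every minor appearing in those terms deletes row $i$ and hence does not see $a_{ii}$). First I would fix an arbitrary $X \in U$ and an arbitrary choice of the unspecified entries of $A'$, and then let $a_{ii}$ range over $\mathbb{R}$. All the resulting matrices are completions of $X$ and hence nonsingular; but an affine-linear function of $a_{ii}$ that never vanishes must have vanishing leading coefficient, so $\det A' = 0$. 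Because the unspecified entries of $A'$ were arbitrary, this shows that $\det A'$ vanishes identically as a polynomial in the free entries of the submatrix, with the specified entries held at their values in $X$.

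Next I would contradict this using genericity. The determinant of an $(n-1)\times(n-1)$ symmetric matrix is a nonzero polynomial in its entries, so the set $W$ of $G$-partial matrices $X$ for which $\det A'$ is \emph{not} identically zero in the free submatrix entries is the complement of a proper algebraic subset of $\mathbb{R}^E$, hence dense and open. As $U$ is nonempty and open while $W$ is dense, the intersection $U \cap W$ is nonempty; any $X$ in it would satisfy both $\det A' \equiv 0$ and $\det A' \not\equiv 0$, a contradiction. Therefore the loop at $i$ belongs to $E$, and as $i$ was arbitrary, $E$ contains all loops.

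I expect the affine-linearity step to be entirely routine and the genericity step to be the point requiring care: one must check that ``$\det A'$ is not identically zero in the free submatrix entries'' is a dense open condition on $X$, which rests on the determinant of a symmetric matrix being a nonzero polynomial in \emph{all} of its entries, so that its coefficients, viewed as polynomials in the specified entries, cannot all vanish identically.
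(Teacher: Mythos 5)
Your proof is correct and uses essentially the same idea as the paper: the determinant of any completion is affine-linear in the missing diagonal entry with leading coefficient the complementary principal minor, and genericity guarantees this minor can be made nonzero, so a real root (hence a rank-deficient completion) exists for an open dense set of partial matrices. The paper phrases this directly (a generic partial matrix, generically extended, has a degree-one determinant with a real zero), while you phrase it as a contradiction between the open set $U$ and the dense open set $W$; the mathematical content is identical.
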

\begin{proof}
    Assume $G$ does not contain the loop at some vertex $v$, i.e.~the $G$-partial matrix is not specified at the diagonal entry corresponding to $v$.
    Let $G'$ be the graph obtained by adding all missing edges to $G$
    except for the loop at $v$.
    Let $X$ be a generic $G$-partial matrix
    and let $X'$ be a generic $G'$-partial matrix that agrees with $X$
    at all common entries.
    The determinant of $X'$ is a polynomial of degree $1$ in the single missing entry.
    This polynomial has a real zero corresponding to a completion of $X'$,
    and therefore of $X$, with rank at most $|V|-1$.
\end{proof}

Now we give three operations on graphs that preserve the property of being full-rank typical.
We begin with the simplest such operation - adding edges.

\begin{prop}\label{prop:semisimpleAddEdges}
	If $G = (V,E)$ is full-rank typical,
	then so is any graph $H = (V,E\cup F)$
	on the same set of vertices that contains $G$ as a subgraph.
\end{prop}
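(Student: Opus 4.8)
The plan is to reduce everything to the elementary monotonicity of completion rank under enlarging the pattern of specified entries, combined with the fact that full-rank typicality of $G$ is, by definition, a full-dimensional (nonempty Euclidean interior) condition. Set $n := |V|$, so that completions are $n\times n$ symmetric matrices and $n$ is the maximal possible rank. Let $\rho\colon \R^{E\cup F}\to\R^E$ be the coordinate projection that forgets the entries indexed by $F$, so that $\rho(Y)$ is the $G$-partial matrix obtained by restricting the $H$-partial matrix $Y$ to the edges of $G$.

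First I would record the key observation. Since $E\subseteq E\cup F$, any symmetric completion of an $H$-partial matrix $Y$ is in particular a completion of $\rho(Y)$: it agrees with the prescribed entries on all of $E\cup F$, hence on $E$. Thus the set of completions of $Y$ is a subset of the set of completions of $\rho(Y)$, and taking the minimum rank over these nested sets gives that the completion rank of $Y$ is at least the completion rank of $\rho(Y)$. Because $n$ is the largest rank an $n\times n$ matrix can have, it follows that whenever $\rho(Y)$ has completion rank $n$, the matrix $Y$ must also have completion rank exactly $n$.

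Next I would invoke full-rank typicality of $G$. By definition $n$ is a typical rank of $G$, so the set $U\subseteq\R^E$ of $G$-partial matrices of completion rank $n$ has nonempty interior; shrinking it, take $U$ to be open and nonempty. The projection $\rho$ is continuous and surjective, so $\rho^{-1}(U)$ is a nonempty open subset of $\R^{E\cup F}$. By the monotonicity step, every $Y\in\rho^{-1}(U)$ has completion rank $n$. Hence the set of $H$-partial matrices of completion rank $n$ contains a nonempty open set, which is precisely the statement that $n=|V|$ is a typical rank of $H$, i.e.\ that $H$ is full-rank typical.

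I do not anticipate a genuine obstacle: the whole content is the containment of completion sets under adding specified entries, together with the maximality of the rank $n$ forcing the resulting rank inequality to be an equality. The only point to state carefully is that this argument is exactly the robust form of the determinant-witness criterion recalled before the statement — rather than merely producing some $H$-partial matrix all of whose completions are nonsingular, one lifts an \emph{interior} full-rank witness $X\in U$ of $G$ (via $X=\rho(Y)$), which guarantees an entire open neighborhood $\rho^{-1}(U)$ of $H$-partial matrices whose completions are all nonsingular, and hence the required open set of full-rank completions.
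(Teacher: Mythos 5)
Your proof is correct and takes essentially the same approach as the paper, whose entire proof is the one-line observation that the completion rank of a partial matrix can only go up when more entries are specified. Your write-up simply fills in the details that the paper leaves implicit: the monotonicity of completion rank under restriction to $E$, the maximality of rank $n$ forcing equality, and the openness and nonemptiness of $\rho^{-1}(U)$ witnessing typicality for $H$.
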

\begin{proof}
This follows from the fact that the completion rank of a partial matrix can only go up when we specify more entries.
\end{proof}

Given a graph $G = (V,E)$, a \emph{suspension vertex} is a vertex $v \in V$
that is connected to every other vertex.
That is, a vertex $v$ satisfying $\{v,w\} \in E$ for all $w \in V\setminus \{v\}$.
If $G$ has a loop at a suspension vertex $v$,
then we call $v$ a \emph{looped suspension vertex}.
As we see in the following proposition,
the operation of
adding a new looped suspension vertex preserves
the property of being full-rank typical.

\begin{prop}\label{prop:semisimpleSuspension}
    If $G$ is full-rank typical,
    so is the graph obtained from $G$ by adding a looped suspension vertex.
\end{prop}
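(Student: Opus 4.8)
The plan is to invoke the criterion established at the start of this subsection: a semisimple graph is full-rank typical as soon as one exhibits a single real partial matrix all of whose real symmetric completions have nonzero determinant. Writing $n = |V|$ and letting $G' = (V \cup \{v\}, E')$ denote $G$ with the new looped suspension vertex $v$ adjoined, the whole argument will come down to writing out one well-chosen $G'$-partial matrix of size $(n+1)\times(n+1)$ and computing a single determinant.

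First I would extract a witness for $G$ itself. By definition, full-rank typicality of $G$ means that $n$ is a typical rank, so the set of $G$-partial matrices of completion rank $n$ has nonempty interior and is in particular nonempty. A $G$-partial matrix has completion rank $n$ exactly when every real symmetric completion is nonsingular: an $n\times n$ matrix has rank at most $n$, so minimal completion rank $n$ forces every completion to be invertible. Fix such a witness $X$.

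Next I would assemble $M$ by decoupling the new vertex. Since $v$ is a suspension vertex, every entry of the new row and column is specified, and since $v$ is looped the new diagonal entry is specified as well; I am therefore free to set all new off-diagonal entries to $0$ and the new diagonal entry to $1$, giving (schematically)
\[
M = \begin{pmatrix} X & 0 \\ 0 & 1 \end{pmatrix}.
\]
Because $v$ is adjacent to every vertex, $G'$ has no non-edge meeting $v$, so the non-edges of $G'$ coincide with those of $G$. Hence every real symmetric completion of $M$ is block diagonal, of the form $A = \begin{pmatrix} \tilde X & 0 \\ 0 & 1 \end{pmatrix}$ with $\tilde X$ a completion of $X$, and $\det A = \det \tilde X$. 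As $X$ is a witness, $\det \tilde X \neq 0$ for every $\tilde X$, so $\det A \neq 0$ for every completion $A$ of $M$, and the criterion then yields that $G'$ is full-rank typical.

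The only step requiring thought is the construction rather than any computation, and the point to flag is that both features of a looped suspension vertex are genuinely used. Full adjacency makes the new off-diagonal entries ours to choose, which is what lets us split off $v$ and remove any interaction between $v$ and the completions of $X$; the loop makes the new diagonal entry specified, so that we may pin it to a nonzero value. Equivalently, for a general choice of new entries the Schur complement gives $\det A = \det(\tilde X)\,(c - b^T \tilde X^{-1} b)$, and taking $b = 0$, $c = 1$ is precisely what avoids having to control the rational expression $b^T \tilde X^{-1} b$ as $\tilde X$ ranges over all completions of $X$. I do not anticipate a genuine obstacle beyond identifying this construction.
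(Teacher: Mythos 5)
Your proof is correct, but it runs along a different track than the paper's, which is a one-line genericity argument: adjoining the looped suspension vertex grows $G$-partial matrices by a completely specified row and column, and the completion rank of a generic $G$-partial matrix then increases by exactly $1$. Implicit in that ``increases by $1$'' is the claim that no completion of the enlarged matrix can remain at rank $n$, i.e.\ that the Schur complement $c - b^T\tilde{X}^{-1}b$ avoids zero for generic specified data $(b,c)$ as $\tilde{X}$ ranges over all (nonsingular) completions of the inner block; the paper leaves this to genericity. Your decoupling choice $b=0$, $c=1$ eliminates that issue entirely: every completion is block diagonal, so $\det A = \det\tilde{X} \neq 0$ holds exactly, with no genericity of the new entries needed. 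What you give up is that a single witness proves nothing on its own; you must then invoke the single-witness criterion stated at the start of the subsection, and that is where the subtlety relocates --- the set of completions is unbounded, so continuity of the determinant alone does not immediately make ``all completions nonsingular'' an open condition on the specified entries. Since that criterion is the paper's declared primary tool (it is exactly how Proposition~\ref{prop:completeLoopTypical} is proved), invoking it is legitimate here; the net effect is that you and the paper park the genericity/openness burden in different places rather than removing it, and your version has the advantage of making the rank-increase step an exact computation rather than a generic assertion.
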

\begin{proof}
In terms of matrices, this means that we grow $G$-partial matrices by a new row and column, which are completely specified. So the completion rank of a generic $G$-partial matrix increases by $1$.
\end{proof}

Our last graph operation that preserves the property of being full-rank typical is binary.
Let $G_1 = (V_1,E_1)$ and $G_2 = (V_2,E_2)$ be graphs
on disjoint vertex sets.
Define their \emph{join} as $G_1 + G_2 = (V_1 \cup V_2, E_1 \cup E_2 \cup E) $,
where $E$ is the edge set of the complete bipartite graph on parts $V_1$ and $V_2$. So a $G_1+G_2$-partial matrix is a block sum of $G_1$- and $G_2$-partial matrices that are also completely specified in the top right and bottom left blocks.

\begin{prop}\label{prop:semisimpleJoin}
    If $G_1 = (V_1,E_1)$ and $G_2 = (V_2,E_2)$ are full-rank typical,
    then so is $G_1 + G_2$,
    the join of $G_1$ and $G_2$.
\end{prop}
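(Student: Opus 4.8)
The plan is to use the determinant criterion recorded just before Proposition~\ref{prop:allLoops}: to prove that $G_1 + G_2$ is full-rank typical it suffices to exhibit a single real $(G_1+G_2)$-partial matrix all of whose real completions are nonsingular, since by continuity of the determinant the set of such partial matrices is open. First I would unwind the hypotheses. Because $G_i$ is full-rank typical, the set of $G_i$-partial matrices of completion rank $|V_i|$ has nonempty interior; a $G_i$-partial matrix has completion rank $|V_i|$ precisely when every real completion is nonsingular, so there exists a real $G_i$-partial matrix $X_i$ with the property that every real completion of $X_i$ has nonzero determinant.

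Next I would assemble the gluing. By definition of the join, a $(G_1+G_2)$-partial matrix has the block form
\[
X = \begin{pmatrix} X_1 & B \\ B^T & X_2 \end{pmatrix},
\]
where the diagonal blocks are a $G_1$- and a $G_2$-partial matrix and the off-diagonal block $B$ is \emph{completely} specified, since the join contains every edge between $V_1$ and $V_2$. I would exploit this freedom by setting $B = 0$. Then the only unspecified entries of $X$ are exactly the unspecified entries of $X_1$ and of $X_2$, so every real completion of $X$ has the block-diagonal form
\[
\begin{pmatrix} A_1 & 0 \\ 0 & A_2 \end{pmatrix},
\]
where $A_i$ is a real completion of $X_i$.

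Finally I would compute $\det\!\begin{pmatrix} A_1 & 0 \\ 0 & A_2 \end{pmatrix} = \det(A_1)\,\det(A_2)$, which is nonzero by the choice of $X_1$ and $X_2$. Hence every real completion of $X$ is nonsingular, its completion rank equals $|V_1| + |V_2| = |V_1 \cup V_2|$, and by the determinant criterion this forces $|V_1 \cup V_2|$ to be a typical rank, so $G_1 + G_2$ is full-rank typical. There is essentially no technical obstacle here: the entire proof turns on the single observation that the completely specified cross block may be set to zero, which decouples the two pieces and makes the determinant factor. The only point requiring care is confirming that $B$ is indeed completely specified so that choosing $B=0$ yields a legitimate $(G_1+G_2)$-partial matrix, and this is immediate from the definition of the join.
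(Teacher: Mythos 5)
Your proof is correct and is essentially identical to the paper's: both take $G_i$-partial matrices $X_i$ whose real completions are all nonsingular, set the fully specified cross block to zero, and conclude via the factorization $\det\begin{pmatrix} A_1 & 0 \\ 0 & A_2 \end{pmatrix} = \det(A_1)\det(A_2) \neq 0$. No gaps; the paper's argument is just a terser version of yours.
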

\begin{proof}
	Let $X_i$ be a generic $G_i$-partial matrix
	that is minimally completable to full rank.
	Let $Y$ be the $(G_1+G_2)$-partial matrix whose entries
	corresponding to edges in $V_1\times V_2$ are all $0$,
	and whose entries corresponding to edges in $E_i$
	agree with $X_i$.
	Then the determinant of any completion of $Y$ is the
	product of the determinants of completions of $X_1$ and $X_2$,
	both of which must be nonzero.
\end{proof}

We now give an infinite family of full-rank typical graphs
that are \emph{minimal} in the sense that they cannot be built up
from smaller full-rank typical graphs
via the operations of adding edges, 
adding suspension vertices or taking joins (see Proposition~\ref{prop:completeLoopMinimal}).
Let $K_n^\circ$ denote the complete semisimple graph on $n$ vertices.
Given two graphs $G_1 = (V_1, E_1)$ and $G_2 = (V_2,E_2)$
on disjoint vertex sets $V_1$ and $V_2$,
let $G_1 \cup G_2$ denote their disjoint union.
That is, $G_1 \cup G_2 := (V_1 \cup V_2, E_1 \cup E_2)$.
Our infinite family of (minimally) full-rank typical graphs
is $\{K_n^\circ \cup K_1^\circ\}_{n=1}^\infty$.

\begin{prop}\label{prop:completeLoopTypical}
    The typical ranks of $K_n^\circ \cup K_1^\circ$ are $n$ and $n+1$.
\end{prop}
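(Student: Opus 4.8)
The plan is to work directly with the explicit block structure of the partial matrices. A $(K_n^\circ \cup K_1^\circ)$-partial matrix has the form
\[
M = \begin{pmatrix} A & ? \\ ?^T & a \end{pmatrix},
\]
where $A$ is a \emph{fully specified} $n\times n$ symmetric block (coming from $K_n^\circ$), $a$ is a specified diagonal entry (the loop of $K_1^\circ$), and the length-$n$ off-diagonal column together with its transpose is unspecified. A completion is therefore nothing more than a choice of vector $v$, giving $\tilde M = \begin{pmatrix} A & v \\ v^T & a \end{pmatrix}$.

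First I would pin down the only candidate ranks. For generic data $A$ is invertible, so every completion contains the rank-$n$ submatrix $A$ and hence has rank at least $n$; since $\tilde M$ is $(n+1)\times(n+1)$, every completion has rank at most $n+1$. Thus $n$ and $n+1$ are the only possible typical ranks. To see that $n$ is in fact the smallest, hence equal to $\sgcr$ by Proposition~\ref{prop:symBasicResults}(2), I would record the Schur-complement identity $\det \tilde M = \det(A)\,(a - v^T A^{-1} v)$ for invertible $A$, and note that over $\C$ the quadratic map $v \mapsto v^T A^{-1} v$ is surjective onto $\C$ for generic $A$, so the equation $v^T A^{-1} v = a$ always has a solution and rank $n$ is achieved generically.

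The heart of the argument is to exhibit open sets realizing each rank over $\R$. By the determinant identity, a rank-$n$ completion exists precisely when $a$ lies in the range of the real quadratic form $q(v) = v^T A^{-1} v$. When $A$ is nonsingular and \emph{indefinite}, this range is all of $\R$, and the nonsingular indefinite matrices form a nonempty open set; combined with any $a$, this shows $n$ is typical. When $A$ is \emph{positive definite} the range of $q$ is $[0,\infty)$, so if $A$ is positive definite and $a < 0$ — an open, nonempty condition — then $a - v^T A^{-1} v < 0$ for every real $v$ and $\det \tilde M \neq 0$ identically, forcing completion rank $n+1$. This exhibits $n+1$ as a typical rank.

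I expect the only genuinely delicate point to be this range analysis of the real quadratic form, together with the observation that definiteness versus indefiniteness is an open condition in the Euclidean topology; this is exactly what distinguishes the real case from the complex one, where $q$ is always surjective. Everything else reduces to the Schur-complement determinant. For completeness I would treat $n=1$ separately, where $A$ is a scalar and so never indefinite: there one uses that $A$ positive (resp. negative) definite together with $a$ of the matching (resp. opposite) sign still produces both ranks on open sets.
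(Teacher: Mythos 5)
Your proposal is correct, but it is organized around a different key step than the paper's proof. Both arguments compute $\sgcr(K_n^\circ \cup K_1^\circ) = n$ the same way: the fully specified $n\times n$ block gives the lower bound, and the existence of a complex zero of the determinant (in your case via surjectivity of the complex quadratic form $v \mapsto v^T A^{-1} v$) gives the upper bound. The difference is in how $n+1$ is shown to be typical. The paper exhibits a single explicit partial matrix ($A = I_n$, $a = -1$, remaining known entries $0$) whose determinant is $-1 - x_1^2 - \cdots - x_n^2$ for every real completion, and invokes the continuity-of-the-determinant principle stated before Proposition \ref{prop:allLoops}. You instead prove the full Schur-complement characterization: for invertible $A$, a rank-$n$ completion exists over $\R$ exactly when $a$ lies in the range of $v \mapsto v^T A^{-1} v$, so indefinite $A$ (with any $a$) yields an open set of completion rank $n$, while positive definite $A$ with $a < 0$ yields an open set of completion rank $n+1$. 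This is precisely the content of the paper's Lemma \ref{lem:completeLoopAlgebraicBoundary}, which the paper states and proves only \emph{after} the proposition (for use in Proposition \ref{prop:completeLoopMinimal}); you have effectively proved that lemma first and derived the proposition as a corollary. What your route buys: the open sets witnessing typicality are produced directly, so you never have to argue that a single partial matrix, all of whose completions over the noncompact space $\R^n$ of completions have nonzero determinant, forces a whole neighborhood of such partial matrices --- a point which, as literally stated in the paper's continuity principle, requires the extra uniformity (a bound away from zero, or your definiteness argument) that the paper's example happens to supply. What the paper's route buys is brevity and a completely concrete certificate. Your separate handling of $n=1$, where no indefinite $1\times 1$ block exists, is a correct and necessary detail in your formulation; the paper's example needs no such case split since it only uses positive definiteness of $A = I_n$ together with $a = -1 < 0$.
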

\begin{proof}{}
    Since $K_n^\circ \cup K_1^\circ$ has a fully specified $n\times n$ minor,
    its generic completion rank is at least $n$.
    The determinant of a generic $K_n^\circ \cup K_1^\circ$-partial matrix
    is a non-constant polynomial in the $n$ unknowns which must have a zero over the complex numbers.
    Hence the generic completion rank (which is the minmal typical rank) is $n$.

    To see that $n+1$ is also a typical completion rank,
    we construct an explicit example of a real partial $K_n^\circ \cup K_1^\circ$-matrix
    with determinant bounded away from zero for all real completions.
    Namely, let $X_n$ be the $K_n^\circ \cup K_1^\circ$-partial matrix
    whose $(i,i)$ entry is $1$ for $1 \le i \le n$, whose $(n+1,n+1)$ entry is $-1$,
    and whose other known entries are $0$.
    Denote the unknown entries of $X$ by $x_1,\dots,x_n$.
    Then the determinant of $X_n$ is $-1-x_1^2\dots-x_n^2$
    (this is perhaps easiest to see via cofactor expansion along the bottom row).
    Note that this polynomial is strictly negative for all real $x_1,\dots,x_n$.
\end{proof}

Now we want to show that $K_n^\circ \cup K_1^\circ$ is minimally full-rank typical
with respect to our three graph operations that preserve the property of being full-rank typical.
Note that since $K_n^\circ \cup K_1^\circ$ has an isolated vertex,
it cannot be constructed by adding a suspension vertex to a smaller graph,
nor via a graph join.
So we only need to show that no subgraph obtained by edge deletion is full-rank typical.
To establish this,
we need Lemma \ref{lem:completeLoopAlgebraicBoundary},
which gives us a convenient description
of the generic real $(K_n^\circ \cup K_1^\circ)$-partial matrices
that are completable only to full rank.

\begin{lem}\label{lem:completeLoopAlgebraicBoundary}
    Let $X$ be a generic real $K_n^\circ \cup K_1^\circ$-partial matrix.
    Let $A$ denote the fully specified $n\times n$ submatrix of $X$ corresponding to $K_n^\circ$
    and let $\lambda$ denote the diagonal entry corresponding to $K_1^\circ$.
    Then $X$ can only be completed to a full rank matrix if and only if
    $A$ is positive definite and $\lambda < 0$
    or $A$ is negative definite and $\lambda > 0$.
\end{lem}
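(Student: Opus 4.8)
We want to characterize when the generic real $K_n^\circ \cup K_1^\circ$-partial matrix $X$ completes only to full rank. Write the completed matrix in block form
\[
    \widehat{X} = \begin{pmatrix} A & b \\ b^T & \lambda \end{pmatrix},
\]
where $A$ is the fully specified $n\times n$ block, $\lambda$ is the specified scalar, and $b \in \R^n$ is the vector of $n$ unknown entries (the edges connecting the isolated-loop vertex to $K_n^\circ$ are absent, so $b$ is entirely free). The determinant is
\[
    \det \widehat{X} = \lambda \det A - b^T \operatorname{adj}(A)\, b.
\]
Since $X$ is generic, $A$ is invertible, so $\operatorname{adj}(A) = (\det A)\, A^{-1}$ and
\[
    \det \widehat{X} = (\det A)\left(\lambda - b^T A^{-1} b\right).
\]
\emph{First} I would observe that $X$ fails to have a rank-$n$ (or lower) completion if and only if $\det \widehat{X} \neq 0$ for every choice of $b \in \R^n$, i.e. the quadratic form $q(b) := \lambda - b^T A^{-1} b$ has no real zero.

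\textbf{Reducing to a definiteness condition on $A^{-1}$.}
The plan is to analyze when $q(b) = \lambda - b^T A^{-1} b$ is nowhere zero on $\R^n$. Since $A$ is invertible and generic, $A^{-1}$ is a symmetric matrix with no zero eigenvalue. If $A^{-1}$ is indefinite, then $b^T A^{-1} b$ takes both arbitrarily large positive and arbitrarily large negative values, so $q$ certainly has a zero regardless of $\lambda$; hence $X$ has a lower-rank completion. If $A^{-1}$ is positive definite, then $b^T A^{-1} b$ ranges over $[0,\infty)$, so $q(b) = 0$ is solvable precisely when $\lambda \ge 0$, and $q$ avoids zero exactly when $\lambda < 0$. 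Symmetrically, if $A^{-1}$ is negative definite, $q$ avoids zero exactly when $\lambda > 0$. \emph{Next} I would translate definiteness of $A^{-1}$ back to $A$: a symmetric invertible $A$ is positive (resp.\ negative) definite if and only if $A^{-1}$ is, which gives precisely the two cases in the statement. The genericity hypothesis is what lets me ignore the degenerate possibilities where $A$ or $A^{-1}$ is singular or where an eigenvalue vanishes.

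\textbf{Anticipated obstacle and how I'd handle it.}
The only subtle point is the boundary case $\lambda = 0$, where $q(b) = -b^T A^{-1} b$ vanishes at $b = 0$ even in the definite case, so strict inequalities ($\lambda < 0$ or $\lambda > 0$) are genuinely needed rather than $\le$ or $\ge$. But under the genericity assumption $\lambda \neq 0$, so this edge case does not actually arise for the matrices under consideration, and the clean biconditional stated in the lemma holds verbatim. I would make this explicit: among \emph{generic} real partial matrices, $A$ is nonsingular (indeed, has distinct nonzero eigenvalues) and $\lambda \neq 0$, so the three exhaustive and mutually exclusive cases are $A^{-1}$ indefinite, $A^{-1}$ definite with $\lambda$ of the ``wrong'' sign, and $A^{-1}$ definite with $\lambda$ of the ``right'' sign; only the last forces full rank. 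This completes the characterization, and I expect the whole argument to be short once the determinant is diagonalized into the product $(\det A)\,q(b)$.
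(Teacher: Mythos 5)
Your proof is correct and takes essentially the same approach as the paper: the paper invokes rank additivity of Schur complements to reduce the question to whether $\lambda$ lies in the range of the quadratic form attached to $A$, while you derive the equivalent Schur-complement determinant factorization $\det \widehat{X} = (\det A)\bigl(\lambda - b^T A^{-1} b\bigr)$ explicitly and then perform the same case analysis on definiteness. The only cosmetic difference is that you phrase the range condition in terms of $A^{-1}$ rather than $A$, which is immaterial since $A$ and $A^{-1}$ have the same definiteness type.
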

\begin{proof}
    By the rank additivity of Schur complements \cite[Section~0.9]{zhang}, the completion rank of $X$ is $n+1$
    if and only if $\lambda$ is not in the range of the quadratic form
    defined by $A$.
    Recall that the range of a quadratic form of a matrix $A$ is $\rr$ if and only if $A$ is indefinite,
    $\rr_{\ge 0}$ if and only if $A$ is positive semidefinite,
    and $\rr_{\le 0}$ if and only if $A$ is negative semidefinite.
\end{proof}

\begin{prop}\label{prop:completeLoopMinimal}
	No proper subgraph of $K_n^\circ \cup K_1^\circ$
	is full-rank typical.
\end{prop}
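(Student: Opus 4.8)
The plan is to reduce to deleting a single edge and then to split into a loop case and a non-loop case. By Proposition~\ref{prop:semisimpleAddEdges}, full-rank typicality on a fixed vertex set is preserved under adding edges; equivalently, the class of graphs that are \emph{not} full-rank typical is closed under deleting edges. Every proper subgraph $G'$ of $K_n^\circ \cup K_1^\circ$ is contained in $(K_n^\circ \cup K_1^\circ)\setminus e$ for some edge $e\notin G'$, so it suffices to show that $(K_n^\circ \cup K_1^\circ)\setminus e$ fails to be full-rank typical for every single edge $e$. If $e$ is a loop, then $(K_n^\circ \cup K_1^\circ)\setminus e$ no longer contains all loops, so Proposition~\ref{prop:allLoops} shows it is not full-rank typical. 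The only remaining case is $e=\{i,j\}$ a non-loop edge; since the isolated vertex contributes no non-loop edges, we have $i\neq j$ with $i,j\in[n]$.

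So fix such an edge and let $G' = (K_n^\circ \cup K_1^\circ)\setminus\{i,j\}$. I would show directly that a generic real $G'$-partial matrix has completion rank at most $n$, so that rank $n+1$ occurs only on a set with empty interior. Write a completion as
$$ M = \begin{pmatrix} A & b \\ b^T & \lambda \end{pmatrix}, $$
where $A$ is the symmetric $n\times n$ block, $\lambda$ is the specified diagonal entry of the looped isolated vertex, and $b$ is the vector of connecting entries. In $G'$ the data fixes every entry of $A$ except the symmetric pair in positions $(i,j),(j,i)$, while $b$ and the single entry $t := a_{ij}$ are free; thus $A = A_0 + t(E_{ij}+E_{ji})$ with $A_0$ fixed. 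The strategy, echoing Lemma~\ref{lem:completeLoopAlgebraicBoundary}, is to choose $t$ so that $A$ becomes \emph{indefinite} and invertible, and then to choose $b$ to kill the Schur complement: if $A$ is indefinite, then $A^{-1}$ is indefinite, so the quadratic form $b\mapsto b^T A^{-1} b$ is surjective onto $\R$ and we may pick $b$ with $b^T A^{-1} b = \lambda$. The Schur complement identity $\det M = \det(A)\,(\lambda - b^T A^{-1} b)$ then gives $\det M = 0$, a completion of rank at most $n$.

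The one point requiring care is producing such a $t$, and this is the main obstacle. Here I would use Rayleigh quotients at the vectors $e_i\pm e_j$: since $(e_i\pm e_j)^T(E_{ij}+E_{ji})(e_i\pm e_j) = \pm 2$, we get $(e_i+e_j)^T A (e_i+e_j) = c_+ + 2t$ and $(e_i-e_j)^T A (e_i-e_j) = c_- - 2t$ for constants $c_\pm$ depending only on $A_0$. For all sufficiently large $t$ the first is positive and the second negative, so $A$ has both a positive and a negative eigenvalue and is indefinite. Because $\det A_0\neq 0$ on a dense open set of the data, the polynomial $t\mapsto \det\bigl(A_0 + t(E_{ij}+E_{ji})\bigr)$ is not identically zero, so on the open ray of indefinite $t$ we may further avoid its finitely many roots, obtaining a $t$ for which $A$ is simultaneously indefinite and invertible. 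Thus on the dense open set $\{\det A_0\neq 0\}$ every $G'$-partial matrix has a rank-$\le n$ completion, so the locus of completion rank $n+1$ has empty interior and $G'$ is not full-rank typical. The delicate step is precisely this indefiniteness argument: a single symmetric off-diagonal parameter perturbs the spectrum by a rank-two, trace-zero matrix, and the witnesses $e_i\pm e_j$ are what make the sign change transparent.
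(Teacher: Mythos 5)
Your proof is correct and takes essentially the same route as the paper's: dispose of any missing loop via Proposition~\ref{prop:allLoops}, then use the unspecified off-diagonal entry in the $n\times n$ block to force that block to be indefinite and conclude via the Schur-complement characterization underlying Lemma~\ref{lem:completeLoopAlgebraicBoundary}. The only differences are cosmetic: the paper cites Lemma~\ref{lem:completeLoopAlgebraicBoundary} directly instead of re-deriving the Schur-complement step, while your explicit Rayleigh-quotient argument and the care taken to keep $A$ invertible fill in details the paper leaves to genericity.
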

\begin{proof}
    Let $G$ be a proper subgraph of $K_n^\circ \cup K_1^\circ$. 
    If $G$ lacks any loops, then Proposition \ref{prop:allLoops} implies that $G$
    is not full-rank typical, so assume $G$ has all loops.
    Then any $G$-partial matrix has some unspecified off-diagonal entry in 
    the upper-left block.
    Choose one such entry, and randomly specify all other unknown entries in the upper-left block.
    By setting this yet unspecified entry to be sufficiently large or sufficiently negative,
    we can ensure that the upper-left block is neither positive definite, nor negative definite.
    Lemma \ref{lem:completeLoopAlgebraicBoundary} then implies that a completion to rank $n$ exists.
\end{proof}

Given the results in this section,
one can construct many examples of full-rank typical
graphs by starting with an instance of $K_n^\circ \cup K_1^\circ$,
adding edges and looped suspension vertices,
and taking joins with other similarly constructed graphs.

\begin{pr}\label{prob:fullRankTypicalClassification}
Characterize semisimple graphs that are full-rank typical. 
\end{pr}

Problem \ref{prob:fullRankTypicalClassification} was recently solved by Lee and the first two authors \cite{fullRankTypical}, who showed that $G$ is full-rank typical if and only if the complement of $G$ is bipartite.

\subsection{Many typical ranks}\label{subsec:manyTypicalRanks}

Let $G_n := (K_1^\circ \cup K_1^\circ) + \dots + (K_1^\circ \cup K_1^\circ)$ denote
the graph obtained by taking the join of $n$ copies of $K_1^\circ \cup K_1^\circ$.
So $G_n$ is the complete $n$-partite graph on $n$ parts with two elements each and every loop. It has $2n$ vertices and $\binom{2n}{2}-n + 2n =2n^2$ edges (including the loops). 
It follows from Propositions \ref{prop:completeLoopTypical} and \ref{prop:semisimpleJoin}
that $G_n$ is full-rank typical - i.e. its maximum typical rank is $2n$.
The main result of this subsection is to show that the generic completion rank of $G_n$
is $2n - \left\lfloor\frac{1}{2}\left(\sqrt{1+8n}-1\right)\right\rfloor$,
thus establishing that $G_n$ exhibits many typical ranks
(Theorem \ref{thm:gngcr}).
Note that up to permutation of rows and columns,
the partial symmetric matrix corresponding to $G_n$ has all entries known aside
from the anti-diagonal.

\begin{prop}\label{prop:gcrOfHn}
    The generic completion rank of $G_{(k^2+k)/2}$ is $k^2$.
\end{prop}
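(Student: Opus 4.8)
The plan is to show that the dimension count of Proposition~\ref{prop:symBasicResults}(1) is sharp here. Write $n = (k^2+k)/2$, so that $G_n$ has $2n = k^2+k$ vertices and $\#E = 2n^2$ edges. Using Lemma~\ref{lem:symmetricMatrixRankVarieties}(b), I would first record the numerical coincidence
\[
    \dim \sym_{k^2}^{2n} \;=\; 2n\cdot k^2 - \binom{k^2}{2} \;=\; \frac{k^2(k+1)^2}{2} \;=\; 2n^2 \;=\; \#E .
\]
Since the difference $f(r+1)-f(r) = 2n-r$ of $f(r) := 2nr - \binom{r}{2}$ is positive for $r < 2n$, the function $f$ is strictly increasing up to $r = 2n$, and as $f(k^2) = \#E$, the smallest $r$ with $f(r)\ge \#E$ is exactly $k^2$. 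Thus Proposition~\ref{prop:symBasicResults}(1) gives $\sgcr(G_n)\ge k^2$. Because the source and target of $\pi_{G_n}\vert_{\sym_{k^2}^{2n}}$ now have the same dimension, it suffices to prove that this projection is dominant, equivalently (by the symmetric analogue of Remark~\ref{rem:differentialGCR}) that its differential at a generic rank-$k^2$ symmetric matrix $M$ is injective.

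By Lemma~\ref{lem:symmetricMatrixRankVarieties}(c), the tangent space $T_M\sym_{k^2}^{2n}$ is the space of symmetric $B$ with $x^T B x = 0$ for all $x\in\ker M$, and $\dim\ker M = 2n - k^2 = k$. A tangent vector lies in the kernel of the differential precisely when $\pi_{G_n}(B)=0$, i.e.\ when $B$ is supported on the unspecified anti-diagonal entries; such a $B$ is recorded by the $n$ scalars $b_p = B_{p,\,2n+1-p}$ for $p=1,\dots,n$. Setting $W := \ker M$ and polarizing, the required injectivity is exactly the invertibility of the linear map $\Phi_W$ sending $(b_1,\dots,b_n)$ to the symmetric bilinear form
\[
    \beta_B(x,y) \;=\; \sum_{p=1}^{n} b_p\big(x_p\,y_{2n+1-p} + x_{2n+1-p}\,y_p\big)
\]
restricted to $W\times W$. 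Here $\Phi_W$ is a \emph{square} system: its source has dimension $n$ and its target is the space of symmetric bilinear forms on the $k$-dimensional space $W$, of dimension $\binom{k+1}{2} = (k^2+k)/2 = n$. This equality is precisely why the statement is restricted to $n = (k^2+k)/2$.

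Invertibility of $\Phi_W$ is a Zariski-open condition on $W \in \mathrm{Gr}(k,2n)$, and as $M$ ranges over rank-$k^2$ symmetric matrices its kernel ranges over a dense subset of the Grassmannian, since Lemma~\ref{lem:symmetricMatrixRankVarieties}(a) realizes every $k$-plane as such a kernel. Hence it is enough to exhibit a single $W_0$ with $\Phi_{W_0}$ invertible. I would take $W_0$ spanned by \emph{palindromic} vectors: fix $v_1,\dots,v_n\in\C^k$ and let $x^{(a)}$ be the vector with $x^{(a)}_p = x^{(a)}_{2n+1-p} = (v_p)_a$. A direct computation then collapses $\Phi_{W_0}$ to the map $(b_p)\mapsto 2\sum_p b_p\, v_p v_p^{T}$ into the symmetric $k\times k$ matrices, so $\Phi_{W_0}$ is invertible if and only if $\{v_p v_p^{T}\}_{p=1}^{n}$ is a basis of that space. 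Choosing the $v_p$ to be the $\binom{k+1}{2} = n$ vectors $e_i$ $(1\le i\le k)$ together with $e_i + e_j$ $(1\le i<j\le k)$ gives such a basis, and the presence of $e_1,\dots,e_k$ among them forces $x^{(1)},\dots,x^{(k)}$ to be linearly independent, so $W_0$ is genuinely $k$-dimensional. This yields generic injectivity of the differential, hence $\sgcr(G_n)\le k^2$, and with the lower bound we conclude $\sgcr(G_n) = k^2$.

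The main obstacle is the construction in the third paragraph: one must reduce the opaque square system $\Phi_W$ to the transparent linear-algebra fact that $\binom{k+1}{2}$ suitable rank-one symmetric matrices span the symmetric $k\times k$ matrices. Two points require care, namely that exhibiting a single good $W_0$ suffices (which rests on invertibility being an open condition together with dominance of the kernel map), and that the chosen $W_0$ is honestly $k$-dimensional rather than degenerate.
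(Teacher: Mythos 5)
Your proof is correct and takes essentially the same route as the paper: the same dimension count gives the lower bound, and for the upper bound your palindromic subspace $W_0$ built from the vectors $e_i$ and $e_i+e_j$ is precisely the paper's subspace $L$ defined by $x_i = x_{i^*}$, $x_{ij} = x_{ij^*} = x_i + x_j$. Your reformulation of the key step as ``the rank-one matrices $v_p v_p^T$ form a basis of the symmetric $k\times k$ matrices'' is the same computation as the paper's invertible change of variables from the $b$'s to the $c$'s, just phrased more conceptually.
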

\begin{proof}
    A dimension count shows that $\sgcr(G_{(k^2+k)/2}) \ge k^2$ because the codimension of the variety of
    $k^2+k$ symmetric matrices of rank at most $k^2-1$ is $\binom{k+2}{2}$,
    which is greater than the dimension $\binom{k+1}{2}$ of the kernel of the projection $\pi_{G_{(k^2+k)/2}}$.
    To establish the reverse inequality
    we will show that the projection map from the tangent space of $\sym_{k^2}^{k^2+k}$
    at a particular point onto the non-anti-diagonal entries is one-to-one.
    By Lemma~\ref{lem:symmetricMatrixRankVarieties}, parts (a) and (c),
    it suffices to find a linear subspace $L \subseteq \cc^{k^2+k}$
    of dimension $k$ such that if a symmetric matrix $B$ defines a quadratic form that is identically zero on $L$
    and all non-anti-diagonal entries of $B$ are zero, then $B$ is zero.

    We index the coordinates of $\cc^{k^2+k}$ by two disjoint copies of
    $[k]\sqcup\binom{[k]}{2}$, which is possible because $2\binom{k+1}{2} = k^2+k$.
    We write $x_i,x_{ij},x_{i^*}$, and $x_{ij^*}$ for these coordinates, which we order essentially lexicographically as
	\[
    	x_1,\dots,x_k,x_{12},\dots,x_{(k-1)k},x_{1^*},\dots,x_{k^*},x_{12^*},\dots,x_{(k-1)k^*}
    \]
    Let $L$ be the subspace defined by $x_i = x_{i^*}$,
    $x_{ij} = x_{ij^*}$ and $x_{ij} = x_i + x_j$.
    Then $L$ is $k$-dimensional subspace of $\C^{k^2+k}$.
    Let $B$ be a symmetric matrix whose entries off the anti-diagonal are all zero.
    Then the quadratic form that $B$ defines with our labels is
    \[
    	x^TBx = \sum_i b_i x_i x_{i^*} + \sum_{ij} b_{ij}x_{ij}x_{ij^*}.
    \]
    Restricting to $L$, we have
    \[
    	x^TBx = \sum_{i} c_i x_i^2 + \sum_{ij} c_{ij} x_i x_j
    \]
    where $c_{ij} = 2b_{ij}$
    and $c_i = b_i + \sum_j b_{ij}$.
    Therefore $x^TBx = 0$ for all $x \in L$ implies that $c_i, c_{ij}$ are all zero.
    Since the map from the $b$'s to the $c$'s is invertible,
    this implies that the $b$s are all zero.
    So $B$ is the zero matrix.
\end{proof}

The general case will follow easily from the lemma below.
Its proof is very similar to that of its nonsymmetric analogue (Lemma~\ref{lem:projFullDim}).
\begin{lem}\label{lem:suspensionGCR}
	Let $G = ([n], E)$ be a semisimple graph such that the differential of the projection $\pi_G$ restricted to variety of symmetric matrices of rank at most $r$ is generically injective. Define $G' = ([n+1],E \cup [n]\times \{n+1\})$ by adding a
	suspension vertex and its corresponding loop edge.
	Then the following statements hold.
	\begin{enumerate}
	    \item The differential of the projection $\pi_{G'}$ restricted to the variety of $(n+1)\times(n+1)$ symmetric matrices of rank at most $r+1$ is generically injective.
	    \item If $G$ is maximal among all semisimple graphs on $n$ vertices of generic completion rank $r$
	    then $G'$ is maximal among all semisimple graphs on $n+1$ vertices of generic completion rank $r+1$.
	\end{enumerate}
\end{lem}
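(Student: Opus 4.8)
The plan is to imitate the proof of the non-symmetric analogue, Lemma~\ref{lem:projFullDim}, replacing the pair (column-span functionals, kernel vectors) by a single kernel together with the quadratic-form description of the tangent space from Lemma~\ref{lem:symmetricMatrixRankVarieties}(c). For part~(1), I would begin with a generic symmetric $M \in \sym_{r+1}^{n+1}$ and set $k = n-r$, so that $\ker M$ is a $k$-dimensional subspace of $\kk^{n+1}$; fix a basis $v_1,\dots,v_k$. Let $B \in T_M \sym_{r+1}^{n+1}$ satisfy $\pi_{G'}(B)=0$; the goal is $B=0$. By Lemma~\ref{lem:symmetricMatrixRankVarieties}(c) the tangent-space condition is $x^T B x = 0$ for all $x\in\ker M$, which—since $B$ is symmetric and we work over a field of characteristic $0$—is equivalent to $v_i^T B v_j = 0$ for all $i,j$. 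Because $n+1$ is a looped suspension vertex of $G'$, its entire last row and column are specified, so $\pi_{G'}(B)=0$ forces the last row and column of $B$ to vanish. Writing $B'$ for the upper-left $n\times n$ block and $v_i'$ for $v_i$ with its last coordinate deleted, the vanishing of the last row and column gives $v_i^T B v_j = (v_i')^T B' v_j'$, so the condition becomes $(v_i')^T B' v_j' = 0$ for all $i,j$, while $\pi_{G'}(B)=0$ restricts to $\pi_G(B')=0$ since $E\subseteq E'$.

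The heart of the argument—and the step I expect to be most delicate—is transferring genericity from $M$ to the $n\times n$ block. First, for generic $M$ the last column is nonzero, so $e_{n+1}\notin\ker M$ and the coordinate projection $\kk^{n+1}\to\kk^{n}$ deleting the last coordinate is injective on $\ker M$; hence (assuming $r\ge 1$, the case $r=0$ being immediate) the set $\{v_1',\dots,v_k'\}$ is linearly independent and spans a generic $k$-plane $L'\subseteq\kk^{n}$. By Lemma~\ref{lem:symmetricMatrixRankVarieties}(a) there is a symmetric $M'$ of rank $r$ with $\ker M' = L'$, and by Lemma~\ref{lem:symmetricMatrixRankVarieties}(c) the condition $(v_i')^T B' v_j'=0$ says precisely that $B'\in T_{M'}\sym_r^{n}$. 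The crucial observation is that, again by Lemma~\ref{lem:symmetricMatrixRankVarieties}(c), the tangent space $T_{M'}\sym_r^{n}$ depends only on $\ker M' = L'$; therefore whether $\pi_G$ is injective on this tangent space depends only on $L'$. As $M'$ ranges over the smooth locus its kernel sweeps out a dense set of $k$-planes, so the generic-injectivity hypothesis on $\dd\pi_G|_{\sym_r^{n}}$ translates into injectivity for a generic $k$-plane, and our $L'$ is such a plane. Thus $\pi_G(B')=0$ forces $B'=0$, and since the last row and column of $B$ already vanish, $B=0$, proving part~(1).

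For part~(2), I would run the same dimension count as in Lemma~\ref{lem:projFullDim}. Maximality of $G$ among semisimple graphs on $[n]$ with $\sgcr(G)=r$ forces $\dd\pi_G|_{\sym_r^{n}}$ to be a bijection: were it merely surjective but not injective, one would have $\#E < \dim\sym_r^{n}$ and could add an edge without raising the generic completion rank, contradicting maximality. Hence $\#E = \dim\sym_r^{n} = nr - \binom{r}{2}$ by Lemma~\ref{lem:symmetricMatrixRankVarieties}(b). Adding the looped suspension vertex contributes $n+1$ new edges (the $n$ edges $\{i,n+1\}$ and the loop), and a direct computation gives
\[
\dim\sym_{r+1}^{n+1} - \dim\sym_r^{n} = (n+1)(r+1) - \binom{r+1}{2} - nr + \binom{r}{2} = n+1 = \#E' - \#E.
\]
So $\#E' = \dim\sym_{r+1}^{n+1}$, and the injectivity from part~(1) upgrades to a bijection of equal-dimensional spaces; in particular $\dd\pi_{G'}|_{\sym_{r+1}^{n+1}}$ is surjective, giving $\sgcr(G')\le r+1$. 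Since $\dim\sym_r^{n+1} < \dim\sym_{r+1}^{n+1} = \#E'$, the projection from $\sym_r^{n+1}$ cannot be dominant onto $\kk^{E'}$, so $\sgcr(G') > r$ and hence $\sgcr(G')=r+1$. Finally, adjoining any further edge makes $\#E'' > \dim\sym_{r+1}^{n+1}$, which by the same dimension inequality forces the generic completion rank to exceed $r+1$; thus $G'$ is maximal, as required.
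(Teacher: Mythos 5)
Your proof is correct and follows essentially the same route as the paper's: polarize the tangent-space condition $x^TBx=0$ into $v_i^TBv_j=0$, use the fully specified last row and column to truncate, realize the truncated kernel as the kernel of a rank-$r$ symmetric matrix via Lemma~\ref{lem:symmetricMatrixRankVarieties}(a), invoke the injectivity hypothesis on $G$, and finish part (2) by the dimension count $\dim\sym_{r+1}^{n+1} = \dim\sym_r^n + n + 1 = \#E'$. You are in fact more explicit than the paper at the one delicate point, namely why genericity of $M$ entitles you to apply the generic-injectivity hypothesis at the constructed rank-$r$ matrix (your observation that the tangent space, and hence injectivity of $\dd\pi_G$ on it, depends only on the kernel plane, which is generic in the Grassmannian); the paper simply asserts that this matrix is generic.
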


\begin{proof}
    Let $M \in \sym_{r+1}^{n+1}$ be a generic $(n+1)$ symmetric matrix of rank $r+1$.
    Let $A \in T_M\sym_{r+1}^{n+1}$ be a tangent vector to the variety of symmetric matrices of rank at most $r+1$ at $M$ such that $\pi_{G'}(A) = 0$.
    We will show that this implies $A = 0$.
    Let $v_1,\dots,v_{n-r}$ be a generic basis of $\ker M$.
    By Lemma~\ref{lem:symmetricMatrixRankVarieties}(c),
    $A \in T_M\sym_{r+1}^{n+1}$ is equivalent to the condition that $v_i^T A v_j = 0$ for all $1\leq i,j \leq n-r$.
    Since the last row and column of a $G'$-partial matrix are completely specified by construction of $G'$,
    $\pi_{G'}(A) = 0$ implies that the last row and column of $A$ are zero.
    Let $v_i'$ denote the vector obtained by deleting the last entry from $v_i$
    and let $A'$ denote the symmetric matrix obtained by deleting the last row and column from $A$.
    Then
    $A \in T_M\sym_{r+1}^{n+1}$ is equivalent to $(v_i')^T A' v_j' = 0$ for all $1\leq i,j\leq n-r$.
    So $A' \in T_N \sym_r^n$ for some symmetric matrix $N$ whose kernel is spanned by $v_1',\dots,v_{n-r}'$
    which exists by Lemma~\ref{lem:symmetricMatrixRankVarieties}(a).
    Note that $\pi_G(A') = 0$.
    Since $N$ is a generic matrix of rank $r$, the differential of $\pi_G$ restricted to $\sym^n_r$ is injective on $T_N\sym^n_r$, which 
    shows $A' = 0$ and therefore $A = 0$.

    Again, the second statement follows from the first by a dimension count as in the proof of Lemma~\ref{lem:projFullDim}.
\end{proof}

\begin{thm}\label{thm:gngcr}
    Let $G_n$ be the graph obtained as a join of $n$ copies of $K_1^\circ \cup K_1^\circ$.
    The generic completion rank of $G_n$ is
    \[
    	2n - \left\lfloor\frac{1}{2}\left(\sqrt{1+8n}-1\right)\right\rfloor
    \]
    or equivalently, $\sgcr(G_n) = 2n-k$ where $k$ is the least integer such that $\binom{k+1}{2} \ge n$.
    Moreover, every integer between $\sgcr(G_n)$ and $2n$ is a typical rank of $G_n$.
\end{thm}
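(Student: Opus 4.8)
The plan is to sandwich $\sgcr(G_n)$ between a dimension-count lower bound and the generic completion rank of a well-chosen supergraph. Throughout, write $k=\lfloor\tfrac12(\sqrt{1+8n}-1)\rfloor$, equivalently the unique integer with $\binom{k+1}{2}\le n<\binom{k+2}{2}$, so that the claim is $\sgcr(G_n)=2n-k$. First I would record the lower bound $\sgcr(G_n)\ge 2n-k$. The graph $G_n$ has $\#E=2n^2$ edges, and a short computation shows that the smallest rank $\rho$ with $\dim\sym_\rho^{2n}=2n\rho-\binom{\rho}{2}\ge 2n^2$ is exactly $\rho=2n-k$: writing $\rho=2n-s$ turns this inequality into $n\ge\binom{s+1}{2}$, so the largest admissible $s$ is $k$. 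Proposition~\ref{prop:symBasicResults}(1) then gives $\sgcr(G_n)\ge 2n-k$, and the same substitution confirms the closed form $2n-k=2n-\lfloor\tfrac12(\sqrt{1+8n}-1)\rfloor$.

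For the matching upper bound I would \emph{not} attempt to build $G_n$ itself by suspensions -- the suspension operation always produces graphs with fewer unspecified entries than $G_n$ -- but instead build a supergraph $\widetilde G\supseteq G_n$ with $\sgcr(\widetilde G)=2n-k$ and then use monotonicity of completion rank under specifying more entries. Set $n_0=\binom{k+1}{2}\le n$ and start from $G_{n_0}$, whose projection $\pi_{G_{n_0}}$ restricted to $\sym_{k^2}^{2n_0}$ has generically injective differential by the proof of Proposition~\ref{prop:gcrOfHn}. Adding $2(n-n_0)$ looped suspension vertices one at a time and applying Lemma~\ref{lem:suspensionGCR}(1) that many times produces a graph $\widetilde G$ on $2n$ vertices whose differential, now restricted to rank $k^2+2(n-n_0)=2n-k$, is still generically injective. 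The bookkeeping to carry out is that $\widetilde G$ specifies every entry except the $n_0$ within-pair entries inherited from the base block, so $\#E(\widetilde G)=2n^2+n-n_0$, and one verifies $\dim\sym_{2n-k}^{2n}=2n^2+n-\binom{k+1}{2}=\#E(\widetilde G)$. Generic injectivity of the differential together with this dimension equality forces $\pi_{\widetilde G}$ to be dominant at rank $2n-k$, i.e.\ $\sgcr(\widetilde G)=2n-k$. Since the suspension vertices are mutually adjacent, $\widetilde G$ is obtained from $G_n$ precisely by filling in the $n-n_0$ non-base within-pair entries, whence $\widetilde G\supseteq G_n$ and $\sgcr(G_n)\le\sgcr(\widetilde G)=2n-k$.

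For the final assertion I would show the typical ranks fill the interval $[2n-k,2n]$. Proposition~\ref{prop:symBasicResults}(2) identifies the minimal typical rank with $\sgcr(G_n)=2n-k$. Each factor $K_1^\circ\cup K_1^\circ$ is full-rank typical (Proposition~\ref{prop:completeLoopTypical} with $n=1$ gives typical ranks $1,2$), so by Proposition~\ref{prop:semisimpleJoin} the $n$-fold join $G_n$ is full-rank typical, i.e.\ its maximal typical rank is $2n$. The interval property, Proposition~\ref{prop:symBasicResults}(3), then shows every integer between $2n-k$ and $2n$ is typical.

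The hard part will be the upper bound: the key realization is that one should pass to the supergraph $\widetilde G$ rather than hit $G_n$ on the nose, and then check that the rank reached by the suspension tower ($2n-k$), the edge count $\#E(\widetilde G)$, and $\dim\sym_{2n-k}^{2n}$ coincide, so that generic injectivity of the differential upgrades to dominance. The choice $n_0=\binom{k+1}{2}$ is exactly what makes this arithmetic close, and confirming $\#E(\widetilde G)=\dim\sym_{2n-k}^{2n}$ is the one computation I would perform with care.
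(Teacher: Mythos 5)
Your proposal is correct and takes essentially the same route as the paper: a dimension-count lower bound via Proposition~\ref{prop:symBasicResults}(1), an upper bound obtained by passing from the triangular case $G_{\binom{k+1}{2}}$ (Proposition~\ref{prop:gcrOfHn}) to a supergraph of $G_n$ built by adding $2\bigl(n-\binom{k+1}{2}\bigr)$ looped suspension vertices via Lemma~\ref{lem:suspensionGCR}, followed by monotonicity, with the ``moreover'' clause handled by Propositions~\ref{prop:completeLoopTypical}, \ref{prop:semisimpleJoin}, and \ref{prop:symBasicResults}(2)--(3). The only presentational difference is that you iterate Lemma~\ref{lem:suspensionGCR}(1) and close the argument with an explicit edge-count versus $\dim\sym_{2n-k}^{2n}$ comparison to get dominance, whereas the paper invokes the maximality statement of Lemma~\ref{lem:suspensionGCR}(2), which is itself established by the same dimension count.
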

\begin{proof}
    The ``moreover'' clause follows from
    Propositions~\ref{prop:symBasicResults}, \ref{prop:semisimpleJoin}, and \ref{prop:completeLoopTypical}.
    The case $n = (k^2 + k)/2$ is Proposition~\ref{prop:gcrOfHn}.
    So assume $n = (k^2+k)/2 + a$ with $1 \le a \le k$.
    Note that the proposed value for $\sgcr(G_n)$ is a lower bound by a dimension count (Lemma~\ref{lem:symmetricMatrixRankVarieties}(b)).
    We show that it is also an upper bound.
    Let $G_n'$ denote the graph obtained by adding $2a$ suspension vertices (with loops) to $G_n$.
    Since $G_{(k^2+k)/2}$ is maximal of generic completion rank $k^2$,
    Lemma \ref{lem:suspensionGCR} implies that $G_n'$ is maximal of generic completion rank $k^2+2a$.
    Since $\sgcr(G_n) \le \sgcr(G_n')$, we now have $\sgcr(G_n) \le k^2 + 2a$.
    Plugging in $n = (k^2+k)/2 + a$ to the proposed expression for $\sgcr(G_n)$ gives
    \[
    	k^2 + k + 2a - \left\lfloor \frac{1}{2}\left(\sqrt{(2k+1)^2 + 8a} - 1\right) \right\rfloor.
    \]
    Note that the expression inside the $\lfloor \cdot \rfloor$ is $k$ for all $1 \le a \le k$.
    Hence the proposed value for $\sgcr(G_n)$ is equal to $k^2 + 2a$.
\end{proof}

\begin{cor}\label{cor:typrankssym}
	The graph $G_n$ exhibits $1+\left\lfloor\frac{1}{2}\left(\sqrt{1+8n}-1\right)\right\rfloor$ typical ranks.
\end{cor}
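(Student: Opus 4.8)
The plan is to read off the number of typical ranks directly from Theorem \ref{thm:gngcr}, which already pins down both endpoints of the range of typical ranks together with the fact that no integer in between is skipped. First I would recall that by Proposition \ref{prop:symBasicResults}(2) the generic completion rank $\sgcr(G_n)$ coincides with the smallest typical rank of $G_n$, and that since $G_n$ is full-rank typical (established from Propositions \ref{prop:completeLoopTypical} and \ref{prop:semisimpleJoin}), the value $2n$ is its largest typical rank. Thus the set of typical ranks of $G_n$ is contained in the integer interval $[\sgcr(G_n), 2n]$.

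The second step is to rule out gaps: the ``moreover'' clause of Theorem \ref{thm:gngcr} asserts that every integer between $\sgcr(G_n)$ and $2n$ is in fact a typical rank, so the set of typical ranks is exactly this interval. (This is where the interval property of Proposition \ref{prop:symBasicResults}(3) does the work.) Consequently the number of typical ranks equals the number of integers in $[\sgcr(G_n), 2n]$, namely $2n - \sgcr(G_n) + 1$.

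Finally I would substitute the formula $\sgcr(G_n) = 2n - \lfloor \tfrac{1}{2}(\sqrt{1+8n}-1)\rfloor$ supplied by Theorem \ref{thm:gngcr}, which gives $2n - \sgcr(G_n) = \lfloor \tfrac{1}{2}(\sqrt{1+8n}-1)\rfloor$ and hence $2n - \sgcr(G_n) + 1 = 1 + \lfloor \tfrac{1}{2}(\sqrt{1+8n}-1)\rfloor$, as claimed. I do not expect any genuine obstacle here: the corollary is a purely bookkeeping consequence of the theorem, and all the real content---the exact value of the generic completion rank and the absence of gaps in the typical range---has already been carried out upstream.
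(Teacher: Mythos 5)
Your proposal is correct and matches the paper's (implicit) argument exactly: the corollary is stated without proof precisely because it is the bookkeeping consequence of Theorem \ref{thm:gngcr} that you describe, namely that the typical ranks form the integer interval from $\sgcr(G_n)$ to $2n$, which has $2n-\sgcr(G_n)+1 = 1+\left\lfloor\frac{1}{2}\left(\sqrt{1+8n}-1\right)\right\rfloor$ elements.
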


\end{document}